\newtheorem{theorem}{Theorem}[section]
\newtheorem{proposition}{Proposition}[section]
\newtheorem{lemma}[theorem]{Lemma}
\newtheorem{remark}{Remark}[section]
\def\eq{\begin{equation}}
\def\qe{\end{equation}}
\def\1{\bm{1}}
\def\ve{{\bm{e}}}
\def\vf{{\bm{f}}}
\def\vg{{\bm{g}}}
\def\vh{{\bm{h}}}
\def\vk{{\bm{k}}}
\def\vs{{\bm{s}}}
\def\vt{{\bm{t}}}
\def\vu{{\bm{u}}}
\def\vv{{\bm{v}}}
\def\vy{{\bm{y}}}
\def\mD{{\bm{D}}}
\def\mJ{{\bm{J}}}
\def\mX{{\bm{X}}}
\DeclareMathAlphabet{\mathsfit}{\encodingdefault}{\sfdefault}{m}{sl}
\SetMathAlphabet{\mathsfit}{bold}{\encodingdefault}{\sfdefault}{bx}{n}
\newcommand{\R}{\mathds{R}}
\numberwithin{equation}{section}
\definecolor{burgundy}{rgb}{0.5, 0.0, 0.13}
\definecolor{camel}{rgb}{0.76, 0.6, 0.42}
\definecolor{chamoisee}{rgb}{0.63, 0.47, 0.35}
\definecolor{grey1}{RGB}{128,128,128}
\newcommand{\cX}{\mathcal X}
\newcommand{\bbE}{\mathds E}
\newcommand{\bbH}{\mathds H}
\newcommand{\cD}{\mathcal D}
\newcommand{\cC}{\mathcal C}
\newcommand{\cM}{\mathcal M}
\newcommand{\bbR}{\mathds R}
\newcommand{\measet}{\cM(\cX)}
\newcommand{\ii}{\mathfrak{i}}
\newcommand{\mus}{\mu^{\star}}
\newcommand{\nuk}{\nu_k}
\newcommand{\nukp}{\nu_{k+1}}
\newcommand{\nuke}{\nu^{\varepsilon}_k}
\newcommand{\nukpe}{\nu^{\varepsilon}_{k+1}}
\newcommand{\pos}{\mathds{T}}
\newcommand{\weights}{\mathds{W}}
\newcommand{\Kernel}{\mathds{K}}
\newcommand{\rad}{\cX}
\DeclareMathOperator*{\esssup}{ess\,sup}
\DeclareMathOperator*{\essinf}{ess\,inf}
\title{FastPart: Over-Parametrized Stochastic Gradient Descent\\ for Sparse optimization on Measures}
\author[1,4]{Yohann De Castro}
\author[2,4]{, Sébastien Gadat}
\author[3]{ and Clément Marteau}
\affil[1]{{École Centrale de Lyon, CNRS UMR 5208, Institut Camille Jordan, Écully, France.}}
\affil[2]{Université Toulouse 1 Capitole, Toulouse School of Economics, France. } 
\affil[3]{Univ. Claude Bernard, CNRS UMR 5208, Institut Camille Jordan, Villeurbane, France.}
\affil[4]{Institut Universitaire de France (IUF)}
\date{version as of \today}
\begin{document}
\maketitle

\noindent Corresponding Author: Sébastien Gadat\\
\noindent  Affiliation: Toulouse School of Economics\\
\noindent E-mail Address: sebastien.gadat@tse-fr.eu

\begin{abstract}
    This paper presents a novel algorithm that leverages Stochastic Gradient Descent strategies in conjunction with Random Features to augment the scalability of Conic Particle Gradient Descent (CPGD) specifically tailored for solving sparse optimization problems on measures. By formulating the CPGD steps within a variational framework, we provide rigorous mathematical proofs demonstrating the following key findings: $\mathrm{(i)}$ The total variation norms of the solution measures along the descent trajectory remain bounded, ensuring stability and preventing undesirable divergence; $\mathrm{(ii)}$ We establish a global convergence guarantee with a convergence rate of $\mathcal{O}(\log(K)/\sqrt{K})$ over $K$ iterations, showcasing the efficiency and effectiveness of our algorithm, $\mathrm{(iii)}$ Additionally, we analyse and establish local control over the first-order condition discrepancy, contributing to a deeper understanding of the algorithm's behaviour and reliability in practical applications.
\end{abstract}

\section{Introduction}

\subsection{Convex programming for sparse optimization on measures}

Convex optimization on the space of measures has gained attention during the past decade, {\it e.g.,} \cite{bach2021gradient,chizat2018global,chizat2022sparse,de2021supermix,poon2021geometry,de2012exact,candes2014towards} and references therein. 
It is also a popular field of investigation to derive global optimization methods (a.k.a. simulated annealing) through the embedding of~$\mathds{R}^d$ into the space of measures \citep{BolteMiclo,MicloSolo}.

\medskip

At his core, it can be viewed as a fruitful way of expressing many non-convex signal processing and machine learning tasks into a convex one, where one searches for an element of a Hilbert space $\bbH$ that can be described as a linear combination of a few, say $\bar s$, elements: 
\begin{equation}
    \label{eq:sparse solution}
    \bar\vy = \sum_{j=1}^{\bar s}\bar \omega_j\varphi_{\bar \vt_j}\,,
\end{equation}
from a given parameterized set $\big\{\varphi_\vt\,:\,\vt\in\cX\big\}$ where $\bar \omega_j\in\bbR\setminus\{0\}$ and $\cX$ is a {closed compact and convex} set of $\bbR^d$.  {From now on, and throughout the remainder of this paper, we shall assume without loss of generality that $\cX = \bar B(0,R_{\mathcal X})$, the closed centered ball in $\mathbb{R}^d$ with some radius $R_{\mathcal X}>0$. }

Given an empirical observation $\vy$, we would like to find a sparse representation, akin to~\eqref{eq:sparse solution}, that explains~$\vy$ and for which the learnt parameters $(\omega_j,\vt_j)_{j=1}^s$ encode an output solution for which generalization properties can be proven. A common practice is to minimize: 
\begin{equation}
    \label{eq:mse}
    (\omega_j,\vt_j)_{j=1}^s\mapsto \Big\|\vy - \sum_{j=1}^{s}\omega_j\varphi_{\vt_j}\Big\|^2_\bbH\,,
\end{equation}
which is a non-convex program. In the expression \eqref{eq:mse} above, $s\geq 1$ is a tuning parameter quantifying the so called {\it sparsity} of the solution. 

\medskip

A substantial body of literature pertains to the minimization of Mean Squared Error (MSE) and aligns with the framework outlined herein. In this paper, we will expound upon our methodology in a generalized format that can effectively encompass a majority of fields. Notably, certain specific instances will be elaborated upon within this paper, including but not limited to sparse de-convolution \citep{de2012exact,candes2014towards}, infinitely wide neural networks \citep{bach2021gradient,chizat2018global}, or Mixture Models \citep{de2021supermix}. Our approach is deployed according to the following steps. 

\paragraph{Lifting on the space of signed measures}
First, we lift Program \eqref{eq:mse} onto the space $(\measet,\|\cdot\|_{\mathrm{TV}})$ of Radon measures with finite total variation norm on $\cX$, defined a the dual space of the continuous functions $(\mathcal C(\mathcal X),\|\cdot\|_{\mathrm{\infty}})$. Consider the positive definite kernel~$\Kernel$ defined as the dot product~$\Kernel(\vt,\vt'):=\langle \varphi_\vt,\varphi_{\vt'}\rangle_{\bbH}$ for all $t,t'\in \cX$, and assume that it is continuous:

\medskip

\noindent
\textbf{Assumption \eqref{ass:kernel_continuity}.} \textit{The feature map function
\begin{equation}
    \label{ass:kernel_continuity}
    \tag{$\mathbf{A_{\mathrm{C}}}$}
    \vt\in\cX\mapsto\varphi_\vt\in\bbH
\end{equation} 
is continuous.
}

\medskip

\noindent
Consider the {\it kernel measure embedding} (we refer to Appendix~\ref{app:kme} for further details),
 \begin{equation}
 \label{def:Phi}
 \Phi:\ \nu\in\measet\mapsto\int_\cX\varphi_\vt\mathrm d\nu(\vt) \in\bbH\,.
 \end{equation}
It is proven in the appendix (see Lemma~\ref{lem:KME}) that $\Phi$ is a bounded linear map under \eqref{ass:kernel_continuity}. We deduce that: 
\begin{equation}
    \label{eq:lifting}
    \nu\in\measet\mapsto \big\|\vy - \Phi(\nu)\big\|^2_\bbH\,,
\end{equation}
is a convex function on $\measet$. Taking the set of discrete measures given by $\nu=\sum_{j=1}^{s}\omega_j\delta_{\vt_j}$, where~$\delta_\vt$ is the Dirac mass at point $\vt$, we uncover the parametrization~\eqref{eq:mse}. Hence, we have lifted a non-convex program on $(\omega_j,\vt_j)_{j=1}^s$ onto a convex program over a much larger space, the space of signed measures.

\paragraph{Total variation norm regularization}
The second step is regularization. One key parameter is $s$, the number of learnt parameters, that should be considered to estimate (an approximation of) the true function~\eqref{eq:sparse solution}. In practice, it can be cumbersome to tune this parameter and it might be better to resort to regularization. One benefit of the lifting on the space of measures is that this can be simply done by the $\mathrm{TV}$-norm. Inspired by $L^1$-regularization in (high-dimensional) inverse problems, we study the so-called {\it Beurling LASSO} \citep{de2012exact,candes2014towards} referred to as BLASSO below, whose convex objective function is given by:
\begin{equation}
    \label{def:J}
    J(\nu) := \frac{1}{2} \big\| \vy-\Phi(\nu)\big\|_\bbH^2 +  \lambda \|\nu\|_{\mathrm{TV}}\,,
\end{equation} 
where $\lambda>0$ is a tuning parameter. We denote by $\mu^\star\in\measet$ a solution to BLASSO 
\begin{equation}
\tag{$\mathcal B$}
\label{def:Blasso}
    J(\mu^\star) = \min_{\mu \in \measet } J(\mu)\,.
\end{equation}
{
The existence of a  solution to the problem at hand is not manifestly evident. Seminal contributions in the field, as articulated in \citep[Proposition 3.1]{Bredis_Pikkarainen_13} and \citep[Theorem~3.1]{hofmann2007convergence}, have shown the existence of solutions upon continuity prerequisites imposed on the operator~$\Phi$ or its pre-dual counterpart. Nevertheless, the arduousness associated with ascertaining the continuity and well-defined attributes of the operator $\Phi$ as expounded in Equation \eqref{def:Phi}, within a given framework, underscores the complexity involved. In contrast, Condition~\eqref{ass:kernel_continuity} affords a more tractable means of validation. We present a result displayed in Theorem \ref{theo:mu_star} below (established in Appendix~\ref{app:thm_existence}) that demonstrates the existence of solutions for any convex optimization problem formulated in the manner of Equation~\eqref{eq:general_min}, subject to the condition of continuity stipulated in Equation~\eqref{ass:kernel_continuity}. This result is aligned with a nice recent work \citep[Proposition 2.3]{bredies2024asymptotic} for general regularization terms which assume sequentially weak*-to-strong continuity of $\Phi$. As shown in the proof (see Appendix~\ref{app:thm_existence}), we establish weak*-to-weak continuity for $\Phi$ when using the $TV$-norm regularization.}

\pagebreak[3]

\begin{theorem}\label{theo:mu_star}
Let $\bbH$ be separable Hilbert space and let $\cX$ be compact metric space. Consider the problem
\begin{equation}
    \label{eq:general_min}
    \inf_{\mu\in\measet}\Big\{L(\Phi\mu)+\lambda\|\mu\|_{\mathrm{TV}}\Big\}
\end{equation}
where $L\,:\,\vh\in\bbH\to L(\vh)\in [0,\infty]$ is convex and lower semi-continuous. If \eqref{ass:kernel_continuity} holds then there exists a measure $\mu^\star\in\measet$ solution to \eqref{eq:general_min}. Furthermore, if $L$ is strictly convex, then the vector $\Phi(\mu^\star)\in\bbH$ is unique $($it does not depend on the choice of the solution $\mu^\star)$.
\end{theorem}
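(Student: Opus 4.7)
The plan is to apply the direct method of the calculus of variations: take a minimizing sequence, extract a weak-$*$ convergent subsequence, and establish weak-$*$ lower semi-continuity of the objective. Write $J(\mu):=L(\Phi\mu)+\lambda\|\mu\|_{\mathrm{TV}}$ and let $(\mu_n)\subset\measet$ satisfy $J(\mu_n)\to \inf J$. Since $L\geq 0$ and $\lambda>0$, the sequence $\|\mu_n\|_{\mathrm{TV}}$ is bounded by some $M<\infty$. I will also need $\inf J<\infty$, which holds because the zero measure gives $J(0)=L(0)<\infty$ once one observes $\Phi(0)=0$; if $L(0)=\infty$ one simply picks any $\nu$ with $L(\Phi\nu)<\infty$, which exists as soon as the infimum is finite (otherwise the theorem is vacuous or any measure is a solution).

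Compactness will come from Banach--Alaoglu: since $\cX$ is a compact metric space, $\mathcal{C}(\cX)$ is separable, so the closed ball $\{\mu:\|\mu\|_{\mathrm{TV}}\leq M\}$ in its dual $\measet$ is weak-$*$ sequentially compact (equivalently, metrizable). Passing to a subsequence (not relabeled), $\mu_n\rightharpoonup^* \mu^\star$ for some $\mu^\star\in\measet$ with $\|\mu^\star\|_{\mathrm{TV}}\leq \liminf_n \|\mu_n\|_{\mathrm{TV}}$ by the standard weak-$*$ lower semi-continuity of a dual norm.

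The key step is lower semi-continuity of $\mu\mapsto L(\Phi\mu)$ along $(\mu_n)$. Here Assumption \eqref{ass:kernel_continuity} is exactly what is needed: for every fixed $\vh\in\bbH$ the scalar map $\vt\mapsto\langle\varphi_\vt,\vh\rangle_\bbH$ is continuous on the compact set $\cX$, hence
\[
\langle \Phi(\mu_n),\vh\rangle_\bbH=\int_\cX\langle\varphi_\vt,\vh\rangle\,\mathrm d\mu_n(\vt)\longrightarrow \int_\cX\langle\varphi_\vt,\vh\rangle\,\mathrm d\mu^\star(\vt)=\langle\Phi(\mu^\star),\vh\rangle_\bbH\,,
\]
so $\Phi(\mu_n)\rightharpoonup \Phi(\mu^\star)$ weakly in $\bbH$. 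Because $L$ is convex and lower semi-continuous, it is weakly lower semi-continuous (Mazur's lemma), which yields $L(\Phi\mu^\star)\leq\liminf_n L(\Phi\mu_n)$. Adding the two lower-semi-continuity inequalities gives $J(\mu^\star)\leq\liminf_n J(\mu_n)=\inf J$, and $\mu^\star$ is a minimizer. I expect this weak-$*$-to-weak continuity of $\Phi$ to be the heart of the argument, as it is the only place where \eqref{ass:kernel_continuity} is used and it also underlies the existence claim in the appendix (Lemma~\ref{lem:KME}).

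For uniqueness of $\Phi(\mu^\star)$ under strict convexity of $L$, suppose $\mu_1^\star,\mu_2^\star$ are two solutions with $\Phi(\mu_1^\star)\neq \Phi(\mu_2^\star)$ and set $\bar\mu=\tfrac12(\mu_1^\star+\mu_2^\star)$. By linearity of $\Phi$, strict convexity of $L$, and the triangle inequality for $\|\cdot\|_{\mathrm{TV}}$,
\[
J(\bar\mu)<\tfrac12 L(\Phi\mu_1^\star)+\tfrac12 L(\Phi\mu_2^\star)+\lambda\cdot\tfrac12\bigl(\|\mu_1^\star\|_{\mathrm{TV}}+\|\mu_2^\star\|_{\mathrm{TV}}\bigr)=\tfrac12 J(\mu_1^\star)+\tfrac12 J(\mu_2^\star)=\inf J,
\]
which contradicts the optimality of $\mu_1^\star,\mu_2^\star$. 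Hence $\Phi(\mu^\star)$ is independent of the chosen minimizer.
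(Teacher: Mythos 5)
Your proof is correct and follows essentially the same route as the paper's: a minimizing sequence with bounded $\mathrm{TV}$-norm, weak-$*$ compactness via Banach--Alaoglu, weak-$*$ lower semi-continuity of the $\mathrm{TV}$-norm, weak-$*$-to-weak continuity of $\Phi$ through the duality identity of Lemma~\ref{lem:KME}, and weak lower semi-continuity of the convex l.s.c.\ function $L$. The only difference is cosmetic — you spell out the strict-convexity contradiction for uniqueness and the separability of $\mathcal{C}(\cX)$ justifying sequential compactness, which the paper leaves implicit.
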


\begin{remark}
   By choosing  $L(\vh)=(1/2)\|\vy-\vh\|_\bbH^2$ in Theorem~\ref{theo:mu_star}, it is established that there exists a signed measure~$\mu^\star\in\measet$ solution to BLASSO~\eqref{def:Blasso}.  
\end{remark}

Over the last decade, several investigations on the performances of the estimator associated to the solution of (\ref{eq:general_min}) have been proposed in several specific situations. This solution can be proven to be close, for some partial Wasserstein $2$ distance \citep{poon2021geometry}, to the target measure $\bar \mu=\sum_{j=1}^{\bar s}\bar \omega_j\delta_{\bar t_j}$ involved in Equation \eqref{eq:sparse solution} in some cases of interest $(${\it e.g.,} Mixture Models \citep{de2021supermix} or sparse de-convolution \citep{poon2021geometry,de2012exact}$\,)$ as soon as the support points ${\bar t_j}$ of the target $\bar \mu$ are sufficiently separated.  Moreover, if the bounded linear map $\Phi$ has  finite rank $m\geq 1$, then there exists a solution to \eqref{def:Blasso} with at most $m$ atoms, as proven by \cite[Section 4]{boyer2019representer}. 

\subsection{Conic Particle Gradient Descent (CPGD)}
Solving \eqref{def:Blasso} from a practical point of view is not an immediate task, due to the infinite dimensional nature of the target. In this context, the Sliding Franck-Wolfe algorithm (see, e.g., \cite{denoyelle2019sliding}) provides an answer to this question. In this paper, we focus instead on the convergence of a Stochastic and Random Feature version of the Conic Particle Gradient Descent (CPGD)  \citep{chizat2022sparse} towards a minimum of Program~\eqref{def:Blasso}.

\medskip

Writing the weights $\weights:=(\omega_1,\ldots,\omega_p)$ and the positions $\pos:=(\vt_1,\ldots,\vt_p)\in\cX^p$, we consider a generic measure with $p$ weighted particles by:
\begin{equation}\label{def:nu}
    \nu(\weights,\pos) := \sum_{j=1}^p \varepsilon_j\omega_j \delta_{\vt_j},\,
\end{equation}
where $\omega_j>0$ (resp. $\varepsilon_j=\pm1$) refers to the weight (resp. the sign) of the particle $j$. The signs are fixed along the descent while the positions $\pos$ and weights $\weights$ are updated at each gradient step. By a symmetrization argument, see for instance \cite[Appendix A]{chizat2022sparse}, we consider, without loss of generality, that~$\varepsilon=1$. It holds that minimizing \eqref{def:Blasso} or minimizing  $J$ defined by:
\begin{equation}
    \label{def:Blasso+}
    \tag{$\mathcal B_+$}
    J(\mu^\star) = \min_{\mu \in \measet_+ } J(\mu)\,.
\end{equation}
are equivalent, in a sense made precise by \cite[Proposition A.1]{chizat2022sparse} for instance; where $\measet_+$ is the set of non-negative measures with finite $\mathrm{TV}$-norm. The attentive reader can uncover the next results on $\measet$ by replacing $\omega_j$ by $\varepsilon_j\omega_j$. The gradient descent dynamics are the same and our results also holds in this latter case. 

Our algorithm makes use of particles measures as a proxy for solving the problem \eqref{def:Blasso}. To this end, we adapt the notation of objective functions and related quantities accordingly. Denoting $\bm\lambda:=(\lambda,\ldots,\lambda)$, the definition of $\nu(\weights,\pos)$ in \eqref{def:nu} then yields
\begin{align*}
J(\nu(\weights,\pos)) 
    =  \frac{1}{2} \Big\| \vy - \sum_{j=1}^p \omega_j \varphi_{\vt_j} \Big\|_\bbH^2 + \lambda \sum_{j=1}^p \omega_j
    :=  F(\weights,\pos)+\frac{1}{2}\| \vy \|_\bbH^2\,, 
\end{align*}
where $\vk_{\pos}:=(\langle \vy,\varphi_{\vt_1}\rangle_{\bbH},\ldots,\langle \vy,\varphi_{\vt_p}\rangle_{\bbH})\in\bbR^p$, $\Kernel_\pos$ is a $(p\times p)$ matrix with entries~$\Kernel(\vt_i,\vt_j)$ defined by:
\begin{equation}
    \label{def:kernel}
    \Kernel(\vt_i,\vt_j) = \langle \varphi_{\vt_i},\varphi_{\vt_j} \rangle_{\bbH},
\end{equation}
and 
\begin{equation}
\label{def:F}
    F(\weights,\pos)
    :=\langle \bm\lambda-\vk_{\pos}, \weights \rangle
    + \frac{1}{2}\weights^T \Kernel_\pos \weights\,,
\end{equation}
is equal to $J(\nu(\weights,\pos)) $ up to the additive constant term $(1/2)\| \vy \|_\bbH^2$ (that only depends on the observations and not on the parameters of the measure we are optimizing on). 

\medskip

In contrast to the original problem presented in \eqref{def:Blasso}, the optimization process now operates within a different domain. Instead of working within the space of measures, it focuses on particle measures with a set of fixed size $p$. This shift in perspective involves optimizing over both the positions~$\pos$ and weights~$\weights$. Although this adjustment serves to simplify the model's complexity to some extent, it introduces certain computational challenges. Firstly, for each pair of parameters $(\weights,\pos)$, the computation of $F(\weights,\pos)$ necessitates the evaluation of $\vk_\pos$ and $\Kernel_\pos$. Depending on the structure of the Hilbert space $\mathds{H}$ and the associated scalar products, this computation can be time-consuming. The need to calculate these quantities at each iteration of a gradient descent algorithm can become problematic, especially when dealing with high-dimensional spaces ($d$ being significant). Furthermore, considering a large number of particles during the optimization process, which is often essential, can substantially escalate the computational burden. This computational overhead needs to be carefully managed and optimized to ensure the efficiency of the optimization procedure. In this paper, we address these issues by presenting a novel algorithm that incorporates Stochastic Gradient Descent (SGD) iterations. To the best of our knowledge, this approach has not been previously explored in the context of sparse optimization within the domain of measures. We rigorously examine the properties of this algorithm, conducting a comprehensive investigation from both theoretical and practical perspectives.  \\


{The paper is organized as follows. Sections \ref{s:algo_sto} and \ref{s:mainresults} present our stochastic algorithm and our main convergence results. Section \ref{s:algo} then describes the rationale behind the construction of the algorithm. Section \ref{s:examples} is illustrated by the example of mixture models, an important topic in unsupervised learning. A numerical illustration on some toy examples are discussed in Section \ref{sec:experiments}. Proofs and related technical results are gathered in Section \ref{sec:proofs} and in Appendices~\ref{app:technical}, \ref{sec:gradients} and \ref{app:tech_tools}. }

\subsection{Stochastic \& Random Feature CPGD (\texttt{FastPart})}
\label{s:algo_sto}
Iterative algorithms solving~\eqref{def:Blasso+} have already been at the core of theoretical investigations. We refer for instance to \cite{chizat2022sparse} among others. The latter investigates an algorithm that requires some frequent calls to the gradient $\nabla F$ of the objective $F$ defined in~\eqref{def:F}. This gradient can be related to the Fréchet differential function $\vt\mapsto J'_\nu(\vt)$ of $J(.)$ at point $\nu\in \mathcal{M}(\cX)_+$ and its gradient~$\nabla_\vt J'_\nu$. The Fréchet differential~$J'_\nu$ is defined through the following first order Taylor expansion:
\begin{equation}
    \label{def:Frechet_diff}
    \forall \nu \in \measet_+, \quad \nu+\sigma\in\measet_+ \qquad 
J(\nu+\sigma)-J(\nu) = \langle J'_\nu,\sigma \rangle_{\measet^\star,\measet}
    +q(\sigma)\,, 
\end{equation}
where $J'_{\nu}$ is the Fr\'echet gradient  and $q$ is a second order term.
 We refer to Proposition \ref{prop:jnuprime} for details.
According to Proposition~\ref{prop:grad_F}, for any $\vt\in \cX$, we have by Equation \eqref{eq:jnu1} that, given $\nu = \sum_{j=1}^p \omega_j \delta_{\bm t_j}$,
\begin{equation}
\label{eq:def_J_prime}
    J'_\nu(\vt) 
    = \sum_{j=1}^p \omega_j \langle \varphi_{\bm t}, \varphi_{\vt_j} \rangle_\mathds{H} -   \langle \varphi_\vt,  \bm y \rangle_{\mathds{H}} + \lambda\,.
\end{equation}
The computation of these functions is  time-consuming for the three reasons listed below. For each of these challenges, we outline our strategy to address them, and we introduce three notation—namely, the three random variables $U$, $V$, and $T$—which will be elaborated upon in the subsequent section. In Section~\ref{sec:deconvolution_mixture}, we will provide a concrete example illustrating this phenomenon.

\medskip

\paragraph{Kernel evaluation: random variable $U$ for Random feature strategies}
Firstly, it is important to note that these functions may necessitate integral approximations due to their lack of closed-form expressions. For instance, the computation of $J'_\nu(\vt)$ within each iteration of a gradient descent algorithm involves multiple evaluations of the kernel $\Kernel(\vt,\vt')$, defined in Equation \eqref{def:kernel}. In many cases, these evaluations rely on non-explicit integrals, posing computational challenges. To circumvent this issue, we will present two strategies: random Fourier feature and convolution by a non-negative function. These strategies aim to approximate the kernel $\Kernel$ by using a low-rank random kernel, achieved by evaluating the integral defining the kernel through independent Monte Carlo sampling. We need the following assumption, which is standard in the theory of Reproducing Kernel Hilbert Spaces (RKHS).

\medskip

\noindent
\textbf{Assumption \eqref{ass:kernel_invariant}.} \textit{We assume that the kernel $\Kernel$ is \textit{translation invariant}:
\begin{equation}
    \label{ass:kernel_invariant}
    \tag{$\mathbf{A}_{\mathrm{T.I.}}$}
    \forall \vt,\vt'\in\cX\,,\quad \Kernel(\vt,\vt') = k(\vt-\vt')\,,
    \end{equation}
for some function $k(\cdot)$.
}

\medskip

{\bf $\bullet$ Random Fourier feature strategy}
\noindent
By Bochner's theorem to decompose it into its spectral form and use a Monte Carlo approximation as follows:
\begin{equation}
\notag
    \Kernel(\vt,\vt') = k(\vt-\vt')
    =k(0)\int  e^{-\ii \langle \vu,\vt-\vt'\rangle}\mathrm d\sigma(\vu)
    \simeq \frac{k(0)}{m}\sum_{j=1}^m 
        \mathcal{R}\mathrm{e}\big(
            e^{-\ii \langle U_j,\vt\rangle} \overline{\displaystyle{e^{-\ii \langle U_j,\vt'\rangle}}}
        \big)
    =\frac{1}{m}\sum_{j=1}^m\vg_{\vt,\vt'}(U_j)\,,
\end{equation}
where $\mathcal{R}\mathrm{e}$ denotes the real part of a complex, $\sigma$ is the spectral probability measure associated to the kernel, $U_i$ are i.i.d. with law $\sigma$ and 
\[
\vg_{\vt,\vt'}(\vu):=\mathcal{R}\mathrm{e}\big(e^{-\ii \langle \vu,\vt\rangle} \overline{\displaystyle{e^{-\ii \langle \vu,\vt'\rangle}}}\big)\,,
\]
the bar denoting complex conjugation. Hence, the $(p\times p)$ matrix $\Kernel_\pos$ can be approximated by the following explicit rank $m$ approximation 
\begin{equation}
\notag
    \Kernel_\pos\simeq \mathbb U_\pos \mathbb U_\pos^\star
    \quad 
    \text{where}
    \quad 
    \mathbb U_\pos:=\sqrt{\frac{k(0)}{m}}\big(\displaystyle{e^{-\ii \langle U_j,\vt_i\rangle}}\big)_{\substack{1\le i\le p\\ 1\le j\le m}}\,.
\end{equation}

\medskip

{\bf $\bullet$ Convolution by non-negative function strategy}
A second way is to consider that $k(\cdot)$ can be written as a convolution $k=\tilde k\star \sigma$ for some non-negative integrable function $\sigma$, say a probability density function without loss of generality. Invoke a Monte Carlo approximation as follows:
\[
\Kernel(\vt,\vt') = (\tilde k\star \sigma)(\vt-\vt')=\int_{\R^d} \tilde k(\vt-\vt'-\vu)\sigma(\vu)\mathrm{d}\vu \simeq\frac{1}{m}\sum_{j=1}^m\vg_{\vt,\vt'}(U_j)\,,
\]
where $U_i$ are i.i.d. with law $\sigma$ and 
\begin{equation}
\label{eq:approx_kernel_convolution}
    \vg_{\vt,\vt'}(\vu):=\tilde k(\vt-\vt'-\vu)\,,    
\end{equation}
to get an approximation of the kernel. This strategy can be used in Mixture Models (MM) as shown in~\eqref{eq:kernel_MM_convolution}. As a side note, it is not hard to see that if $\tilde k(\cdot)$ is a positive type function ({\it i.e.,} it defines a RKHS kernel) so is $k(\cdot)$. 
\medskip

\paragraph{Large samples: random variable $V$ for picking a data sample at random}
A second strategy is to employ stochastic gradient computation with batch sub-sampling, which entails selecting a single data point at random--a fundamental component of various stochastic algorithms. Within our framework, this can be realized by utilizing the observation vector $\vy$. It is worth emphasizing that in specific scenarios, the observed signal $\bm y$ can itself be a random variable. For instance, consider the case where 
\begin{equation}
\label{eq:y_decompo_sum}
 \vy=\frac{1}{N}\sum_{i=1}^N \vy_i
 \quad\text{and}\quad
 \langle\varphi_\vt, {\vy}\rangle_{\mathds{H}}=\vh_\vt(V)\,,   
\end{equation}
consisting of $N$ i.i.d.~samples $\vy_i$, $V$ uniformly distributed over $\{1,\ldots,N\}$ and 
\[
\vh_\vt(\vv):=\langle\varphi_\vt,\vy_\vv\rangle_{\mathds{H}}\,.
\]
In such cases, it becomes feasible to generate an unbiased stochastic version of $\vy$ by randomly selecting a single or a mini-batch data sample. Moreover, even if $\vy$ is deterministic or does not match the aforementioned identity \eqref{eq:y_decompo_sum}, it is always possible to employ a similar strategy as described above (Random Fourier feature or convolution by a probability density function) to approximate $\langle\varphi_\vt, {\vy}\rangle_{\mathds{H}}$ itself. Certainly, note that the unbiased stochastic version of $\vy$ and random kernel approximation of $\langle\varphi_\vt, {\vy}\rangle_{\mathds{H}}$ should be jointly considered whenever possible.

\medskip

\paragraph{Many particles: random variable $T$ for picking a particle at random} The necessity for a large number of particles to attain convergence guarantees increases exponentially with the dimension $d$ of $\mathcal X$. {All these particles are simultaneously involved in the evaluation of gradient terms. As a final ingredient, we opt to use only a particle or a mini-batch of particles, selected randomly, to evaluate these terms at each step of the algorithm. The time complexity of gradient computations can be diminished by a factor $p$ (number of particles) as one can see using \eqref{eq:def_J_prime} where the sum over $p$ particles is replaced by one evaluation at a random particle's location:
\[
J'_\nu(\vt) 
= \sum_{j=1}^p \omega_j \langle \varphi_{\bm t}, \varphi_{\vt_j} \rangle_\mathds{H} -   \langle \varphi_\vt,  \bm y \rangle_{\mathds{H}} + \lambda
= \|\nu\|_{\mathrm{TV}} \mathbb{E}\big[\langle \varphi_{\bm t}, \varphi_{T} \rangle_\mathds{H}\big] -   \langle \varphi_\vt,  \bm y \rangle_{\mathds{H}} + \lambda\,,
\]
with $T\sim \nu/\|\nu\|_{\mathrm{TV}}$. Instead of choosing one particle at random, one can also pick a mini-batch of particles at random, say $m_T$ of them using, for instance the law $T=(T_1,\ldots,T_{m_T})\sim\big(\nu/\|\nu\|_{\mathrm{TV}}\big)^{\otimes m_T}$ with the gradient rule based on 
\[
J'_\nu(\vt) 
= \sum_{j=1}^p \omega_j \langle \varphi_{\bm t}, \varphi_{\vt_j} \rangle_\mathds{H} -   \langle \varphi_\vt,  \bm y \rangle_{\mathds{H}} + \lambda
= \frac{\|\nu\|_{\mathrm{TV}}}{m_T} \mathbb{E}\sum_{i=1}^{m_T}\big[\langle \varphi_{\bm t}, \varphi_{T_i} \rangle_\mathds{H}\big] -   \langle \varphi_\vt,  \bm y \rangle_{\mathds{H}} + \lambda\,.
\]
}

\subsubsection{Stochastic \& Random Feature approximations of the gradient}
\label{s:algosto}
We provide in this paragraph a general framework allowing the construction of the stochastic conic gradient particle algorithm we are studying.  Specific examples are discussed in Section \ref{s:examples} below. 

\medskip

The formulation of a stochastic approximation for the gradient $\nabla F$ necessitates certain general assumptions, which are outlined below. Of particular significance is the introduction of a random variable $Z=(T,U,V)$, which serves as a way to alleviate the computational burden.

\medskip

\noindent
\textbf{Assumption \eqref{A1}.} \textit{There exists a pair of random variables $(U,V)$ $($not necessarily independent$)$ such that, for any $\vt,\vt' \in \cX$,}
\begin{equation}
\label{A1}
\tag{$\mathbf{A_1}$}
  \langle \varphi_{\vt},\varphi_{\vt'}\rangle_{\bbH} = \bbE_U \vg_{\vt,\vt'}(U) \quad  \mathrm{and} \quad  \langle \varphi_\vt,  \vy\rangle_{\bbH} = \mathbb{E}_V \vh_\vt(V)\,,    
\end{equation}
\textit{for some explicit \textbf{bounded} functions $\vg$ and $\vh$}.

\medskip

\noindent
The latter assumption allows for a stochastic approximation of the functional $J'_\nu$. 
It exactly corresponds to what happens in Equation \eqref{eq:grad_sto1} below for the mixture model.
Indeed, if we {consider from now on the} random variable $T$ with distribution $\nu/\nu(\cX)$, sampled independently from $(U,V)$, and introduce:
\begin{equation}
	\mJ'_\nu(\vt,Z) := \|\nu\|_{\mathrm{TV}} \, \vg_{\vt,T}(U)  - \vh_\vt(V) + \lambda  \quad \mathrm{where} \quad Z:=(T,U,V)\,.
 \label{eq:J'sto}
\end{equation}
According to \eqref{A1}, we can write that 
\begin{equation}\label{def:grad_sto_J}
	\mJ'_\nu(\vt,Z) := J'_\nu(\vt) + \xi_\nu(\vt,Z) \quad \mathrm{with} \quad \mathbb{E}_Z  \xi_\nu(\vt,Z) = 0 \quad \forall \vt\in \cX\,.
\end{equation}
We can construct a similar stochastic approximation for the gradient (w.r.t. the position parameter) of the functional $J'_\nu$. First remark that 
\[
	\nabla_\vt J'_\nu(\vt) 
	= \sum_{j=1}^p \omega_j \nabla_\vt \langle \varphi_\vt, \varphi_{\vt_j} \rangle_\mathds{H} -  \nabla_\vt \langle \varphi_\vt,  \vy \rangle_{\mathds{H}}\,.
\]

\medskip

The following assumption allows the commutativity between derivation and expectation and is satisfied in many situations, including batch or mini-batch strategies and smooth integral computations. The associated term in the mixture model is \eqref{eq:grad_J_sto} and its stochastic counterpart is \eqref{eq:grad_sto2}.  

\medskip

\noindent
\textbf{Assumption \eqref{A2}.} \textit{The couple $(U,V)$ and the functions $\vg,\vh$ introduced in Assumption \eqref{A1} satisfy}
\begin{equation}
    \label{A2}\tag{$\mathbf{A_2}$}
    	\nabla_\vt \bbE_U \vg_{\vt,\vt'}(U) = \bbE_U \nabla_t \vg_{\vt,\vt'}(U) 
	\quad  \mathrm{and} \quad  
	\nabla_\vt \mathbb{E}_V \vh_\vt(V) = \mathbb{E}_V\nabla_\vt  \vh_\vt(V)\,,
\end{equation}
\textit{for any $\vt,\vt' \in \cX$} and $\bm g$ and $\bm h$ have \textbf{bounded} derivatives.

\medskip

\noindent
Then, introduce
\begin{equation}
    \label{eq:def_D}
    \mD_\nu(\vt,Z) := \|\nu\|_{\mathrm{TV}} \,\nabla_\vt \vg_{\vt,T}(U)  - \nabla_\vt \vh_\vt(V)\,,
\end{equation}
where $Z=(T,U,V)$ still denotes the same variable as above. We can then observe that 
\begin{equation}\label{def:grad_sto_gradJ}
	\mD_\nu(\vt,Z) := \nabla_\vt J'_\nu(\vt) + \zeta_\nu(\vt,Z) \quad \mathrm{with} \quad \mathbb{E}_Z  \zeta_\nu(\vt,Z) = 0 \quad \forall \vt\in \cX\,,
\end{equation}
where we have used Assumptions \eqref{A1} and \eqref{A2}.

\begin{remark}
    Lemma \ref{lem:boundJprime} in Appendix provides a explicit and uniform upper bound on $J'_\nu$ for any measure $\nu \in \mathcal{M}(\cX)_+$. From this remark, the boundedness of the functions $\vg$ and $\vf$ is indeed a reasonable assumption within this context. Furthermore, in addition to the conclusions presented in Equations \eqref{def:grad_sto_J} and \eqref{def:grad_sto_gradJ}, that establish unbiased estimations of $J'_\nu$ and $\nabla_\vt J'_\nu$, it is imperative to highlight that our assumptions \eqref{A1} and \eqref{A2} also result in almost sure upper bounds on $|J_\nu'(.,Z)|$ and $\|\mathbf{D}_{\nu}(.,Z)\|$ (for more comprehensive details, we refer to Lemma \ref{lem:boundJprime} and Proposition \ref{prop:bornes_techniques_J} ).
\end{remark}

\begin{remark}
    Assumption~\eqref{A2} implicitly assumes that the left hand side gradient exists. This is the case when the kernel is continuously differentiable, see \cite[Definition 4.35]{steinwart2008support} for a proper definition. This later property is equivalent to the continuous differentiability of the feature map, see \cite[Lemma 4.34]{steinwart2008support}. 
\end{remark}

\subsubsection{Mini-batch and variance reduction}
Introducing randomness in the CPGD algorithm creates some issues in terms of convergence. It is indeed necessary to control the fluctuation of the random terms $\mJ'_\nu(.,Z)$ and $\mD_\nu(.,Z)$ w.r.t. their deterministic counterparts at each iteration. Assumptions \eqref{A1} and \eqref{A2} already provides some tools in this direction: it allow to obtain almost sure bounds on these quantities. Nevertheless, our analysis will require slightly stronger results and in particular some control on the variance of theses stochastic approximations. To this end, we introduce in this section a mini-batch step.

\medskip

The principle is to draw a $m$-sample of i.i.d. random variables $Z_1,\dots, Z_m$ having the same law than~$Z$ introduced in \eqref{eq:J'sto}. The term $m\in \mathbb{N}^*$ denotes the mini-batch sample size and might depends on the iteration step. Then, we introduce, for any $\nu \in \mathcal{M}_+(\rad)$ and $\vt \in \rad$
\begin{equation}
\widehat{\mJ'_{\nu}}(\vt) := \frac{1}{m} \sum_{l=1}^m \mJ'_\nu(\vt,Z_l) \quad \mathrm{and} \quad \widehat{\mD_{\nu}}(\vt) :=  \frac{1}{m} \sum_{l=1}^m \mD_\nu(\vt,Z_l).
\label{eq:minibatch}
\end{equation}
The terms $\widehat{\mJ'_{\nu}}(.)$ and $\widehat{\mD_{\nu}}(.)$ simply correspond to averaged version of the stochastic approximations introduced in the previous section. Their second order moments are controlled by the mini-batch size $m$. To alleviate notation, we do not highlight the dependency of the average terms introduced in \eqref{eq:minibatch} with respect to the value of $m$. This dependency will be clear following the context. 

\medskip

\subsubsection{Stochastic gradient updates}

Both weights and position updates are based on the CPGD principle which will be discussed in details in Section \ref{s:algo} below. The weights optimization is performed employing an exponential weights update on a per-particle basis:
\begin{equation}\label{eq:up_w}
\forall j \in \{1,\ldots,p\}\,, \qquad \omega^{k+1}_j = \omega^{k}_j e^{- \alpha \widehat{\mJ'_{\nu_k}}(\vt_j^k)}\,.
\end{equation}
In a similar way, the positions updates satisfies, for any $j\in \lbrace 1,\dots, p \rbrace$
\begin{align}
\vt_j^{k+1}
& =  \arg\min_{\vt \in \cX}\left\{\langle \vt, \widehat{\mD_{\nu_k}}(\vt_j^k))_j\rangle + \frac{1}{2\eta} \|\vt-\vt^j_k\|^2\right\} \label{eq:optim_t_projgrad} \\
 & = \arg\min_{\vt \in \cX}\left\{\left\|\vt-(\vt_j^k- \eta \widehat{\mD_{\nu_k}}(\vt_j^k))_j) \right\|^2\right\}  \nonumber
\end{align}
which leads to projected gradient descent, after simplifying the similar weighting terms appearing on the one hand in the gradient, and on the other hand the conic metric. This update is then given as follows:
\begin{equation}
\forall j\in \lbrace 1,\dots, p \rbrace, \qquad \vt_{j}^{k+1} =\pi_\rad\left(\vt_{j}^{k} -  \eta  \widehat{\mD_{\nu_k}}(\vt_j^k) \right),
\label{eq:update_pos_sto}
\end{equation}
{where $\pi_\rad$ denotes the projection operator over $\rad$. Recall that $\rad = \bar{B}(0,R_{\mathcal X})$ for some radius~$R_{\mathcal X}>0$ and, in this setting, the projection $\pi_\rad$ is uniquely defined and may be easily computed as:
$$\forall u \in \R^d  \qquad \pi_{\rad}(u) = \frac{R_{\mathcal X}}{\|u\|} u \mathbf{1}_{\|u\| > R_{\mathcal X}} + u \mathbf{1}_{\|u\| \leq R_{\mathcal X}}.$$
We refer again to Section \ref{s:algo} for a complete presentation of the rationale behind this strategy. \\
}

\noindent
Algorithm \ref{algo:SCPGD_pro} below summarizes our stochastic optimization strategy that uses unbiased stochastic realizations, as outlined in the previous assumptions. At each iteration $k$, the elements $Z_l^{k+1}$ of the mini-batch sample are made of i.i.d. random vectors $(T_{\ell}^{k+1},U_{\ell}^{k+1},V_{\ell}^{k+1})$ where the $(U_{\ell}^{k+1},V_{\ell}^{k+1})$ have the same distribution as $(U,V)$ introduced in Assumption \eqref{A1}, and $T_l^{k+1} \sim \nu_k / \nu_k(\mathcal{X})$.

\begin{center}
\begin{algorithm}
{\caption{{Stochastic \& Random Feature Conic Particle Gradient Descent ({\tt FastPart}}) \label{algo:SCPGD_pro}}}
\begin{algorithmic}[1]
\Require{Learning rates $(\alpha,\eta)$, Initialization $(\weights^0,\pos^0)$, Projection radius $R_\rad$, Mini-batch size $(m_k)_{k \ge 1}$}; 
\State{Weights: $\weights^k$ and Positions: $\pos^k$};
    \For{$k =1,\ldots, K$ }\Comment{$K$ gradient steps}
        \State{Set $\nu_k\longleftarrow\nu(\weights^{k},\pos^{k})$;}\Comment{Particles}
        \State{Sample 
        \begin{equation*}
            (Z^{k+1}_{\ell})_{1\leq \ell \leq m_k} \longleftarrow ((T_{\ell}^{k+1},U_{\ell}^{k+1},V_{\ell}^{k+1}))_{1\leq \ell \leq m_k}
        \quad
        \text{where}
        \quad
        T_l^{k+1}\sim \nu_k/\nu_k(\mathcal{X})\,;
        \end{equation*}
        \Comment{Stochastic mini-batch variables;}        
        }
        \State{Use Equations \eqref{eq:J'sto} and \eqref{eq:def_D} and compute}  
        \[
        \widehat{\mJ'_{\nu_k}}(\vt_j^k) := \frac{1}{m_k} \sum_{\ell=1}^{m_k} \mJ'_{\nu_k}(\vt_j^k,Z^{k+1}_\ell)\quad  \text{and} \quad 
        \widehat{\mD_{\nu_k}}(\vt_j^k) := \frac{1}{m_k} \sum_{\ell=1}^{m_k} \mD_{\nu_k}(\vt_j^k,Z^{k+1}_{\ell})
        \,;
        \]
        \State{Update the weights and the positions with Equations \eqref{eq:up_w} and \eqref{eq:update_pos_sto}:
\begin{equation}\label{eq:up_w_pos}
\forall j \in \{1,\ldots,p\}\,, \qquad \omega^{k+1}_j = \omega^{k}_j e^{-\alpha \widehat{\mJ'_{\nu_k}}(\vt_j^k)}\quad \text{and} \quad
\vt_{j}^{k+1} =\pi_\rad\left(\vt_{j}^{k} - \eta  \widehat{\mD_{\nu_k}}(\vt_j^k)\right),
\end{equation}
\Comment{Projected Mirror descent}
        }
    \EndFor
\end{algorithmic}
\end{algorithm}
\end{center}

\begin{remark}[Importance of the projection step]
The projection operator $\pi_\rad$ appearing in the position updates \eqref{eq:update_pos_sto} constraints the particles to stay on the set $\rad$ along the iterations. In the deterministic version of the algorithm (namely by using directly $\mJ'_{\nu_k}(\vt_j^k)$ and $\mD_{\nu_k}(\vt_j^k)$ in Step $5$ of Algorithm \ref{algo:SCPGD_pro}), this projection step appears to be necessary only for the global convergence results (analog of Theorem \ref{theo:global_sto}) while control of the TV-norm (Proposition \ref{prop:TV_nuk}) along with local convergence (Theorem \ref{theo:local_sto}) still hold.  Some issues arise when introducing randomness during the gradient descent. In particular, the introduction of the variable $T$ in Section \ref{s:algosto} does not allow to manage a control of $\| \nu_k \|_{\mathrm{TV}}$ along the iteration. In particular, the terms $\langle \varphi_\vt, \varphi_\vs\rangle$ is no more lower bounded when $\vt$ and $\vs$ do not necessarily belong to $\mathcal{X}$. We stress that these issues may correspond to an artefact effect related to our proof techniques. We can indeed check that for a particle whose location $\vt_j^k$ is far away from $\mathcal{X}$ (in a sense to made precise) at step $k$, then $\omega_j^{k+1} \leq \omega_j^k e^{-\lambda/2}$. From an heuristic point of view, such a particle should not disturb the convergence process after few steps.  
\end{remark}

\subsection{Main results}
\label{s:mainresults}
We denote by $\nu_k$ the measure of particles produced at step $k$ by Algorithm \ref{algo:SCPGD_pro}. Our main contributions are threefold: 
\begin{itemize}
\item A control on the total-variation norm of the measures $\nu_k$ along the different iterations; 
\item A global minimization result;
\item A local investigation on the evolution of $J'_{\nu_k}$ and its gradient. 
\end{itemize}
We emphasize that these results are stated in a finite horizon setting and are therefore non-asymptotic in terms of $K$.

\medskip

Here and below, we will require additional notation. We set
\begin{align*}
\| \underline{\varphi}\|_\mathds{H} 
&:= \inf_{s,t\in \cX} \langle \varphi_t,\varphi_s \rangle_\mathds{H}
&
\| \mathbf{g} \|_{\mathds{\mathrm{Inf}}} 
&:= \essinf_{s,t\in\rad} \bm g_{t,s}\\
\|\vg\|_\infty 
&:= \esssup_{s,t\in\rad} |\vg_{s,t}|
&
\|\vh\|_\infty 
&:= \esssup_{t\in\rad,v} |\vh_{t}|
\end{align*}
where the functions $\bm g$ and $\bm h$ have been introduced in \eqref{A1} and the essential infimum and supremum are taken with respect to the probability space defining the random variable $Z=(T,U,V)$. Furthermore, Assumption~\eqref{ass:kernel_invariant} implies the following bound 
\[
\forall t \in \cX\,, \quad 
\|\varphi_{\vt}\|^2_{\bbH}  = \langle \varphi_\vt ,\varphi_\vt\rangle_{\bbH} = k(0).
\]
and we will use below the notation $\|\varphi\|_{\infty,\mathbb{H}} :=\sqrt{k(0)}$ to refer to the norm of any element $\varphi_\vt$ in $\bbH$. We also define $\|\varphi\|_{\mathrm{Lip}}$ as the Lipschitz constant associated to $\varphi$, namely
\[
\| \varphi\|_{\mathrm{Lip}} = \sup_{s\neq t\in \mathcal{X}} \frac{\| \varphi_t -\varphi_s\|_\mathbb{H}}{\| s-t\|}.
\]

\subsubsection{Boundedness of the sequence}
The next result establishes  a preliminary upper bound of the total variation norm of $(\nuk)_{k \ge 1}$ that holds \textit{uniformly} over the iterations. It will be the key to obtain the convergence towards minimizers. 

\begin{proposition} 
\label{prop:TV_nuk} 
{Assume \eqref{A1} and let $\mathbf{g}$ and $\mathbf{h}$ be the functions appearing in this assumption. Assume that  $\alpha \leq 1$, that $\|\bm g\|_{\mathrm{Inf}} > 0$ and define $r_0$ as:
\begin{equation}
r_0:= \frac{\max(0,\| \mathbf{h}\|_\infty -\lambda)}{\| \mathbf{g}\|_{\mathrm{Inf}}} e^{  \max(0,\| \mathbf{h}\|_\infty -\lambda)}
\label{def:r0}
\end{equation}
Then, for any $k\in \mathbb{N}$, we have 
\begin{equation}
\|\nu_k\|_{\mathrm{TV}} \leq R_0 := \|\nu_0\|_{\mathrm{TV}} \vee (p\, r_0) .
\label{def:R0}
\end{equation}}
\end{proposition}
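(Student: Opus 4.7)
The plan is to reduce the bound directly to the exponential weight update, since the projection step keeps positions in $\cX$ and hence only the weights can affect $\|\nu_k\|_{\mathrm{TV}}=\sum_{j=1}^{p}\omega_j^k$. First I would rewrite the stochastic functional evaluated at $\vt_j^k$ using its explicit form \eqref{eq:J'sto}:
\[
\widehat{\mJ'_{\nu_k}}(\vt_j^k)
=\|\nu_k\|_{\mathrm{TV}}\,\frac{1}{m_k}\sum_{\ell=1}^{m_k}\bm g_{\vt_j^k,T_\ell^{k+1}}(U_\ell^{k+1})
-\frac{1}{m_k}\sum_{\ell=1}^{m_k}\bm h_{\vt_j^k}(V_\ell^{k+1})+\lambda\,.
\]
Since $\vt_j^k\in\cX$ by the projection step and $T_\ell^{k+1}\in\cX$ almost surely (drawn from $\nu_k/\nu_k(\cX)$, which is supported on the atoms of $\nu_k$), the definitions of $\|\bm g\|_{\mathrm{Inf}}$ and $\|\bm h\|_\infty$ yield the almost sure lower bound
$\widehat{\mJ'_{\nu_k}}(\vt_j^k)\geq\|\nu_k\|_{\mathrm{TV}}\,\|\bm g\|_{\mathrm{Inf}}-\|\bm h\|_\infty+\lambda$, which is the engine of the whole proof.

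The second step is an induction on $k$ organised around the threshold
$\tau:=\max(0,\|\bm h\|_\infty-\lambda)/\|\bm g\|_{\mathrm{Inf}}$, so that $r_0=\tau\,e^{\max(0,\|\bm h\|_\infty-\lambda)}$. Assuming $\|\nu_k\|_{\mathrm{TV}}\leq R_0$, if $\|\nu_k\|_{\mathrm{TV}}\geq\tau$ the lower bound above is nonnegative for every particle $j$, so each weight is non-increasing and $\|\nu_{k+1}\|_{\mathrm{TV}}\leq\|\nu_k\|_{\mathrm{TV}}\leq R_0$. Otherwise, when $\|\nu_k\|_{\mathrm{TV}}<\tau$, I would discard the nonnegative kernel contribution to obtain $\widehat{\mJ'_{\nu_k}}(\vt_j^k)\geq-\max(0,\|\bm h\|_\infty-\lambda)$, and then exploit $\alpha\leq 1$ (which gives $e^{-\alpha x}\leq e^{-x}$ when $x\leq 0$) to deduce the per-particle inequality
$\omega_j^{k+1}\leq\omega_j^k\,e^{\max(0,\|\bm h\|_\infty-\lambda)}$. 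Combined with the trivial $\omega_j^k\leq\|\nu_k\|_{\mathrm{TV}}<\tau$, this yields $\omega_j^{k+1}<r_0$ for every $j$, so that $\|\nu_{k+1}\|_{\mathrm{TV}}<p\,r_0\leq R_0$, closing the induction.

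The argument is essentially algebraic once the decomposition above is in place, and I do not expect any deep step. The main subtlety, and the reason the bound carries the explicit factor $p$, is that in the small-TV regime one cannot iterate the aggregate inequality $\|\nu_{k+1}\|_{\mathrm{TV}}\leq\|\nu_k\|_{\mathrm{TV}}\,e^{\max(0,\|\bm h\|_\infty-\lambda)}$, which would blow up exponentially under a chain of consecutive increases; one must instead invoke the per-particle bound $\omega_j^k\leq\|\nu_k\|_{\mathrm{TV}}$ to get a uniform $r_0$ per particle and then pay a factor $p$ only when summing over particles. The role of the hypothesis $\|\bm g\|_{\mathrm{Inf}}>0$ is precisely to make $\tau$ finite so that the transition between the two regimes is well defined.
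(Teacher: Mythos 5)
Your proof is correct and rests on the same engine as the paper's: the almost-sure lower bound $\widehat{\mJ'_{\nu_k}}(\vt_j^k)\ge \|\nu_k\|_{\mathrm{TV}}\,\|\bm g\|_{\mathrm{Inf}}-\|\bm h\|_\infty+\lambda$ (valid because the projection and the sampling of $T$ keep all points in $\cX$), the hypothesis $\alpha\le 1$, and a two-regime dichotomy closed by induction. The only organizational difference is that the paper applies the threshold $\max(0,\|\bm h\|_\infty-\lambda)/\|\bm g\|_{\mathrm{Inf}}$ to each individual weight $\omega_j^k$ (lower-bounding $\|\nu_k\|_{\mathrm{TV}}$ by $\omega_j^k$), whereas you apply it to the total mass $\|\nu_k\|_{\mathrm{TV}}$ and only invoke $\omega_j^k\le\|\nu_k\|_{\mathrm{TV}}$ in the small-mass regime; your aggregate version closes the induction directly to the stated bound $\|\nu_0\|_{\mathrm{TV}}\vee(p\,r_0)$, while the paper's per-particle induction literally yields $\sum_j\max(\omega_j^0,r_0)$.
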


\medskip

\noindent
Provided the mass of the measure $\nu_0$ at the initialization step is not too large, $\|\nu_k\|_{\mathrm{TV}}$ remains bounded along the iterations of the algorithm. We stress that the control is deterministic although we consider a stochastic algorithm. The main ingredient of the proof is to take advantage of the relationship between $\omega_j^k$ and $J_{\nu_k}'$ (together with its stochastic counterpart). The update displayed in Algorithm \ref{algo:SCPGD_pro} then allows to conclude. The complete proof is postponed to Section \ref{s:proofTV_nuk}.

\begin{remark}
    Note that $|\langle\vy,\varphi_\vt\rangle|\leq \|\vh\|_{\infty}$ and that the KKT condition reads 
    \[
    \forall\vt\in\cX\,,\quad
    J'_{\mu^\star}(\vt)=\sum_{j=1}^{p^\star} \omega_j^\star \langle \varphi_{\vt}, \varphi_{\vt_j^\star} \rangle_\mathds{H} -   \langle \varphi_\vt,  \bm y \rangle_{\mathds{H}} + \lambda\geq 0\]
    We deduce that if $\lambda\geq \|\vh\|_{\infty}$ then the KKT conditions are satisfied by the null solution ($\mu^\star=0$), which is the unique solution to \eqref{def:Blasso+}. Hence, the condition $r_0=0$ implies that the null measure is solution. Proposition~\ref{prop:TV_nuk} is informative when $r_0>0$ which occurs as soon as there exists a non-zero solution. 
\end{remark}

\begin{remark}
The assumption $\|\bm g\|_{\mathrm{Inf}} > 0$ appears to be quite reasonable as soon as $\| \underline{\varphi}\|_\mathds{H}$>0. For any $\vs,\vt\in \cX$, the variable $\bm g_{\vs,\vt}(U)$ is indeed a stochastic approximation of $\langle \varphi_s,\varphi_t\rangle_\mathds{H}$. We get that $\|\bm g\|_{\mathrm{Inf}} > 0$ with common assumptions on the construction of this approximation. 

In the case of an approximation by convolution, displayed in~\eqref{eq:approx_kernel_convolution}, note that if $\inf_{\vt,\vt',\vu}\tilde k(\vt-\vt'-\vu)>0$ then the assumption $\|\bm g\|_{\mathrm{Inf}} > 0$ is satisfied. For instance, if $\tilde k(\cdot)$ is a Gaussian density and $\vu\mapsto\sigma(\vu)$ has compact support then  $\inf_{\vt,\vt',\vu}\tilde k(\vt-\vt'-\vu)>0$ as $\vt,\vt'\in\cX$ are also bounded.
\end{remark}

\subsubsection{Global minimization with projected swarm stochastic optimization}

Consider $\mus$ a measure that \textit{globally} minimizes $J$, obtained from Theorem \ref{theo:mu_star}. The aim of this section is to show that our algorithm can produce a solution close to $\mus$ (in a sense made precise in Theorem \ref{theo:global_sto} below), under some specific conditions. Given a fixed number $K$ of iterations of Algorithm \ref{algo:SCPGD_pro}, we define the Ces\`aro average of our sequence $(\nu_k)_{k \ge 0}$ by:
\begin{equation}
\label{def:nu_bar} \bar{\nu}_K = \frac{1}{K+1} \sum_{k=0}^K \nuk\,.
\end{equation}
We then obtain the following global minimization result whose proof is displayed in Section \ref{s:proof_global_sto}.

\begin{theorem}\label{theo:global_sto}
{Assume that $\mus$ has a support contained in $\rad$. Consider an integer $K$ and the sequence of $(\nuk)_{1 \leq k \leq K}$ defined in Algorithm \ref{algo:SCPGD_pro}. We set the learning rates as:
\[
    \alpha
    =\sqrt{\frac{d \|\mus\|_{\mathrm{TV}}}{R_0^3 K}} \quad \text{and} \quad \eta = \sqrt{\frac{d R_0}{K^{3} \|\mus\|_{\mathrm{TV}}}}\,,
\]
where $R_0$ is introduced in \eqref{def:R0}. 
Assume that the   measure $\nu_0$ is uniformly distributed over  a uniform grid of step-size $\delta =2\sqrt{\frac{d}{\|\mus\|_{\mathrm{TV}} KM}}$, then:
\[
\mathbb{E} \left[ J(\bar{\nu}_K)-J(\mus) \right]   \leq \mathfrak{C} \sqrt{\frac{d \|\mus\|_{\mathrm{TV} }R_0^3}{K}} \left[ \log (d \|\mus\|_{\mathrm{TV}} R_0^3 K) + \frac{\log(|\cX|)}{d}\right]
\,,
\]
for some positive constant $\mathfrak{C}$ depending only on $\|\varphi\|_{\infty,\mathbb{H}}, \|\bm y\|_\mathbb{H}, \|\mathbf{g}'\|_\mathbb{H}, \|\bm h'\|_\mathbb{H}$.}
\end{theorem}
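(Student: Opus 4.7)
The plan is to exploit convexity of $J$ on $\measet_+$ and to recast the updates in Algorithm~\ref{algo:SCPGD_pro} as a simultaneous entropic mirror descent on the weights and a projected stochastic gradient descent on the positions. By Jensen's inequality applied to the Cesàro average, and the subgradient inequality for the convex functional $J$ using its Fréchet derivative $J'_{\nu_k}$, one has
\[
J(\bar\nu_K)-J(\mu^\star) \;\leq\; \frac{1}{K+1}\sum_{k=0}^K \bigl(J(\nu_k)-J(\mu^\star)\bigr) \;\leq\; \frac{1}{K+1}\sum_{k=0}^K \bigl\langle J'_{\nu_k},\, \nu_k-\mu^\star\bigr\rangle,
\]
so it suffices to control the expected cumulative linearized gap on the right.

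Because $\mu^\star$ need not lie on the grid supporting $\nu_0$, I would first approximate it by a grid-proxy $\tilde\mu^\star$ of the same total mass whose atoms are the nearest grid points to those of $\mu^\star$. Using the uniform bound on $J'_{\nu_k}$ provided by Lemma~\ref{lem:boundJprime} together with the Lipschitz constant $\|\varphi\|_{\mathrm{Lip}}$, the discretization cost is $|\langle J'_{\nu_k},\mu^\star-\tilde\mu^\star\rangle|\lesssim \delta\sqrt{d}\,\|\mu^\star\|_{\mathrm{TV}}$. The remaining gap $\langle J'_{\nu_k},\nu_k-\tilde\mu^\star\rangle$ then splits into a mass-transport piece, governed by the exponential weight update~\eqref{eq:up_w}, and a position piece, governed by the projected update~\eqref{eq:update_pos_sto}. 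Introducing the joint Lyapunov functional
\[
\Psi_k \;:=\; \frac{1}{\alpha}\,\mathrm{KL}\bigl(\tilde w^\star\,\|\, w^k\bigr) \;+\; \frac{1}{2\eta}\sum_{j} \tilde\omega_j^\star\, \|\vt_j^k-\vt_j^\star\|^2,
\]
the standard one-step descent lemmas for entropic mirror descent and for projected gradient descent, applied to the unbiased splittings~\eqref{def:grad_sto_J}--\eqref{def:grad_sto_gradJ}, yield $\mathbb{E}\langle \widehat{\mJ'_{\nu_k}},\nu_k-\tilde\mu^\star\rangle \leq \mathbb{E}[\Psi_k-\Psi_{k+1}]+\alpha\,C_1 R_0^2 + \eta\,C_2 R_0^2$, where $C_1,C_2$ aggregate $\|\vg\|_\infty, \|\vh\|_\infty$ and their derivative bounds. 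Here Proposition~\ref{prop:TV_nuk} is crucial since it controls $\|\nu_k\|_{\mathrm{TV}}\leq R_0$ uniformly in $k$, and hence the magnitude of the stochastic gradients, which all carry a $\|\nu_k\|_{\mathrm{TV}}$ factor. Telescoping and cancelling the martingale noise upon taking expectations gives
\[
\mathbb{E}\sum_{k=0}^K \bigl\langle J'_{\nu_k},\, \nu_k-\tilde\mu^\star\bigr\rangle \;\lesssim\; \frac{\mathrm{KL}(\tilde w^\star\|w^0)}{\alpha} + \frac{\|\mu^\star\|_{\mathrm{TV}} R_\mathcal{X}^2}{\eta} + (\alpha\, C_1+\eta\, C_2)\,K R_0^2.
\]

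Plugging in the grid initialization, $\nu_0$ is uniform on roughly $(R_\mathcal{X}/\delta)^d$ atoms, so $\mathrm{KL}(\tilde w^\star\|w^0)\lesssim \|\mu^\star\|_{\mathrm{TV}}\,d\log(R_\mathcal{X}/\delta)$, which for the prescribed $\delta$ gives the announced logarithmic factor $\log(d\|\mu^\star\|_{\mathrm{TV}}R_0^3 K)+\log|\mathcal{X}|/d$ after dividing by $K+1$. The stated choices $\alpha=\sqrt{d\|\mu^\star\|_{\mathrm{TV}}/(R_0^3 K)}$ and $\eta=\sqrt{dR_0/(K^3\|\mu^\star\|_{\mathrm{TV}})}$ then balance the four terms to deliver the $\mathcal{O}(\sqrt{d\|\mu^\star\|_{\mathrm{TV}}R_0^3/K}\,\log K)$ rate, with the discretization error $\sqrt{d}\,\delta\,\|\mu^\star\|_{\mathrm{TV}}$ absorbed into the $\log$ factor. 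The main technical obstacle is the joint analysis of the nonlinear multiplicative weight update (which targets an unnormalized measure rather than a probability distribution) together with the projected Euclidean position update whose stochastic estimator carries the awkward $\|\nu_k\|_{\mathrm{TV}}$ factor. Propagating the martingale noise through the KL-part of $\Psi_k$, controlling cross terms between position drift and mass redistribution in a way compatible with the conic metric underlying CPGD, and carefully exploiting the mini-batch variance bounds of~\eqref{eq:minibatch} alongside Proposition~\ref{prop:TV_nuk} will constitute the bulk of the work.
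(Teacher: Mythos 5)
Your overall skeleton (convexity, Cesàro average, grid approximation of $\mus$, entropy term from the uniform initialization, telescoping plus martingale cancellation) matches the paper's strategy for the weight part, and the first reduction $J(\bar\nu_K)-J(\mus)\leq \frac{1}{K+1}\sum_k\int J'_{\nuk}\,d[\nuk-\mus]$ is exactly the paper's starting point (it even holds with an exact remainder via Proposition \ref{prop:jnuprime}). But there is a genuine gap in how you treat the position updates, and it is not a technicality.

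Your Lyapunov functional contains the term $\frac{1}{2\eta}\sum_j\tilde\omega_j^\star\|\vt_j^k-\vt_j^\star\|^2$, i.e.\ you compare the moving particles to a \emph{fixed} target configuration. The standard projected-SGD telescoping on $\|\vt_j^k-\vt_j^\star\|^2$ only yields a bound on $\sum_k\langle\nabla J'_{\nuk}(\vt_j^k),\vt_j^k-\vt_j^\star\rangle$; to convert this into a bound on $J'_{\nuk}(\vt_j^k)-J'_{\nuk}(\vt_j^\star)$ — which is what the linearized gap $\int J'_{\nuk}d[\nuk-\tilde\mu^\star]$ actually requires — you would need convexity of $\vt\mapsto J'_{\nuk}(\vt)$, and this function (a sum of kernel evaluations minus $\langle\varphi_\vt,\vy\rangle$) is not convex. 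A symptom that the decomposition is wrong is the learning rate it produces: balancing your $\|\mus\|_{\mathrm{TV}}R_{\cX}^2/\eta$ term against $\eta C_2 K R_0^2$ gives $\eta\sim K^{-1/2}$, whereas the theorem prescribes $\eta\sim K^{-3/2}$. The paper avoids the issue entirely by introducing a \emph{shadow sequence} $\nuke$ whose atoms are transported by the very same maps $\mathcal{T}_{k+1}(t)=\pi_\rad(t-\eta\widehat{\mD_{\nu_k}}(\cdot))$ as the algorithm's particles, but whose weights stay at their (optimized) initial values $\omega_j^\varepsilon$. Then $\int J'_{\nuk}d[\nuk-\nuke]$ involves measures with \emph{identical supports}, so no position comparison is needed and the term is handled purely by the entropic/mirror argument (Proposition \ref{prop:md} plus the Hoeffding bound of Proposition \ref{prop:hoeffding}); the price is the residual term $\int J'_{\nuk}d[\nuke-\mus]\leq\mathfrak{A}_k\|\nuke-\mus\|^*_{BL}$, whose growth is the accumulated drift $\sum_\ell\|\nu^\varepsilon_{\ell+1}-\nu^\varepsilon_\ell\|^*_{BL}\lesssim\eta\,\ell\,R_0\|\mus\|_{\mathrm{TV}}$. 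It is this $R_0^2\eta\|\mus\|_{\mathrm{TV}}K$ term — there is no $1/\eta$ term anywhere — that forces $\eta\sim K^{-3/2}$: the positions are treated as a slowly drifting perturbation, not as a descent mechanism. Without this device (or some substitute for the missing convexity in $\vt$), your argument does not close.
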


\medskip

\noindent
A careful inspection of the previous upper bound shows that to obtain an $\epsilon$ approximation with our Ces\`aro averaged measure $\bar{\nu}_K$, (while removing the effect of the log term) we need to choose $K$ as: \[K_{\varepsilon}=d R_0^3  \|\mus\|_{\mathrm{TV}} \varepsilon^{-2}.\]
Then, the  grid step-size is then of the order $\delta_{\varepsilon}$ given by:
\[\delta_{\varepsilon} = \frac{\varepsilon}{\|\mus\|_{\mathrm{TV}} }.\]
We finally observe that the number of particles $p$ needed to obtain an $\varepsilon$ approximation is then of the order:
\[
p_\varepsilon = |\cX| \|\mus\|_{\mathrm{TV}}^{d} \varepsilon^{-d}.
\]
Hence, if the number of iteration varies polynomially in terms of $\varepsilon^{-2}$, we observe the degradation of the number of particles needed to well approximate any distribution over $\cX$ in terms of the dimension $d$.

\begin{remark}
Results displayed in Theorem \ref{theo:global_sto}  (together with Proposition \ref{prop:TV_nuk}) hold for every possible value for the mini-batch sample size $(m_k)_{k\in \mathbb{N}^*}$. In particular, one can set $m_k=1$ for any $k\in \lbrace 1,\dots, K \rbrace$, which corresponds to the case where a single variable $Z^{k+1}$ is drawn at each step. Indeed, the control on the TV-norm and the global convergence results only require an almost sure bounded stochastic approximation of our gradients. In the next section (local convergence), we need stronger constraint, and in particular control on the variance of these approximations.  
\end{remark}

{
\begin{remark}
The bound \eqref{eq:LemmeF1Chizat} displayed at the end of the proof of Theorem \ref{theo:global_sto} can be considered as the analogue of Lemma F1 in \cite{chizat2022sparse} which is obtained with the deterministic instance of our algorithm. Taking advantage of this lemma, Theorem 4.2 of \cite{chizat2022sparse} then provide a global convergence result in some kind of asymptotic context. This result (with similar calibration for $\alpha$ and $\beta$ but associated exponential convergence rates) rely in particular on a Polyak–Łojasiewicz inequality that appears quite difficult to manage in stochastic context. Theorem \ref{theo:global_sto} should hence be understood as a non-asymptotic version of this control which appears to be of order $\mathcal{O}(\log(K)/\sqrt{K})$.
\end{remark}}

\subsubsection{Local minimization with swarm stochastic optimization}
 
To conclude this contribution, we state a complementary result that quantifies the behaviour of Algorithm~\ref{algo:SCPGD_pro} when the number of particles used is not ``as large'' as the one indicated in Theorem \ref{theo:global_sto}.

\begin{theorem}\label{theo:local_sto}
{Assume that the kernel is two times continuously differentiable}. Set $\alpha=\eta=1/\sqrt{K}$, {$m_k = \sqrt{K}$ for all $k\in \lbrace 1,\dots, K\rbrace$ and assume that   $\alpha C_1 (R_0+1) <1$ where $\mathcal{C}_1$ is introduced in Lemma \ref{lem:boundJprime}. For any initial measure satisfying the requirement of Proposition \ref{prop:TV_nuk}}, if $\tau_K$ refers to a random variable uniformly distributed over $\{1,\ldots,K\}$, independent from  $(\nuk)_{k \ge 1}$, then:
$$
\mathbb{E}\left[ \|J'_{\nu_{\tau_K}}\|^2_{\nu_{\tau_K}} + \|\nabla J'_{\nu_{\tau_K}}\|^2_{\nu_{\tau_K}} \right]  \leq  \frac{J(\nu_0)+\mathfrak{C} (1+R_0^4)}{\sqrt{K}},
$$
for some constant $\mathfrak{C}$ that depends on $\|\varphi\|_{\mathrm{Lip}}$.
\end{theorem}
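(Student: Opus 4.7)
The rate $\mathcal{O}(1/\sqrt{K})$ and the Cesàro form $\mathbb{E}\bigl[\|J'_{\nu_{\tau_K}}\|^2_{\nu_{\tau_K}}+\|\nabla J'_{\nu_{\tau_K}}\|^2_{\nu_{\tau_K}}\bigr]=\tfrac{1}{K}\sum_{k=1}^K\mathbb{E}\bigl[\|J'_{\nu_k}\|^2_{\nu_k}+\|\nabla J'_{\nu_k}\|^2_{\nu_k}\bigr]$ point to the standard non-convex SGD template: establish a one-step stochastic descent inequality, telescope, and divide by $K$. The whole argument is carried out on the deterministic event $\{\|\nu_k\|_{\mathrm{TV}}\le R_0\text{ for all }k\}$, which holds under the hypotheses by Proposition~\ref{prop:TV_nuk}, and it will use the almost sure bounds on $\widehat{J'_{\nu_k}}$ and $\widehat{\mathbf{D}_{\nu_k}}$ that follow from $(\mathbf{A_1})$--$(\mathbf{A_2})$, Lemma~\ref{lem:boundJprime} and Proposition~\ref{prop:bornes_techniques_J}.

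The one-step inequality would be obtained by a second-order Taylor expansion of $J$ at $\nu_k$ along the combined weight--position update. Recalling $\partial_{\omega_j}J(\nu)=J'_\nu(\vt_j)$ and $\partial_{\vt_j}J(\nu)=\omega_j\nabla_\vt J'_\nu(\vt_j)$, I would linearize the exponential $e^{-\alpha\widehat{J'_{\nu_k}}(\vt_j^k)}=1-\alpha\widehat{J'_{\nu_k}}(\vt_j^k)+O(\alpha^2)$ uniformly in $(j,k)$ (valid thanks to the condition $\alpha C_1(R_0+1)<1$, which keeps $|\alpha\widehat{J'_{\nu_k}}|$ bounded) and treat the projected gradient step on positions via the variational inequality $\eta\bigl\langle\widehat{\mathbf{D}_{\nu_k}}(\vt_j^k),\vt_j^{k+1}-\vt_j^k\bigr\rangle\le -\|\vt_j^{k+1}-\vt_j^k\|^2$ satisfied by $\pi_\rad$. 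Combined with the Lipschitz bounds on $J'_\nu$ and $\nabla_\vt J'_\nu$ that the two-times continuous differentiability of the kernel supplies on the TV-ball of radius $R_0$, this yields
\begin{align*}
J(\nu_{k+1})-J(\nu_k)
&\le -\alpha\sum_j\omega_j^k J'_{\nu_k}(\vt_j^k)\,\widehat{J'_{\nu_k}}(\vt_j^k)
 -\eta\sum_j\omega_j^k\bigl\langle\nabla_\vt J'_{\nu_k}(\vt_j^k),\widehat{\mathbf{D}_{\nu_k}}(\vt_j^k)\bigr\rangle\\
&\quad +\mathfrak{C}(1+R_0^4)(\alpha^2+\eta^2).
\end{align*}
Taking conditional expectation with respect to $\mathcal{F}_k$, the unbiasedness identities~\eqref{def:grad_sto_J} and~\eqref{def:grad_sto_gradJ} convert the two linear parts into $-\alpha\|J'_{\nu_k}\|^2_{\nu_k}$ and $-\eta\|\nabla J'_{\nu_k}\|^2_{\nu_k}$, while the mini-batch averaging keeps the residual of order $\mathfrak{C}(1+R_0^4)(\alpha^2+\eta^2)$ (its stochastic part being additionally damped by $1/m_k$).

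Plugging $\alpha=\eta=1/\sqrt{K}$ and $m_k=\sqrt{K}$, each one-step residual is $O((1+R_0^4)/K)$; summing from $k=0$ to $K-1$, using $J(\nu_K)\ge 0$ and dividing by $K\alpha=\sqrt{K}$ yields exactly
$$\frac{1}{K}\sum_{k=0}^{K-1}\mathbb{E}\bigl[\|J'_{\nu_k}\|^2_{\nu_k}+\|\nabla J'_{\nu_k}\|^2_{\nu_k}\bigr]\le\frac{J(\nu_0)+\mathfrak{C}(1+R_0^4)}{\sqrt{K}},$$
which is the announced bound after rewriting the left-hand side through the uniform random index $\tau_K$. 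The main obstacle is the simultaneous bookkeeping of the two non-Euclidean updates: the exponential weight step has to be Taylor-expanded uniformly in $(j,k)$, which is exactly what the constraint $\alpha C_1(R_0+1)<1$ is designed for via Lemma~\ref{lem:boundJprime}; and the projection on $\rad$ precludes a naïve Euclidean expansion and forces one to go through the first-order characterisation of $\pi_\rad$, all the while ensuring that the quadratic residual does not exceed the advertised $(1+R_0^4)$ scaling---a balancing act that relies in a crucial way on Proposition~\ref{prop:TV_nuk} to cap the weight-mass factor and on the $C^2$ regularity to cap the two Lipschitz constants appearing in the second-order expansion.
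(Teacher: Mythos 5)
Your proposal follows the paper's proof essentially step for step: a one-step expansion $J(\nu_{k+1})-J(\nu_k)=\int J'_{\nu_k}\,d(\nu_{k+1}-\nu_k)+\tfrac12\|\Phi(\nu_{k+1}-\nu_k)\|^2_{\bbH}$, linearization of the exponential weight update under $\alpha\mathcal C_1(R_0+1)<1$ (the paper packages this as Proposition~\ref{prop:hoeffding}), the variational inequality for the projected position step, conditional expectation with the unbiasedness of $\widehat{\mJ'_{\nu_k}}$ and $\widehat{\mD_{\nu_k}}$, a residual of order $(1+R_0^4)(\alpha^2+\eta^2+\eta/m_k)$ controlled by Propositions~\ref{prop:TV_nuk} and~\ref{prop:bornes_techniques_J}, and a telescoping sum.

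One step in your displayed inequality is stated too loosely, though. Because of the projection, the position increment is $\vt_j^{k+1}-\vt_j^k=-\eta P_{\cX}(\vt_j^k,\widehat{\mD_{\nu_k}}(\vt_j^k),\eta)$, so the first-order position term is $-\eta\,\omega_j^k\langle\nabla J'_{\nu_k}(\vt_j^k),P_{\cX}(\vt_j^k,\widehat{\mD_{\nu_k}}(\vt_j^k),\eta)\rangle$ and \emph{not} the linear-in-$\widehat{\mD_{\nu_k}}$ quantity $-\eta\,\omega_j^k\langle\nabla J'_{\nu_k}(\vt_j^k),\widehat{\mD_{\nu_k}}(\vt_j^k)\rangle$ you wrote; since $d\mapsto P_{\cX}(\vt,d,\eta)$ is nonlinear, unbiasedness of $\widehat{\mD_{\nu_k}}$ does not by itself convert it into $-\eta\|\nabla J'_{\nu_k}\|^2_{\nu_k}$. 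The paper repairs this by adding and subtracting $P_{\cX}(\vt_j^k,\nabla J'_{\nu_k}(\vt_j^k),\eta)$, invoking the correlation bound $\langle d,P_{\cX}(\vt,d,\eta)\rangle\ge\|P_{\cX}(\vt,d,\eta)\|^2$ together with the $1$-Lipschitz continuity of $P_{\cX}$ in its direction argument (Lemmas~\ref{lem:Ghadimi-Lan-Zhang} and~\ref{lem:Ghadimi-Lan-Zhang2}), which leaves an extra variance term $\eta\,\omega_j^k\,\mathbb{E}[\|\widehat{\mD_{\nu_k}}(\vt_j^k)-\nabla J'_{\nu_k}(\vt_j^k)\|^2\mid\mathfrak F_k]=O(\eta/m_k)$ — this is precisely where the choice $m_k=\sqrt K$ enters. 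You correctly anticipate this obstacle in your closing paragraph, so the route is the same; only the intermediate display needs this correction.
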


\medskip

\noindent
Even though weaker than Theorem \ref{theo:global_sto}, the previous ``local'' result deserves several comments as it raises some meaningful and challenging questions.

\begin{remark} 
The previous result is a strong indicator of the convergence of our swarm stochastic particle algorithms towards a minimizer of $J$. 
Indeed, Proposition \ref{prop:first_order} stated in Appendix \ref{sec:gradients} states that any minimizer $\mu^{\star}$ of~$J$ necessarily satisfies $J'_{\mu^{\star}}=0$ on the support of $\mu^{\star}$ and $J'_{\mu^{\star}} \ge 0$ everywhere, which implicitly means that $\nabla J'_{\mu^\star}=0$ on the support of $\mu^\star$ otherwise the previous positivity condition would not hold. In Theorem \ref{theo:local_sto}, we obtain that 
$\|J'_{\nuk}\|^2_{\nu_{\tau_K}}$ and $\|\nabla J'_{\nuk}\|^2_{\nu_{\tau_K}}$ become arbitrarily small when $k$ becomes larger and larger, which is perfectly in accordance with the previous conditions. Nevertheless, the nature of our algorithm does not permit to push further our conclusions, especially about the positivity of $J'_{\nuk}$ \textit{everywhere} (and not only on the support of $\nuk$). In particular, to extend our conclusion towards a global minimization result, we need to be able to address a kind of ``density of the support'', which is absolutely unattainable in our present analysis without multiplying the number of particles all over the state space, which is in some sense what is done in Theorem \ref{theo:global_sto}.
{ Our convergence statements are expressed in terms of objective values $J(\nu_k)-J(\mu^\star)$ (Theorem~1.2) or derivative-based quantities such as $|J'(\nu_k)|^2+|\nabla J'(\nu_k)|^2$ (Theorem~1.3). Earlier works (\cite{chizat2022sparse,de2021supermix}) reported convergence in terms of proximity between $\nu_k$ and $\mu^\star$, which requires stronger local assumptions (``strong source conditions'') that yield a form of local strong convexity for $J$. Establishing analogous results in the stochastic setting (convergence in a measure divergence) is an interesting direction for future research.}
\end{remark}
{
\begin{remark}
The bound \eqref{eq:descente_J} displayed in the proof of Theorem \ref{theo:local_sto} can be related to the descent property provided in deterministic case (see \cite{chizat2022sparse}, Lemma 2.5) and holds for a wide range a possible values for $\alpha$ and $\beta$. These parameters are then specifically calibrated to obtain the local result displayed in Theorem~\ref{theo:local_sto}. 
\end{remark}}
{
\begin{remark}
 The stochastic setting yields the rate $K^{-1/2}$ in Theorem~1.3, which is minimax optimal for stochastic convex problems. Deterministic CPGD can achieve faster rates (even linear convergence under strong assumptions), as discussed in Chizat~(2022).
\end{remark}}

\section{Rationale of a Stochastic CPGD algorithm. }
\label{s:algo}

The primary objective of this contribution is to introduce a stochastic algorithm designed for tackling the optimization Program \eqref{def:Blasso}, followed by an exploration of its underlying theoretical properties. Our approach initially stems from a deterministic algorithm, which is subsequently adapted into a stochastic variant to enhance computational efficiency. Considering Program \eqref{def:Blasso} as an optimization challenge, it can be effectively addressed through the application of gradient descent (GD) techniques within the domain of measures. A straightforward algorithm would entail performing a discretized gradient descent on the space of non-negative measures $\measet_+$. However, in the absence of a significant conceptual breakthrough pertaining to an efficient parametrization of the preceding iteration within $\measet_+$ (or its dual spaces and Hilbert basis), we resort to emulating this gradient descent process through a collection of measures encoded with particles. This method of approximating optimization over measures through particles finds its conceptual foundation in swarm optimization approaches, as exemplified in recent works such as those of \cite{BolteMiclo} and \cite{MicloSolo}.

\subsection{A mirror principled conic gradient descent}
\subsubsection{The Mirror descent principle}
We first introduce a basic ingredient related to optimization problems on geometric spaces, that permits to adapt the evolution of an algorithm to some constrained sets where the  problem is embedded. Mirror Descent (MD below) originates from the pioneering work of \cite{NemirovskiYudin} and permits to naturally handle optimization problems especially when the mirror/proximal mapping is explicit, which is
indeed the case for a convex problem constrained on measures as $\measet_+$
(see {\it e.g.}, \cite{lan2012validation}, \cite{bubeck2015convex}). 

\medskip

Consider a strongly convex function $h$ {on the convex set} $\mathds{R}_+^{p}\times\cX^p$, we define the Bregman divergence associated to $h$ as follows: for two pairs $ (\weights_1,\pos_1) $ and $(\weights_2,\pos_2)$  in  $\mathds{R}_+^{p}\times\cX^p$, we denote:
\begin{equation}
    \label{def:Bregman}
    D_h( (\weights_1,\pos_1) , (\weights_2,\pos_2)) 
    = h (\weights_1,\pos_1) - h (\weights_2,\pos_2) - \langle \nabla h( \weights_2,\pos_2), (\weights_1,\pos_1) - (\weights_2,\pos_2) \rangle\,.
\end{equation}
The Bregman divergence $D_h$ is then used to define the MD with the following variational characterisation
\begin{equation}
\label{def:md}
    (\weights^{k+1},\pos^{k+1}) 
    = \arg\min_{(\weights,\pos)} 
    \Big\{
        \big\langle \nabla F(\weights^{k},\pos^{k}),(\weights,\pos)-(\weights^{k},\pos^{k})\big\rangle 
        + 
        \frac{1}{\kappa} D_h\left((\weights,\pos),(\weights^{k+1},\pos^{k+1})\right)
    \Big\}\,,
\end{equation}
where $\kappa>0$ is a gradient step size. 
\subsubsection{A conic descent}
In this contribution, we will consider the entropy function $\mathrm{Ent}$ on $\{\mathds{R}_+\}^{p}$ as:
\begin{equation}
\label{def:h}
    \mathrm{Ent}(\weights) 
    := \sum_{j=1}^p \omega_j \log(\omega_j)  
\end{equation}
It induces the Bregman divergence defined on the set of positive weights as
\begin{equation}
\label{def:bd}
    D_{\mathrm{Ent}}(\weights^1 , \weights^2 ) 
    = 
    \sum_{j=1}^p \omega^1_j 
    \Bigg( 
        \frac{\omega^2_j}{\omega^1_j}-1-\log \frac{\omega^2_j}{\omega^1_j}
    \Bigg)\,.  
\end{equation}
We then define the global divergence on the set $\mathds{R}_+^{p}\times \cX^p$, as
\begin{align}
   \label{def:Delta}
    \Delta_{\alpha,\eta}( (\weights^1,\pos^1) , (\weights^2,\pos^2)) 
        &:= \frac{1}{\alpha} D_{\mathrm{Ent}}(\weights^1 , \weights^2 ) + \frac{1}{\eta} D_{\mathrm{Conic}}( (\weights^1,\pos^1) , (\weights^2,\pos^2))\,,
\end{align}
where 
\begin{align}
    \label{def:Delta_conic}
    D_{\mathrm{Conic}}( (\weights^1,\pos^1) , (\weights^2,\pos^2))
        &:={\frac{1}{2}}\sum_{j=1}^p {\omega^{2}_j} \|\vt^1_j-\vt^2_j\|^2\,.
\end{align}
In Equation \eqref{def:Delta} above, the parameter $\alpha>0$ is the gradient step size for the weights update and $\eta>0$ for the positions updates. {Without loss of generality, we will consider from now on that $\kappa =1$ as the divergence~$\Delta_{\alpha,\eta}$ already incorporates gradients step sizes $\alpha$ and $\eta$}. This global divergence $\Delta_{\alpha,\eta}$ is used to compute the gradient descent updates thanks to the variational formulation~\eqref{def:md}. We incorporate the term $D_{\mathrm{Conic}}$ with the specific intention of aligning it with the gradient updates associated with Conic Particle Gradient Descent (CPGD), as presented for instance by \cite[Section 2.2]{chizat2022sparse}. 

\begin{remark}[The conic metric...]
We recall that in the CPGD framework \cite[Section 2.2]{chizat2022sparse}, the set of particles $(\omega,\vt)\in\bbR_+\times\cX$ is equipped with the Riemannian metric defined by $(1/2)\nabla_\omega^2+  (\omega/2)\| \nabla_\vt\|^2$ where $(\nabla_\omega,\nabla_\vt)\in\mathbb R\times T_{(\omega,\vt)}(\cX)$ is a tangent vector at point $(\omega,\vt)$. As displayed below, we recognize a ‘‘conic'' metric where two given fixed points~$\vt_1,\vt_2$ of $\cX$ gets linearly closer as $\sqrt{\omega}$ goes to zero.
\begin{center}
  \begin{tikzpicture}[scale=0.7,transform shape]
  \def\height{-3}
  \def\semimajoraxis{3.5}
  \def\semiminoraxis{1}
  \draw (0,0) ellipse (\semimajoraxis cm and \semiminoraxis cm);
  \draw[-latex] (0,0) -- (0,2);
  \foreach \theta in {-19,199}
    \draw (\theta:\semimajoraxis cm and \semiminoraxis cm) -- (0,\height);
    \draw[dashed] ({\semimajoraxis*cos(10)}, {\semiminoraxis*sin(10)}) -- (0,\height);
    \draw[dashed] ({\semimajoraxis*cos(30)}, {\semiminoraxis*sin(30)}) -- (0,\height);
  \draw[dashed] (0,0) -- (0,\height);
  \node[right] at (0,1.5) {$\sqrt{\omega}$};
  \draw[-latex] (0,0) -- ({\semimajoraxis*cos(30)}, {\semiminoraxis*sin(30)});
    \node[above right] at ({\semimajoraxis*cos(30)}, {\semiminoraxis*sin(30)}) {$\vt_1$};

  \draw[-latex] (0,0) -- ({\semimajoraxis*cos(10)}, {\semiminoraxis*sin(10)});
    \node[right] at ({\semimajoraxis*cos(10)}, {\semiminoraxis*sin(10)}) {$\vt_2$};
  
\end{tikzpicture}  
\end{center}
Then a mirror retraction is applied to $\omega$ (see \cite{chizat2022sparse}[Definition 2.3]), corresponding to the Bregman divergence term $D_{\mathrm{Ent}}$ in our variational formulation. We notice that the term $D_{\mathrm{Conic}}$ comes from the latter conic metric.
\end{remark} 

\begin{remark}[...is not a Bregman divergence]
It is noteworthy that $D_{\mathrm{Conic}}$ does not conform to the definition of a Bregman divergence, as outlined in Definition \eqref{def:Bregman}. Specifically, there exists no global function $h$ such that $D_{\mathrm{Conic}}$ can be expressed in the form of $D_h$. This is evident, for instance, by considering the boundedness of Bregman balls, a property that is conspicuously absent in the case of $D_{\mathrm{Conic}}$. Furthermore, a direct proof of this assertion can be established through a contradiction argument. If such a function $h$ were to exist for $p=1$, it would imply the following relationship
\[
h(\omega_1,\vt_1)-h(\omega_2,\vt_2)-\langle \nabla h(\omega_2,t_2),(\omega_1-\omega_2,\vt_1-\vt_2))= \omega_1 \|\vt_1-\vt_2\|^2\,.
\]
Then, consider $\vt_2=0$ and observe that necessarily $h(\omega_1,\vt_1) = h(0,0)+\omega_1 \|t_1\|^2$, by removing the linear part that is necessarily vanishing. Subsequent straightforward calculations would lead to a contradiction, thereby confirming the incompatibility of $D_{\mathrm{Conic}}$ with the Bregman divergence framework.
\end{remark} 

According to the above remark, our descent algorithm should rather be understood  as a Riemannian (stochastic) gradient descent instead of a purely mirror descent. We will keep the MD abuse of terms in what follows as it refers essentially to the evolution of the weights and since it is the commonly used term in machine learning with exponentially parametrized weights.  Nevertheless, the difference between~$\Delta_{\alpha,\eta}$ in~\eqref{def:Delta} and a Bregman divergence prevents the use of standard arguments of convergence for mirror descent algorithm.

{\subsection{Projected Stochastic conic particle gradient descent (\texttt{FastPart})} \label{sec:fastpartpro}}

With all these essential components in place, we are now prepared to construct our algorithm. The fundamental concept behind this approach is to substitute the deterministic gradient $\nabla F$ of $F$ in the mirror descent~\eqref{def:md} with its stochastic counterpart, as derived from the stochastic gradients on weights~\eqref{eq:J'sto} and positions~\eqref{eq:def_D}  under Assumptions \eqref{A1} and \eqref{A2}.\\

\paragraph{Mirror descent updates} 

{An important remark for the tractability of the stochastic CPGD is that Equation~\eqref{def:md} may be made explicit. In particular, the optimization with respect to  $\weights$ can be carried out independently of $\pos$. This leads to the updates
\begin{equation}
\weights^{k+1} = \arg\min_{\weights}  \left\{ \left\langle \weights - \weights^k,
    (\widehat{\mJ'_{\nu_k}}(\vt_j^k))_{j=1\dots p}\right\rangle +\frac{1}{ \alpha} D_{Ent}(\weights,\weights^k)\right\}
\label{eq:Woptim}
\end{equation}
and 
\begin{equation}
\pos^{k+1} = \arg\min_{\pos}
\left\{ \left\langle \pos - \pos^k,
 (\omega_j^k \widehat{\mD_{\nu_k}}(\vt_j^k))_{j=1\dots p}\right\rangle +\frac{1}{\eta} D_{Conic}((\weights,\pos),(\weights^k,\pos^k))\right\}.
 \label{eq:Toptim}
\end{equation}
which gives the expressions \eqref{eq:up_w} and \eqref{eq:update_pos_sto} of the algorithm.
}

\paragraph{Proximal methods guarantees}
{Analysis of Algorithm \ref{algo:SCPGD_pro} will be addressed by using some standard tools of proximal methods, and in particular the \textit{generalized projected gradient}. We briefly sketch these tools below.
For any $\vt \in \cX$, any "direction'' $d$ and any step-size $\eta$, we introduce:
$$
\vt^{+} := \arg\min_{u \in\cX} \left\{ \langle u,d\rangle + \frac{1}{2 \eta} \|u-\vt\|^2\right\}.
$$
The generalized projected gradient associated to a direction $d$ is then defined as
$$
P_{\cX}(\vt,d,\eta) := \frac{\vt-\vt^+}{\eta}\,,
$$
which depends implicitly on $d$ through $t^+$ as we also have that $d-P_{\cX}(\vt,d,\eta)$ is a vector of the normal cone of $\rad$ at point $t^+$. Hence, leading to the key identity
\begin{equation}
    \label{eq:projected_gradient}
    \vt^+ = \vt - \eta P_{\cX}(\vt,d,\eta).
\end{equation}
Some useful properties of this generalized projected gradient can be found for instance in \cite{Ghadimi-Lan-Zhang}. In particular Lemma 1 of \cite{Ghadimi-Lan-Zhang} may be stated as follows:
\begin{lemma}[Correlation of the projected gradient and gradient lower bound, \cite{Ghadimi-Lan-Zhang}\label{lem:Ghadimi-Lan-Zhang}]
    For any $\vt_j \in \cX$, $d \in \bbR^d$ and $\eta>0$:
    $$
    \langle  d, P_{\cX}(\vt_j,d,\eta) \rangle \ge \|P_{\cX}(\vt_j,d,\eta)\|^2.$$
\end{lemma}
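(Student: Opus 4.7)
The plan is to argue directly from the variational definition of $\vt^+$ via a first-order optimality condition, which is the standard strategy for this kind of proximal inequality. Since $\cX$ is closed and convex and the objective $u\mapsto \langle u,d\rangle+\frac{1}{2\eta}\|u-\vt_j\|^2$ is strongly convex, the minimizer $\vt^+$ exists, is unique, and is characterized by the variational inequality
\[
\Big\langle d+\frac{1}{\eta}(\vt^+-\vt_j),\,u-\vt^+\Big\rangle\ge 0\qquad\forall u\in\cX.
\]
This captures exactly that $d-P_{\cX}(\vt_j,d,\eta)$ lies in the normal cone to $\cX$ at $\vt^+$, which is the geometric fact mentioned just before the lemma statement.

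The next step is to instantiate the variational inequality at the admissible point $u=\vt_j\in\cX$. This yields
\[
\langle d,\vt_j-\vt^+\rangle\ge \frac{1}{\eta}\langle \vt_j-\vt^+,\vt_j-\vt^+\rangle=\frac{1}{\eta}\|\vt_j-\vt^+\|^2.
\]
Using the identity \eqref{eq:projected_gradient}, namely $\vt_j-\vt^+=\eta\,P_{\cX}(\vt_j,d,\eta)$, both sides rescale cleanly: the left side becomes $\eta\langle d,P_{\cX}(\vt_j,d,\eta)\rangle$ and the right side becomes $\eta\,\|P_{\cX}(\vt_j,d,\eta)\|^2$. Dividing by $\eta>0$ gives the claimed inequality.

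There is essentially no analytic obstacle here; the only subtlety is to make sure the projection point $\vt_j$ itself is feasible (so that we may test the variational inequality against it), which is immediate from $\vt_j\in\cX$ by hypothesis. The proof is therefore short and self-contained, and mirrors the argument of \cite{Ghadimi-Lan-Zhang} adapted to our notation $P_{\cX}(\vt_j,d,\eta)=(\vt_j-\vt^+)/\eta$.
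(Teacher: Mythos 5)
Your proof is correct: the first-order optimality condition of the strongly convex prox subproblem, tested at the feasible point $u=\vt_j$ and rescaled via $\vt_j-\vt^+=\eta\,P_{\cX}(\vt_j,d,\eta)$, gives exactly the claimed inequality. The paper does not reprove this lemma (it simply cites Lemma~1 of \cite{Ghadimi-Lan-Zhang}), and your argument is precisely the standard one from that reference, so there is nothing to add.
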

\noindent A second key property is the generalization of the $1$-Lipschitz inequality for projection, which is obvious in our framework here:
\begin{lemma}[Lipschitz continuity of generalized projected gradients \cite{Ghadimi-Lan-Zhang}]\label{lem:Ghadimi-Lan-Zhang2}
    For any $\vt_j \in \cX$, $(d_1,d_2) \in \bbR^d$ and $\eta>0$:
    $$
    \|P_{\cX}(\vt,d_1,\eta) -P_{\cX}(\vt,d_2,\eta) \|  \le \|d_1-d_2\|.$$
\end{lemma}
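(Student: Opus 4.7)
The plan is to first rewrite $\vt^+$ as a Euclidean projection. Completing the square in $u$ inside the minimization that defines $\vt^+$ gives, up to an additive constant independent of $u$,
\begin{equation*}
\vt^+ = \arg\min_{u\in\cX}\frac{1}{2\eta}\|u-(\vt-\eta d)\|^2 = \pi_{\cX}(\vt-\eta d),
\end{equation*}
where $\pi_{\cX}$ denotes the Euclidean projection onto the closed convex set $\cX=\bar B(0,R_{\mathcal X})$. Consequently, the generalized projected gradient admits the compact closed form
\begin{equation*}
P_{\cX}(\vt,d,\eta) = \frac{\vt - \pi_{\cX}(\vt-\eta d)}{\eta}.
\end{equation*}

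Plugging in two directions $d_1,d_2\in\bbR^d$ and subtracting the two resulting expressions, the common term $\vt/\eta$ cancels and one gets
\begin{equation*}
P_{\cX}(\vt,d_1,\eta) - P_{\cX}(\vt,d_2,\eta) = \frac{\pi_{\cX}(\vt-\eta d_2) - \pi_{\cX}(\vt-\eta d_1)}{\eta}.
\end{equation*}

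The conclusion then follows from the classical $1$-Lipschitz (non-expansiveness) property of the Euclidean projection onto a nonempty closed convex set. This property is itself an immediate consequence of the obtuse-angle characterization $\langle u-\pi_{\cX}(u), v-\pi_{\cX}(u)\rangle\le 0$ for all $v\in\cX$: applied with the two points $u=\vt-\eta d_1$ and $u=\vt-\eta d_2$, summing the two inequalities and using Cauchy--Schwarz yields
\begin{equation*}
\|\pi_{\cX}(\vt-\eta d_1)-\pi_{\cX}(\vt-\eta d_2)\| \le \|(\vt-\eta d_1)-(\vt-\eta d_2)\| = \eta\,\|d_1-d_2\|.
\end{equation*}
Dividing by $\eta$ produces exactly the required bound.

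I expect no substantive obstacle: this is a textbook fact from \cite{Ghadimi-Lan-Zhang}, and the only care required is the translation of the regularized linear minimization defining $\vt^+$ into a projection step, after which non-expansiveness of $\pi_{\cX}$ closes the argument. Note that the proof does not use the specific ball structure of $\cX$; it works verbatim for any nonempty closed convex subset of $\bbR^d$.
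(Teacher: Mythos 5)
Your proof is correct: the paper itself gives no argument for this lemma (it simply cites \cite{Ghadimi-Lan-Zhang} and calls the property ``obvious in our framework''), and your reduction of $P_{\cX}(\vt,d,\eta)$ to $\bigl(\vt-\pi_{\cX}(\vt-\eta d)\bigr)/\eta$ followed by the non-expansiveness of the Euclidean projection onto a closed convex set is precisely the standard reasoning the authors have in mind. No gaps.
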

\noindent
Using \eqref{eq:optim_t_projgrad} and \eqref{eq:update_pos_sto}, we have, for any $k\in \mathbb{N}$ and $j\in \lbrace 1,\dots, p \rbrace$
\begin{equation}
\vt_j^{k+1} - \vt_j^k = -\eta P_\rad(\vt_j^k, \widehat{\mD_{\nu_k}}(\vt_j^k),\eta).
\label{eq:update_generalized_gradient}
\end{equation}
A direct application of Lemma \ref{lem:Ghadimi-Lan-Zhang2} (setting $\vt=\vt_j^k$, $d_1= \widehat{\mD_{\nu_k}}(\vt_j^k)$ and $d_2 = 0$) then leads to 
\begin{equation}
\| \vt_j^{k+1} - \vt_j^k \| \leq \eta \| \widehat{\mD_{\nu_k}}(\vt_j^k) \|.
\label{eq:control_increment_pos}
\end{equation}
Despite the projection step over $\mathcal{X}$, inequality \eqref{eq:control_increment_pos} allows to connect the distance between $\vt_j^{k+1}$ and  $\vt_j^k$ to the norm of the gradient $\widehat{\mD_{\nu_k}}(\vt_j^k)$. This inequality will be a cornerstone in our analysis. 
}

\section{Some examples from Unsupervised learning and Signal processing}
\label{s:examples}
\subsection{Mixture Models (GMM)}
\label{sec:deconvolution_mixture}
\subsubsection{Introduction}
For the sake of clarity, we discuss briefly in this section the specific case of statistical mixture models. They are a class of statistical models that can be used for various purposes such as inference, testing, and modelling, have garnered significant attention in recent years due to their versatility and simplicity. However, the estimation of mixture models remains a complex task, with many aspects of the process not yet fully understood. The Expectation-Maximization (E.M.) algorithm, introduced by \cite{EM}, and its subsequent generalization to stochastic variants in \cite{EM-Sto}, have played a crucial role in the development of M.M.. Notably, the E.M. algorithm has been reinterpreted on exponential families as a descent algorithm with a surrogate in \cite{EM-Mirror}, which has led to a renewed interest in M.M. within the machine learning and optimization communities. Our work is also related to preliminary experiments conducted in \cite{de2021supermix}, which employed a deterministic version of particle gradient descent. These experiments demonstrated the potential of using such methods in the context of M.M., and have inspired further research in this area. While M.M. may appear straightforward at first glance, their estimation poses significant challenges. However, recent advances in the field, including the reinterpretation of the EM algorithm and the use of descent algorithms with surrogates, have reignited interest in these models and their potential applications.

In this setting, the data $\mX=(x_1,\dots, x_N)$ are i.i.d. random variables having a density $\rho$ in $\mathds{R}^d$ (w.r.t. the Lebesgue measure) verifying:
\[
    \rho 
    = \sigma \star \bar\mu = \sum_{j=1}^{\bar s} {\bar\omega}_j \sigma_{\bar\vt_j} 
    \text{ with } 
    \bar\mu := \sum_{j=1}^{\bar s} {\bar\omega}_j \delta_{\bar\vt_j}\,,
\]
where $\star$ denotes the convolution product, $\bar \mu$ is an \textit{unknown} mixing distribution, $\sigma$ is a \textit{known} bounded even probability density function on $\mathds{R}^d$ ({\it e.g.,} a Gaussian) and we denote by $\sigma_\vt(\cdot):=\sigma(\vt-\cdot)$. The goal is to recover the target $\bar \mu$ and/or the corresponding weights $\overline{\weights} := (\bar\omega_1,\ldots, \bar\omega_{\bar s})$ and positions $\overline{\pos} := (\bar\vt_1,\ldots, \bar \vt_{\bar s})$.

\subsubsection{Model specification}
We consider the Hilbert space $\mathds{H}$ defined as the RKHS associated to the Gaussian kernel, denoted by $\gamma_m$, with variance parameter $m^2 \mathrm{Id}$ and leading to 
\[
\mathds{H} :=  
    \Big\lbrace 
    f:\mathds{R}^d \rightarrow \mathds{R} \ : \ \| f\|_\mathds{H}^2 = \frac{1}{(2\pi)^d}\int_{\mathds{R}^d} 
    \frac
    {| \mathcal{F}[f](\vt)|^2}
    {\mathcal{F}[\gamma_m](\vt)} 
    \mathrm d\vt <+\infty   
    \Big\rbrace  \quad
    \text{with} \quad {\mathcal{F}[\gamma_m]}=\exp\big(-\frac{m^2}{2}\|\cdot\|_2^2\big)\,,
\]
where $\mathcal{F}$ is the Fourier transform defined as
\[
\forall \vu \in \mathds{R}^d\,, \quad 
\forall f \in \mathds{H}\,, \qquad \mathcal{F}[f](\vu) := \int_{\mathds{R}^d} f(\vt) e^{-\ii \langle \vu,\vt\rangle} \mathrm d \vt,
\]
and $\ii$ refers to the complex number. The inner product associated to the space $\mathds{H}$ hence verifies
\begin{equation} 
\forall f,g\in \mathds{H}\,,\qquad
    \langle f,g \rangle_\mathds{H} = \frac{1}{(2\pi)^d}
            \mathcal{R}\mathrm{e}\Big(\int_{\R^d} \frac{\mathcal{F}[f](\vt) \times \overline{\mathcal{F}[g](\vt)}}{\mathcal{F}[\gamma_m](\vt)} \mathrm d\vt 
\Big)\,,
\label{eq:scal}
\end{equation} 
where $\mathcal{R}\mathrm{e}(z)$ denotes the real part of a complex number $z$. We embed the sample $\mX$ and the density $\rho$ in the same Hilbert space $\mathds H$ taking a convolution with $\gamma_m$. It yields  
\[
    \vy =  \frac 1N \sum_{i=1}^N \gamma_m (x_i - \cdot)
    \quad
    \text{and}
    \quad
    \bar \vy :=\mathds E_\mX[\vy]=(\gamma_m\star \sigma)\star \bar\mu=\sum_{j=1}^{\bar s} {\bar\omega}_j(\gamma_m\star \sigma)(\bar\vt_j-\cdot)\,.
\]

\medskip

We uncover that the feature map~\eqref{eq:sparse solution} and the measure embedding~\eqref{def:Phi} are given by 
\[
\varphi_\vt(\cdot):=(\gamma_m\star \sigma)(\vt-\cdot)
\quad
\text{and}
\quad
\Phi(\mu):=(\gamma_m\star \sigma)\star \mu\,.
\] 
Note that the observation $\vy\in\mathds H$ corresponds to the non-parametric kernel estimator of $\bar\vy=\Phi(\bar\mu)$ based of the sample $\mX$. We also have the following kernel expression matching the expression~\eqref{eq:approx_kernel_convolution}
\begin{equation}
\label{eq:kernel_MM_convolution}
\Kernel(\vt,\vt')=\langle\varphi_\vt,\varphi_{\vt'}\rangle_\mathds{H}=(\tilde k\star \sigma)(\vt-\vt')
\quad \text{where}
\quad
\tilde k(\vt):=(\gamma_m\star \sigma)(\vt)\,,
\end{equation}
using the Fourier inversion formula. Last but not least, we can derive the following expression
\[
\langle\varphi_\vt,\vy\rangle_\mathds{H}=\frac 1N \sum_{i=1}^N \tilde k (x_i-\vt)=\vy(\vt)\,,
\]
by the same means. For the sake of simplicity, we will omit the dependency with the standard deviation $m$ as we are only interested in the optimization problem in the sequel.

\subsubsection{Gradients}
For any $\mu \in \mathcal{M}_+(\mathds{R}^d)$, the observation $\vy$ is compared to $\Phi(\mu) = (\gamma \star\sigma)\star\mu$ inside the criterion~$J(\mu)$. By~\eqref{eq:def_J_prime}, one has
\begin{equation}
    J_\nu'(\vt)
        = \langle \varphi_\vt, \Phi(\nu)  \rangle_\mathds{H} - \langle \varphi_\vt, \vy \rangle_\mathds{H} + \lambda
        = \sum_{j=1}^p \omega_j \langle \varphi_\vt, \varphi_{\vt_j}  \rangle_\mathds{H} - \langle \varphi_\vt, \vy \rangle_\mathds{H} + \lambda\,,
\label{eq:mixt1}
\end{equation}
which can be re-written as 
\begin{eqnarray}
    \label{eq:Jint}
    J_\nu'(\vt)= 
\sum_{j=1}^p \omega_j \int_{\R^d} \tilde k(\vt-\vt_j-\vu)\sigma(\vu)\mathrm{d}\vu 
- 
\frac 1N \sum_{i=1}^N \tilde k (x_i-\vt) + \lambda\,,
\end{eqnarray}
following~\eqref{eq:approx_kernel_convolution}.

\subsubsection{Assumptions}
We can identify the $\mathbf g$ and $\mathbf h$ functions appearing in Assumptions~\eqref{A1} and~\eqref{A2}. It holds 
\begin{align}
    \mathbf g_{\vt,\vt'}(\vu)&:=\tilde k(\vt-\vt'-\vu)\notag \\
    \mathbf h_\vt(\vv)&:=\tilde k (\vt-\vv)\,,\notag
\end{align}
and the laws of $U,V$ are independently distributed according to $U\sim \sigma$ and $V$ picking a data sample at random as we will see below. Now, observe that $\tilde k$ is bounded as $\gamma$ is, hence both functions are bounded and Assumption~\eqref{A1} is satisfied. Also 
\begin{align}
    \nabla_\vt\mathbf g_{\vt,\vt'}(\vu)&=\nabla\tilde k(\vt-\vt'-\vu)\notag \\
    \nabla_\vt\mathbf h_\vt(\vv)&=\nabla\tilde k (\vt-\vv)\,,\notag
\end{align}
and since $\gamma$ has bounded gradient and that the convolution is a contraction for the sup norm, it holds that Assumption~\eqref{A2} is satisfied. Finally, it is not hard to prove that Assumption~\eqref{ass:kernel_continuity} is satisfied since the kernel is a convolution by a smooth function.

\subsubsection{Stochastic counterpart}
Remark from \eqref{eq:Jint} that $J'_{\nu}$ is obtained through some integral computations. Given $\nu=\nu(\weights,\pos)$, we introduce a random variable $Z=(T,U,V)$ built as follows. Consider three independent random variables given by
\begin{equation} 
T \sim \frac{\nu}{\|\nu\|_{\mathrm{TV}}}\,,\quad 
U \sim \sigma\,,\quad 
\text{and}\quad V\sim \frac{1}{N} \sum_{i=1}^N \delta_{X_i}\,,
\label{eq:mixt2}
\end{equation}
where $\nu$ is assumed to be a discrete non-negative measure as displayed in Algorithm~\ref{algo:SCPGD_pro}, hence $\nu/\|\nu\|_{\mathrm{TV}}$ is a discrete probability measure. We have
\begin{align}
\mJ'_{\nu,\vt}(Z) 
&= 
\|\nu\|_{\mathrm{TV}}\tilde k(\vt-T-U)  
- \tilde k(\vt-V)
+\lambda\,,\\
&= \|\nu\|_{\mathrm{TV}}\mathbf g_{\vt,T}(U)  
                    - \mathbf h_\vt(V)
                    +\lambda\,.
\end{align}
and we uncover~\eqref{eq:J'sto}. According to \eqref{eq:mixt1} and \eqref{eq:mixt2}, we can verify that once $\nu$ is fixed, $\mJ'_{\nu,\vt}(Z) $ is an unbiased estimation of $J'_{\nu}$ and we can observe that 
 that there exists a random variable $\xi_{\nu}(\vt,Z)$ such that 
\begin{equation}\label{eq:grad_sto1}
\mJ'_{\nu,\vt}(Z) = J'_\nu(\vt) + \xi_{\nu}(\vt,Z) \quad \mathrm{with} \quad  \mathbb{E}_Z [\xi_{\nu}(\vt,Z)]= 0\,.
\end{equation}
A same remark occurs for the gradient of $J'_{\nu}$ since for any $\vt\in \mathds{R}^d$,
\begin{equation}
\label{eq:grad_J_sto}
\nabla_\vt J'_\nu(\vt) 
= \sum_{j=1}^p \omega_j \int_{\R^d} \nabla\tilde k(\vt-\vt_j-\vu)\sigma(\vu)\mathrm{d}\vu 
- 
\frac 1N \sum_{i=1}^N \nabla\tilde k (x_i-\vt)\,.
\end{equation}
We shall then define
\begin{align}
\mD_{\nu,\vt}(Z) &=\|\nu\|_{\mathrm{TV}}\nabla\tilde k(\vt-T-U)  
                    - \nabla\tilde k(\vt-V)
                    \,,\notag\\
                 &= \|\nu\|_{\mathrm{TV}}\nabla_\vt\mathbf g_{\vt,T}(U)  
                    - \nabla_\vt\mathbf h_\vt(V)
                    \,.\notag
\end{align}
and we uncover \eqref{eq:def_D}. We get that 
\begin{equation}
\label{eq:grad_sto2} 
\mD_{\nu,\vt}(Z) =  \nabla_\vt J'_\nu(\vt) + \zeta_{\nu}(\vt,Z) \quad \mathrm{with} \quad \mathbb{E}_Z [\zeta_{\nu}(\vt,Z)]= 0,
\end{equation}
for some centred random vector $\zeta_{\nu}(t,Z)$.

Therefore, in this example of mixture deconvolution, our situation perfectly fits the stochastic gradient setting where we can easily access some unbiased random realization of the true gradient of $F$ for any position of a current algorithm $\nu=\nu(\weights,\pos)$. 

\subsection{Interlude, a comparison with sketching}
    In the field of machine learning, replacing the computation of the complex integrals present in Equation~\eqref{eq:Jint} is commonly referred to as sketching \citep{keriven2018sketching}. The key distinction between our approach and sketching-type methods lies in how the frequencies $(U_k)$ are sampled. In the sketching approach, the frequencies are initially sampled only once at the beginning of the algorithm. Consequently, the gradients~$J'_{\nu}(\vt)$ and $\nabla_\vt J'_\nu(\vt)$ are approximated using a Monte-Carlo version of~\eqref{eq:Jint} that employs the same Monte-Carlo sample $(U_k)$ along the descent and the size of $U_k$ \textit{needs to be large} to guarantee a good Monte-Carlo approximation of the several integrals involved in the iterations of the method. Therefore, the sketching strategy of \citep{keriven2018sketching} leads to costly iterations as the size of $(U_k)$ is not negligible. In contrast, our method involves sampling a unique Monte-Carlo sample $U_k$ at each step $k$, and we replace the integrals of~\eqref{eq:Jint} with an evaluation at sample~$U_k$ (or we could also adopt a mini-batch strategy). It is important to note that in our case, the number of frequencies $(U_k)$ is equal to the number of steps $K$ (or times the size of mini-batch), whereas in sketching approaches, the number of frequencies is directly proportional to the number of parameters to be estimated, up to logarithmic factors. For more information on this topic, refer to \cite{keriven2018sketching}.

\subsection{Sparse deconvolution with positive definite kernel}
\label{sec:Super-Resolution}

\subsubsection{Introduction}

The statistical analysis of the $\ell_{1}$-regularization in the space of measures was initiated by Donoho \citep{donoho1992superresolution} and then investigated by \cite{gamboa1996sets}.  Recently, this problem has attracted a lot of attention in the ``{\it Super-Resolution}'' community and its companion formulation in ``{\it Line spectral estimation}''. In the Super-Resolution frame, one aims at recovering fine scale details of an image from few low frequency measurements, ideally the observation is given by a low-pass filter. The novelty of this body of work lies in new theoretical guarantees of the $\ell_{1}$-minimization over the space of discrete measures in a gridless manner, referred to as ‘‘off-the-grid'' methods. Some recent work on this topic can be found in \cite{Bredis_Pikkarainen_13,Tang_Bhaskar_Shah_Recht_13,candes2014towards,Candes_FernandezGranda_13,FernandezGranda_13,duval2015exact,de2012exact,azais2015spike}. More precisely, pioneering work can be found in~\cite{Bredis_Pikkarainen_13}, which treats of inverse problems on the space of Radon measures and \cite{candes2014towards}, which investigates the Super-Resolution problem via Semi-Definite Programming and the ground breaking construction of a ``{\it dual certificate}''. Exact Reconstruction property (in the noiseless case), minimax prediction and localization (in the noisy case) have been performed using the ‘‘{\it Beurling Lasso}'' estimator \eqref{def:Blasso} introduced in \cite{azais2015spike} and also studied in \cite{Tang_Bhaskar_Shah_Recht_13,FernandezGranda_13,Tang_Bhaskar_Recht_15} which minimizes the total variation norm over complex Borel measures. Noise robustness (as the noise level tends to zero) has been investigated in the captivating paper~\cite{duval2015exact}. A sketching formulation and a construction of dual certificates with respect to the Fisher metric has been pioneering studied in \cite{poon2021geometry}.

\subsubsection{Model specification}
In sparse deconvolution, the convolution $\bar\vy$ of some measure $\bar\mu$ with a $\mathcal C^1$-continuous {\bf positive definite} function~$\varphi$ is given by
\[
\forall \vt\in\cX\,,\quad
\bar\vy(\vt)=\sum_{j=1}^{\bar s}\bar\omega_j \varphi(\vt-\bar\vt_j)\,,
\]
and we observe $\vy$, a noisy version of $\bar\vy$, given by $\vy=\bar\vy+\ve$, where $\ve\,:\,\cX\to\bbR$ is some function. The feature map~\eqref{eq:sparse solution} is the convolution kernel and the measure embedding~\eqref{def:Phi} are given by 
\[
\varphi_\vt(\cdot):=\varphi(\vt-\cdot)
\text{ and }\Phi(\mu):=\varphi\star\mu\,.
\] 
Recall that $\varphi$ is a positive definite function. We assume that $\varphi$ is defined on the $d$-Torus $\mathds T^d$ or on $\bbR^d$, and we further assume that $\varphi(0)=1$, without log of generality on this latter point. By Bochner's theorem, there exists a probability measure $\sigma$, referred to as the spectral measure of $\varphi$, such that 
\begin{equation}
\forall \vt,\vs \in\cX\,, \quad  
\mathcal{F}[\varphi_\vt](\vs) =  \sigma(\vs) e^{-\ii \langle \vs,\vt\rangle}\,.
\label{eq:deconvol0}
\end{equation}
We consider the following assumption.
\medskip

\noindent
\textbf{Assumption $(\mathbf{A_{\mathrm{conv}}})$.} \textit{The spectral measure $\sigma$ of $\varphi$ has compact support}.

\paragraph{Super Resolution} In this case, $\cX=\mathds T^d$ and $\sigma=\sum_{\vu\in\mathds Z^d}\sigma_\vu\delta_\vu$ is a probability measure on $\mathds Z^d$. When $\sigma$ is the uniform measure on $[-f_c,f_c]^d$, with $f_c\geq 1$, we uncover the Super-Resolution framework and $\varphi$ is the Dirichlet kernel. For latter use, we denote 
\[
\int e^{-\ii \langle \vu,\vt-\vs\rangle}\mathrm d\sigma(\vu) := \sum_{\vu\in\mathds Z^d}\sigma_\vu e^{-\ii \langle \vu,\vt-\vs\rangle}\,.
\]

\paragraph{Continuous sampling Fourier transform}  In this case, $\cX$ is a compact set of $\mathds R^d$ and $\sigma$ is a probability measure on $\R^d$. For latter use, we denote 
\[
\int e^{-\ii \langle \vu,\vt-\vs\rangle}\mathrm d\sigma(\vu) := \int_{\mathds R^d}e^{-\ii \langle \vu,\vt-\vs\rangle}\mathrm d\sigma(\vu)\,.
\]

\medskip

By Lemma~\ref{lem:non_separable}, we can assume that $\bbH$ is the RKHS associated with the kernel defined by $\varphi$. In particular, 
\begin{equation}
    \label{eq:deconvol00}
    \forall \vt\in\cX\,, \quad 
    \langle \varphi_\vt, \varphi_{\vt_j}  \rangle_\mathds{H} = \varphi_{\vt_j}(\vt)
    \text{ and }
    \langle \varphi_\vt, \vy \rangle_\mathds{H}=\vy(\vt).
\end{equation}

\paragraph{About the noise term and the regularity of the observation} Using the projection $\Pi$ and the isometry~$\mathbb{i}$ of Lemma~\ref{lem:non_separable}, we can assume without loss of generality that the noise term $\ve$ belongs to $\bbH$, the RKHS associated with $\varphi$. By Assumption~$(\mathbf{A_{\mathrm{conv}}})$, it can shown that $\varphi$ is smooth, hence all the elements of $\bbH$ are smooth and so is $\ve$. Since $\cX$ is compact, we deduce that $\vy$ and its gradient are bounded. 

\subsubsection{Gradients}
For any $\mu \in \mathcal{M}(\mathds{R}^d)$, the observation $\vy$ is compared to $\Phi(\mu) = \varphi\star\mu$ inside the criterion~$J(\mu)$. A direct application of Proposition \ref{prop:jnuprime} leads to $J_\nu' =  \Phi^\star(\Phi(\nu) - \vy) + \lambda$, and one can check that, for any $\vt\in \mathds{R}^d$, 
\[
J_\nu'(\vt) 
    = \langle \varphi_\vt, \Phi(\nu) - \vy \rangle_\mathds{H} + \lambda\,.
\] 
The latter formula can be re-written as
\begin{equation}
    J_\nu'(\vt)
        = \langle \varphi_\vt, \Phi(\nu)  \rangle_\mathds{H} - \langle \varphi_\vt, \vy \rangle_\mathds{H} + \lambda
        = \sum_{j=1}^p \omega_j \langle \varphi_\vt, \varphi_{\vt_j}  \rangle_\mathds{H} - \langle \varphi_\vt, \vy \rangle_\mathds{H} + \lambda\,.
\label{eq:deconvol1}
\end{equation}
By \eqref{eq:deconvol0} and \eqref{eq:deconvol00}, it yields that
\begin{equation}
    J_\nu'(\vt)
        = \sum_{j=1}^p \omega_j \int  e^{-\ii \langle \vu,\vt-\vt_j\rangle}\mathrm d\sigma(\vu) - \vy(\vt) + \lambda\,.
\label{eq:deconvol11}
\end{equation}

\subsubsection{Assumptions}

We can identify the $\mathbf g$ and $\mathbf h$ functions appearing in Assumptions~\eqref{A1} and~\eqref{A2}. It holds 
\begin{align}
    \mathbf g_{\vt,\vt'}(\vu)&:=e^{-\ii \langle \vu,\vt-\vt'\rangle}\notag \\
    \mathbf h_\vt&:=\vy(\vt)\,,\notag
\end{align}
which are bounded functions and Assumption~\eqref{A1} is satisfied. Using the dominated convergence theorem for the bounded gradients 
\begin{align}
    \nabla_\vt\mathbf g_{\vt,\vt'}(\vu)&:=-\ii \vu e^{-\ii \langle \vu,\vt-\vt'\rangle}\mathbf{1}_{\{\vu\in\mathrm{Supp}(\sigma)\}}\notag \\
    \nabla_\vt\mathbf h_\vt&:=\nabla\vy(\vt)\,,\notag
\end{align}
where $\mathrm{Supp}(\sigma)$ denotes the support of the spectral measure $\sigma$. One can check that Assumption~\eqref{A2} is satisfied under Assumption~$(\mathbf{A_{\mathrm{conv}}})$.

\subsubsection{Stochastic counterpart}

Given $\nu=\nu(\weights,\pos)$, we introduce a random variable $Z=(T,U)$ built as follows (there is no $V$ random variable in this case). Consider two independent random variables given by
\begin{equation} 
T \sim \frac{\nu}{\|\nu\|_{\mathrm{TV}}} 
\quad
\text{and}\quad U \sim \sigma\,,
\label{eq:dec2}
\end{equation}
where $\nu$ assumed non-negative without loss of generality as discussed in~\eqref{def:nu}, hence $\nu/\|\nu\|_{\mathrm{TV}}$ is a discrete probability measure. We then define  
\begin{align}
\mJ'_{\nu,\vt}(Z) &= \|\nu\|_{\mathrm{TV}}e^{-\ii \langle U,\vt-T\rangle}   
                    - \vy(\vt)
                    +\lambda\,,\notag\\
                 &= \|\nu\|_{\mathrm{TV}}\mathbf g_{\vt,T}(U)  
                    - \mathbf h_\vt(V)
                    +\lambda\,,\notag
\end{align}
writing, with a slight abuse of notation, $\bm h_t(V)=\bm y(t)$. We hence uncover~\eqref{eq:J'sto}. According to \eqref{eq:deconvol1} and \eqref{eq:dec2}, we can verify that once $\nu$ is fixed, $\mJ'_{\nu,\vt}(Z) $ is an unbiased estimation of $J'_{\nu}$ and we can observe that 
 that there exists a random variable $\xi_{\nu}(\vt,Z)$ such that 
\begin{equation}\label{eq:grad_sto_dec}
\mJ'_{\nu,\vt}(Z) = J'_\nu(\vt) + \xi_{\nu}(\vt,Z) \quad \mathrm{with} \quad  \mathbb{E}_Z [\xi_{\nu}(\vt,Z)]= 0\,.
\end{equation}
A same remark occurs for the gradient of $J'_{\nu}$ since for any $\vt\in \mathds{R}^d$,
\begin{equation}
\label{eq:grad_J_sto_dec}
\nabla_\vt J'_\nu(\vt) 
= - \ii \sum_{j=1}^p \omega_j \int  \vu  e^{-\ii \langle \vu,\vt-\vt_j\rangle}\mathrm d\sigma(\vu) - \nabla\vy(\vt)\,.
\end{equation}
We shall then define
\begin{align}
\mD_{\nu,\vt}(Z) &=- \ii\|\nu\|_{\mathrm{TV}} U e^{-\ii \langle \vu,\vt-T\rangle}  
                    - \nabla\vy(\vt)
                    \,,\notag\\
                 &= \|\nu\|_{\mathrm{TV}}\nabla_\vt\mathbf g_{\vt,T}(U)  
                    - \nabla_\vt\mathbf h_\vt(V)
                    \,.\notag
\end{align}
and we uncover \eqref{eq:def_D}. We get that 
\begin{equation}
\label{eq:grad_sto2_dec} 
\mD_{\nu,\vt}(Z) =  \nabla_\vt J'_\nu(\vt) + \zeta_{\nu}(\vt,Z) \quad \mathrm{with} \quad \mathbb{E}_Z [\zeta_{\nu}(\vt,Z)]= 0,
\end{equation}
for some centered random vector $\zeta_{\nu}(t,Z)$.

\section{Numerical experiments on FastPart\label{sec:experiments}}

\subsection{Experimental setup}
In this short section, we develop a brief numerical study, that may be seen as a proof of concept, to assess the efficiency of our stochastic gradient descent, when using some sketched randomized evaluations of $J'_\nu$, with the help of sampling involved in Equations \eqref{eq:Jint} and \eqref{eq:mixt2}.\footnote{Our simulations greatly benefit from the previous work of Nicolas Jouvin \url{https://nicolasjouvin.github.io/}, while the original numerical Python code is made available here \url{https://forgemia.inra.fr/njouvin/particle_blasso}.}

For this purpose, we consider the Supermix problem introduced in \cite{de2021supermix}, which is described in Section \ref{sec:deconvolution_mixture}, when considering a mixture of Gaussian densities. We consider three toy situations in 1D. 
Figure \ref{fig:synthetic_gmm_1d} represents the mixture densities considered in this study, that contains for two of them 3 components, and for the last one 5 components. 

In Section~\ref{sec:NN_CH}, we present a different setting on a real dataset in dimension $d=8$ using two layers neural network and S-CPGD.

\begin{figure}
    \centering
    \includegraphics[width=0.32\textwidth]{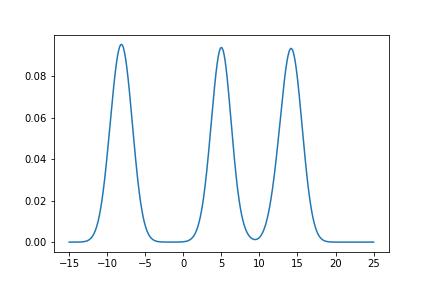}
    \includegraphics[width=0.32\textwidth]{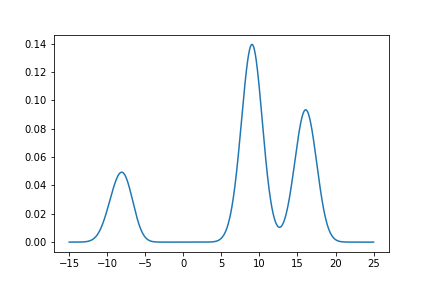}
    \includegraphics[width=0.32\textwidth]{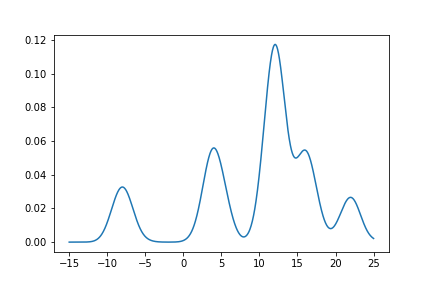}
    \includegraphics[width=0.32\textwidth]{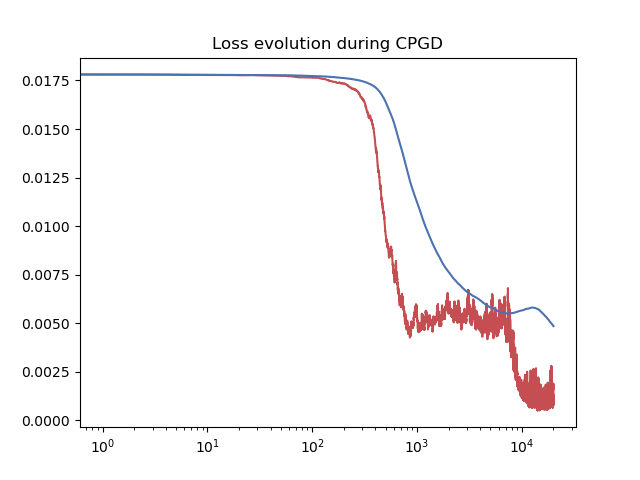}
    \includegraphics[width=0.32\textwidth]{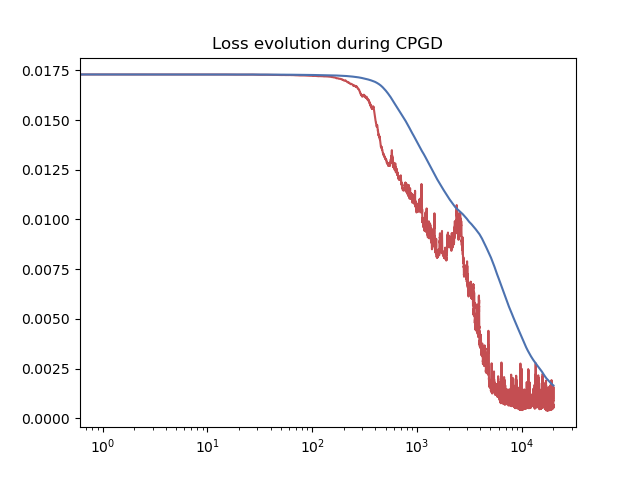}
    \includegraphics[width=0.32\textwidth]{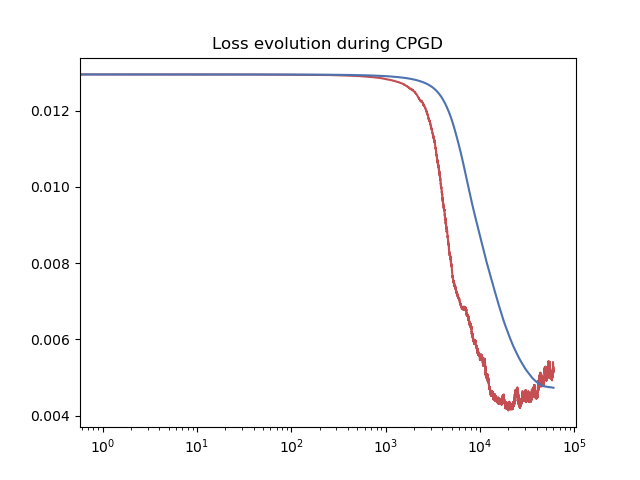}
    \caption{\textit{Top: Three 1-D Gaussian mixture distributions to be learnt by Supermix and Stochastic Conic Particle Gradient Descent. Bottom: Evolution of the loss (log scale) with our S-CPGD algorithm (red) and its averaged counterpart (in blue) when using our methods on the mixture problems.}}
    \label{fig:synthetic_gmm_1d}
    \label{fig:synthetic_loss_1d}
\end{figure}

\subsection{Benchmark}

Our experimental setup is essentially built with the help of three versions of the Conic Particle Gradient Descent.

\begin{itemize}
    \item   The first method we will use is the deterministic CPGD introduced in \cite{chizat2022sparse}, implemented by N. Jouvin \url{https://forgemia.inra.fr/njouvin/particle_blasso}. This method depends on the number of particles we use, the learning rate that encodes the gain of the algorithm at each iteration, and the number of iterations.
    \item   The second method is the Stochastic-CPGD introduced in this work. Our method depends on the same previous set of parameters (number of particles, learning rate, number of iterations) and of the batch size of the data we sample per iteration and the number of randomly sketched frequencies.
    \item   The last method is simply the Ces\`aro averaged counterpart of our Stochastic-CPGD, but this method raises some technical computational difficulties since averaging a sequence of measures seriously complicates the final estimates
    $\bar{\nu}_K = \frac{1}{K+1} \sum_{k=0}^K \nu_k$.
    To overcome this difficulty, we have chosen to use instead the measures supported by the averaged means all along the trajectory of the S-CPGD, and weighted by the averaged weights of the S-CPGD. For this purpose, we introduce
    $$
    \forall j \in \{1,\ldots,p\} \quad \forall K \ge 0 \qquad \bar{\vt}_j^K = \frac{1}{K+1} \sum_{k=0}^K \vt_j^k \qquad \text{and} \qquad 
    \bar{\omega}_j^K = \frac{1}{K+1} \sum_{k=0}^K \omega_j^k
    $$
    and we approximate $\bar{\nu}_K$ with the help of the sequence $\hat{\bar{\nu}}_K$, defined by:
\begin{equation}\label{def:approx_Cesaro}
\hat{\bar{\nu}}_K = \sum_{j=1}^p \bar{\omega}_j^K \delta_{\bar{\vt}_j^K}.
\end{equation}
Again, this sequence $\hat{\bar{\nu}}_K$ depends on several parameters, the learning rate, the number of particles and iterations, and the size of batches and sketches as well.
\end{itemize}

\subsection{Results}

\paragraph{Loss function: Averaging vs no averaging}
We show in Figure \ref{fig:synthetic_loss_1d} the evolution of the loss function $J_\nu$ over the iterations of the algorithm. We emphasize that the complexity of the S-CPGD and of the approximated Ces\`aro average are almost the same, since the sequence $\hat{\bar{\nu}}_K$ introduced in \eqref{def:approx_Cesaro} is a cheap approximation of the true Ces\`aro averaged sequence $\bar{\nu}_K$.

First, as indicated in Figure \ref{fig:synthetic_loss_1d}, we shall observe that the sequence $\hat{\bar{\nu}}_K$ always produces the desired smoothing effect all over the iterations of the algorithm, while slowing a bit the decrease of the loss function $J_\nu$ over the iterations. As a consequence, it seems more appropriate to use several long-range parallelized S-CPGD instead of a unique average thread of S-CPGD. 
In the same time, it may be remarked that our sequence $(\hat{\bar{\nu}}_K)_{K \ge 1}$ is a rough approximation of the true Ces\`aro averaging that is studied in our paper and the numerical approximation introduced in \eqref{def:approx_Cesaro} may not be as good as the true Ces\`aro sequence~$(\bar{\nu}_K)_{K \ge 1}$.

Second, as a classical phenomenon in machine learning when using stochastic approximation algorithm, or over-parametrized neural networks, our S-CPGD commonly generates some double-descent phenomena (see the 3 sub-figures of Figure \ref{fig:synthetic_loss_1d}) that translates some local minimizer escape of the swarm of particles.

\paragraph{Loss function: Averaging vs no averaging vs Deterministic}
Figure \ref{fig:synthetic_loss_1d_det} represents the evolution of the cost function with respect to the numerical cost which is a far better indicator than the number of iterations of the algorithm in our case since the S-CPGD algorithm is designed to be much more cheaper than the deterministic CPGD.
\begin{figure}[ht!]
    \centering
    \includegraphics[scale=0.34]{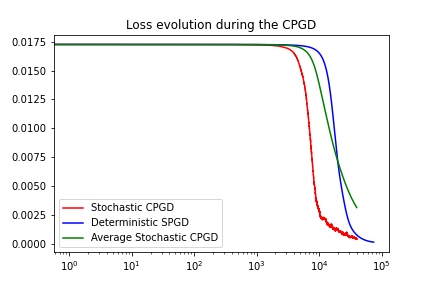}
    \includegraphics[scale=0.34]{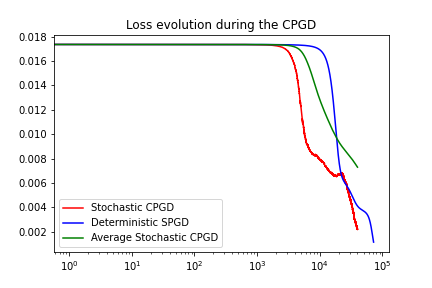}
    \includegraphics[scale=0.34]{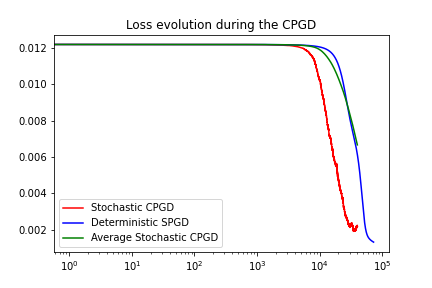} \\
    \includegraphics[scale=0.34]{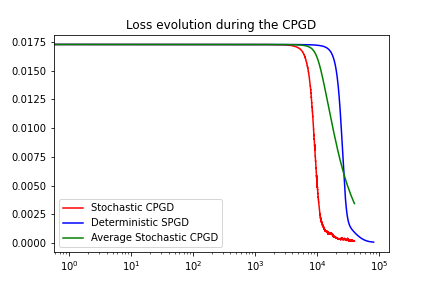}
    \includegraphics[scale=0.34]{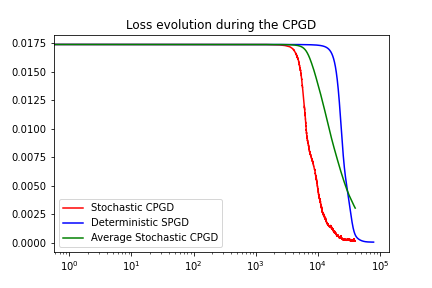}
    \includegraphics[scale=0.34]{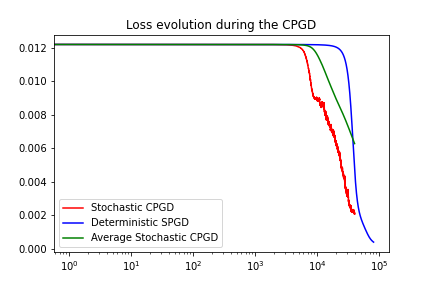} \\
    \caption{\textit{Evolution of the loss (log scale of the \textbf{computational time}) with our S-CPGD algorithm (red), its averaged counterpart (in green) and the deterministic CPGD (in blue) on the mixture problem of Figure~\ref{fig:synthetic_gmm_1d} with 20 particles (top) and with 50 particles (bottom). As indicated in the text, and observed with the shift to the right of the blue curve when compared to the red one, the cost of our S-CPGD is much cheaper than the deterministic CPGD.}}
    \label{fig:synthetic_loss_1d_det}
\end{figure}
Figure \ref{fig:synthetic_loss_1d_det} clearly illustrates the efficiency of our method with regards to the deterministic one as the red curve shows that the non-averaged S-CPGD produces comparable results as those obtained by CPGD with a significantly lower needs of computational cost: the red curve is clearly shifted on the left when compared to the blue one. It is furthermore possible to quantitatively assess the numerical gain produced by the S-CPGD when compared to the deterministic one: on our toy example, the deterministic CPGD requires approximately 4 more computations to attain the same decrease of the $J_\nu$, this effect being even amplified when the number of particles is increasing.

\paragraph{Loss function: Effect of the number of particles}

In the meantime, we observe that the loss function benefits from a large number of particles (see the comparison between top and bottom lines of Figure \ref{fig:synthetic_loss_1d_det}) but this should be tempered by the increasing number of simulations, which varies linearly with the number of particles. We should finally observe that using a large number of particles seems to be important especially in difficult situations (as illustrated in the right column of Figure \ref{fig:synthetic_loss_1d_det} where using 50 particles instead of 20 significantly improves the loss function, which is not the case on the right column of Figure \ref{fig:synthetic_loss_1d_det}.

\medskip

The effect of the number of particles can also be illustrated while looking at the trajectories themselves of the particles as shown in Figure \ref{fig:trajec_1d}. We observe that the number of particles is a clear key parameter that strongly influences the success of the method. In our example of 5 components GMM, (last example in Figure \ref{fig:synthetic_gmm_1d}), we see that a too small number of particles completely miss some components of the mixture while using a strongly over-parametrized set of particles permit to fully recover the support of the mixing distribution, even in the situation where some components of the mixture overlap.
\begin{figure}[ht!]
    \centering
    \includegraphics[width=0.49\textwidth]{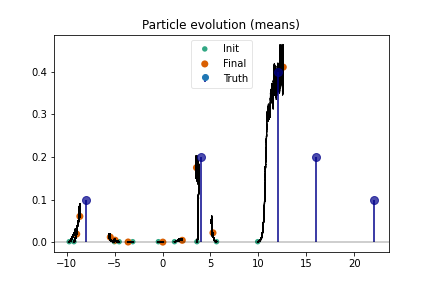}
    \includegraphics[width=0.49\textwidth]{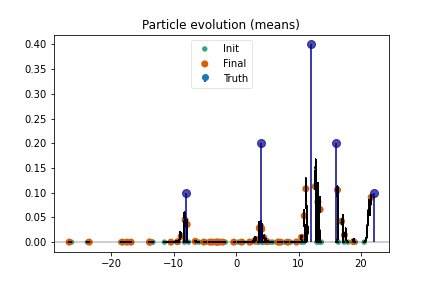}
\caption{\textit{Trajectories of the particles of our S-CPGD algorithm in the third example of the Gaussian mixture problem of Figure \ref{fig:synthetic_gmm_1d} with 5 modes. Left: trajectories using 10 particles. Right: Same with 50 particles. The l.h.s. shows the behaviour of S-CPGD when a too small number of particles is used. Particles concentrate around good positions but may miss some of the important locations of the Gaussian mixture. The r.h.s. demonstrates that a sufficiently large number of particles is necessary to guarantee an exhaustive reconstruction of the mixing distribution.}
    \label{fig:trajec_1d}}
\end{figure}

From our brief numerical study, we can conclude that both sketching and batch subsampling with a stochastic gradient strategy appears to strongly improve the numerical cost of the Conic Particle Gradient Descent, which permits to increase the number of particles used in the mean field approximation. We also have shown that in some difficult inverse problem examples, a large number of particles seems necessary to perfectly recover the solution of the optimization problem. It appears that the problem seriously benefits from a strong over-parametrisation, that may be handled with our cheap stochastic computing approach, which is not the case in a reasonable time with the deterministic CPGD.

\subsection{An example on a real dataset using two-layers neural network}
\label{sec:NN_CH}
We illustrate S-CPGD on the California Housing dataset, a standard dataset in Machine Learning see \url{https://inria.github.io/scikit-learn-mooc/python_scripts/datasets_california_housing.html}. It has $d=8$ features (dimension) and $20,640$ data points. We use a training set of size $N=18,576$, the remaining data points being the test set. Stochastic gradient descent was performed over batches of size $\mathrm{BS}=256$.

We use a simple neural network given by a hidden layer of size $p=500$ (number of particles) with ReLU activation function given by
\[
\sum_{i=1}^{500} \varepsilon_i\omega_i\, \mathrm{ReLu}(\langle\vt_i,\cdot\rangle)
\]
where $\vt_i\in\R^{8}$ are the location of the particles and $\varepsilon_i\omega_i\in\R$ their weights. Since the ReLu is one homogeneous, we project the locations onto the unit ball (radius $R_\cX=1$) at each iteration of the algorithm, the weights $\omega_i$ being scaled by $\|\vt_i\|_2$ in light of the identity:
\[
\sum_{i=1}^{500} \varepsilon_i\omega_i\, \mathrm{ReLu}(\langle\vt_i,\cdot\rangle)
=
\sum_{i=1}^{500} \varepsilon_i\omega_i\|\vt_i\|_2\, \mathrm{ReLu}(\langle\vt_i/\|\vt_i\|_2,\cdot\rangle)\,.
\]
This setup matches the one of \cite{bach2021gradient} and their experiments.

\begin{figure}
    \centering
    \includegraphics[width=0.45\linewidth]{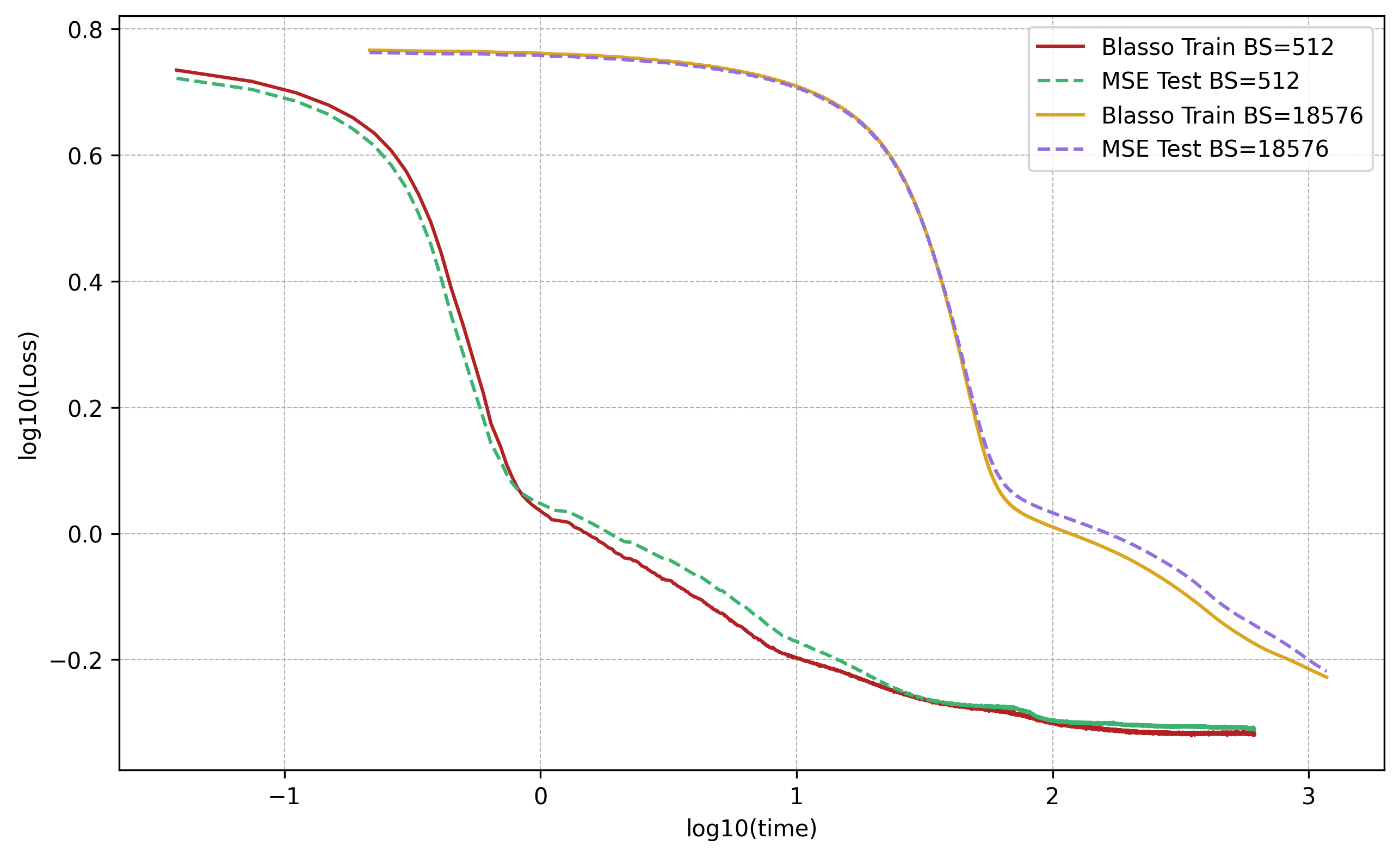}
    \includegraphics[width=0.45\linewidth]{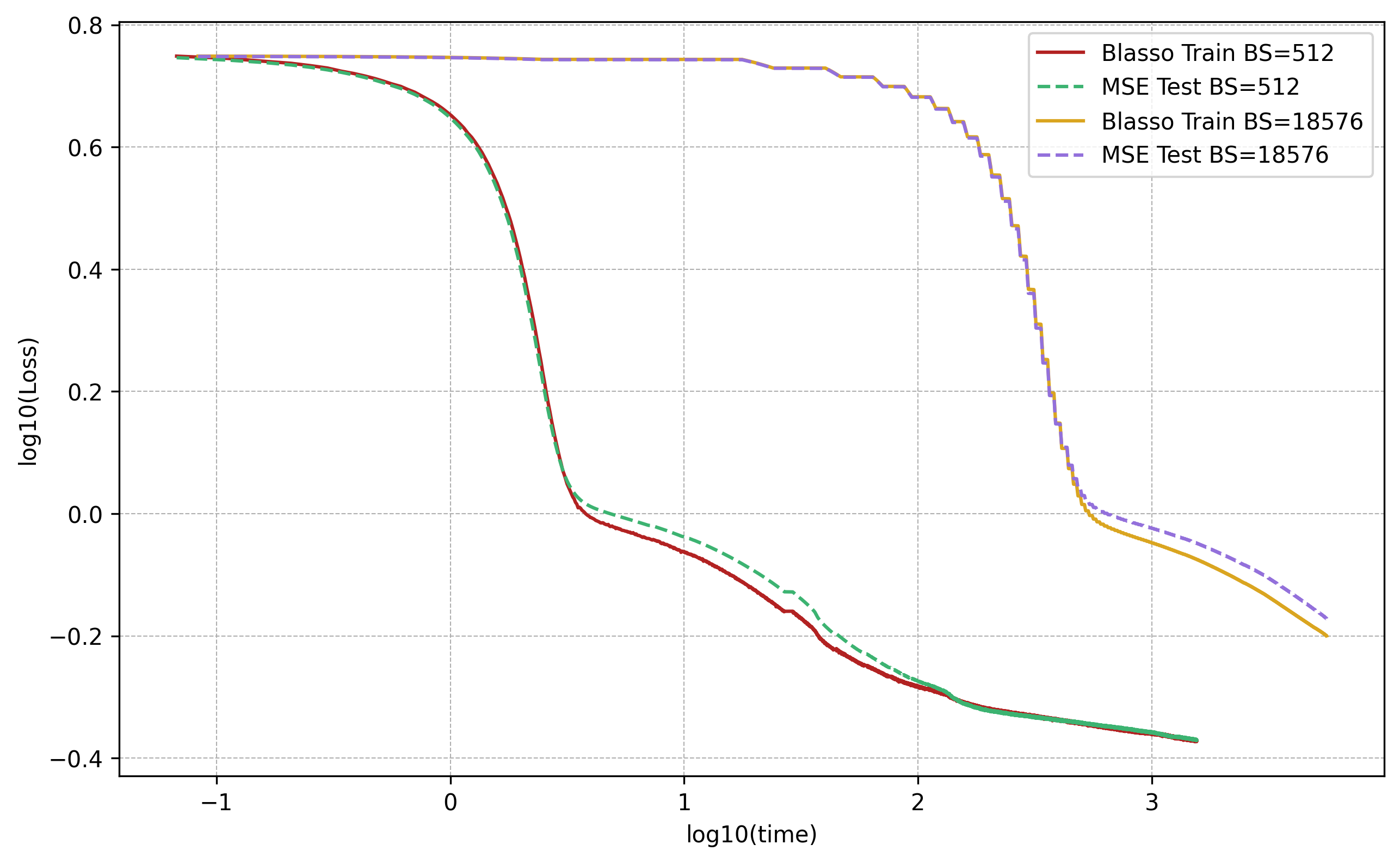}
    \caption{\textit{Stochastic/deterministic CPGD on California Housing dataset ($d=8$) using two-layers neural network with ReLu activation function. The stochastic version has a batch size $\mathrm{BS}=512$ while the deterministic CPGD computes gradients over the whole training set of size $\mathrm{BS}=18,576$}. We use $p=10$ particles on the right and $p=500$ on the left. The dotted line are the MSE on the test set of size $2,064$ over time. The plain line represents the BLASSO loss over time.}
    \label{fig:NN_SCPGD}
\end{figure}

\medskip

In Figure~\ref{fig:NN_SCPGD}, we observe a gain of two orders of magnitude when using the stochastic version of CPGD. The experiments were done on a single CPU of a computer (iMac with M3 chip) and shows that it takes around a few minutes to optimize $p=500$ particles in dimension $d=8$. 

\section{Proof of the main results \label{sec:proofs}}
In this section, $\mathfrak{C}$ will refer to a constant independent on $K$ and $p$, whose value may change from line to line.

{\subsection{Proof of Proposition \ref{prop:TV_nuk}}
\label{s:proofTV_nuk}}
\begin{proof}
{The principle of this proofs is as follows. We identify two radii $r$ and $r_0$ for which, for any $k \in \mathbb{N}^*$ and $j\in \lbrace 1,\dots, p \rbrace$, the two followings properties hold:
\begin{itemize}
\item If $\omega_j^k \leq r < r_0$, then $\omega_j^{k+1} \leq r_0$.
\item If $r \leq \omega_j^k \leq r_0$, then $\omega_j^{k+1} \leq \omega_j^k$.
\end{itemize}
Let $k\in \mathbb{N}$ and $j\in \lbrace 1,\dots, p\rbrace$ be fixed. First recall from \eqref{eq:up_w} that 
\[
\forall j \in \{1,\ldots,p\}\,, \qquad \omega^{k+1}_j = \omega^{k}_j e^{-\alpha \widehat{\mJ'_{\nu_k}}(\vt_j^k)}\, ,
\]
where
\[
\widehat{\mJ'_{\nu_k}}(\vt_j^k) = \frac{1}{m_k} \sum_{l=1}^{m_k} \mJ'_\nu(\vt_j^k,Z_l^{k+1}) = \frac{1}{m_k} \sum_{l=1}^{m_k} \left( \|\nu_k\|_{\mathrm{TV}} \, \vg_{\vt_j^k,T_l^{k+1}}(U_l^{k+1})  - \vh_{\vt_j^k}(V_l^{k+1}) + \lambda \right).
\]
In particular, we have 
\begin{equation}
\widehat{\mJ'_{\nu_k}}(\vt_j^k) \geq \omega_j^k \| \mathbf{g}\|_{\mathrm{Inf}} - \| \mathbf{h}\|_\infty +\lambda.
\label{eq:borneinf_J'}
\end{equation}
\underline{$1^{st}$ case: } If we have $ \lambda - \|\mathbf{h}\|_\infty \ge 0$,
then $\widehat{\mJ'_{\nu_k}}(\vt_j^k) \geq 0$ and $(\omega_j^k)_{k \ge 0}$ is a decreasing sequence, which yields $$ \forall k \ge 1 \qquad \|\nuk\|_{TV} \leq \|\nu_0\|_{TV}.$$
\underline{$2^{nd}$ case: } If we have $ \lambda - \|\mathbf{h}\|_\infty\le 0$, then 
\eqref{eq:borneinf_J'} together with \eqref{eq:up_w} entails that
\[\omega_j^k \geq \frac{\| \mathbf{h}\|_\infty -\lambda}{\| \mathbf{g}\|_{\mathrm{Inf}}}
\quad \Longleftrightarrow \quad \omega_j^k \| \mathbf{g}\|_{\mathrm{Inf}} - \| \mathbf{h}\|_\infty +\lambda \geq 0   \Longrightarrow \omega_j^{k+1} \leq \omega_j^k,
\]
whereas
\[
\omega_j^k  \leq \frac{\| \mathbf{h}\|_\infty -\lambda}{\| \mathbf{g}\|_{\mathrm{Inf}}}
\Longrightarrow \omega_j^{k+1} \leq \frac{\| \mathbf{h}\|_\infty -\lambda}{\| \mathbf{g}\|_{\mathrm{Inf}}} e^{ \| \mathbf{h}\|_\infty -\lambda}.
\]
Finally, we end the proof using an induction argument. }
\end{proof}
\subsection{Proof of Theorem \ref{theo:global_sto}}
\label{s:proof_global_sto}

\subsubsection{The shadow sequence}

Consider an integer $k \ge 0$ and the map $\mathcal{T}_{k+1}$ defined as:
{
\begin{equation} \label{def:Tk}
\forall t \in \cX \qquad 
\mathcal{T}_{k+1}(t) = \pi_\rad(t - \eta \widehat{\mD_{\nu_k}}(\vt_j^k)).
\end{equation}}
The sequence of maps $(\mathcal{T}_{k+1})_{k \ge 0}$ only acts on the positions of $\cX$ and is built with the random sequence $(\nu_k)_{k \ge 1}$.

Using $(\mathcal{T}_{k+1})_{k\geq 0}$, we then define the shadow sequence $(\nuke)_{k \ge 1}$ obtained through an iterative push-forward from a given initialisation measure $\nu_0^\epsilon \in \measet_+$ with the sequence of maps $(\mathcal{T}_k)_{k \ge 1}$. More formally, we set-up this definition in an iterative way:
\begin{equation}
\nukpe = \mathcal{T}_{k+1}^{\#}(\nuke) \qquad \forall k\in \mathbb{N}^\star,
\label{eq:shadow}
\end{equation}
where, for any continuous function $\psi$,
$$ \int_\cX \psi d\mathcal{T}_{k+1}^{\#}(\nu) = \int_\mathbb{} \psi(\mathcal{T}_{k+1}(.)) d\nu \quad \forall \nu \in \mathcal{M}(\cX). $$
The measure $\nu_0^{\varepsilon}$ will be defined carefully at the very end of our study. Roughly speaking, the shadow sequence $(\nuke)_{k \ge 1}$ moves exactly like $(\nuk)_{k \ge 1}$ and will share the same support, but the weights on the particles for the sequence $(\nuke)_{k \ge 1}$ will be optimised to allow for a good approximation of $\mus$. In particular, if we decompose the initial measure $\nu_0$ as
\[
\nu_0 = \sum_{j=1}^p \omega_j \delta_{\vt^0_j},
\]
then we can write $\nu_0^{\varepsilon} = \nu(\weights^{\varepsilon},\vt^{0})$, so that:
\[
\nu_0^{\varepsilon} = \sum_{j=1}^p \omega_j^{\varepsilon} \delta_{\vt^0_j},
\]
for some weights $(\omega_j^\epsilon)_{j=1..p}$ that will be chosen in an appropriate way.

\subsubsection{Excess risk decomposition}

The starting point is Proposition \ref{prop:jnuprime} that is used with $\nu=\nu_k$ and $\sigma=\mus-\nu_k$. We   write:
\begin{align}
    J(\nuk)-J(\mus)&=\int_{\cX} J'_{\nuk} \text{d}[\nuk-\mus] - \frac{1}{2} \|\Phi(\mus-\nuk)\|^2_\bbH \nonumber \\
    & = \underbrace{\int_{\cX} J'_{\nuk} \text{d}[\nu_k-\nuke]}_{:=\text{\textcircled{1}}} + 
    \underbrace{\int_{\cX} J'_{\nuk} \text{d}[\nuke-\mus]}_{:=\text{\textcircled{2}}}
    - \frac{1}{2} \|\Phi(\mus-\nuk)\|^2_\bbH \label{eq:decomposition}
\end{align}
where $(\nuke)_{k \ge 1}$ is the auxiliary shadow sequence of measures introduced in (\ref{eq:shadow}). First, we establish that the mirror descent adapts the weights of $(\nuk)_{k \ge 1}$ to those of the shadow sequence $(\nuke)_{k \ge 1}$. For any $\mu_1,\mu_2 \in \mathcal{M}(\cX)_+$, we introduce the following entropy:
\begin{equation}
\label{def:H}
\mathcal{H}(\mu_1,\mu_2) = -\int_{\cX} \log \left(\frac{\text{d}\mu_1}{\text{d}\mu_2}\right) \text{d} \mu_2 - \|\mu_2\|_{\mathrm{TV}}+\|\mu_1\|_{\mathrm{TV}}.
\end{equation}
The next proposition focuses on the first term of Equation \eqref{eq:decomposition}.

\begin{proposition}\label{prop:md}
Term \textcircled{1} of Equation \eqref{eq:decomposition} may be decomposed as:
    \begin{align*}
\text{\textcircled{1}} =    \int_{\cX} J'_{\nuk} \text{d}[\nuk-\nuke] &= 
\frac{1}{\alpha}\left[ \mathcal{H}(\nuk,\nuke) - \mathcal{H}(\nukp,\nukpe) \right]\\
&+ \frac{1}{\alpha} \sum_{j=1}^p \omega_j^{k}  \left[ \alpha  J'_{\nuk}(\vt_j^k) + e^{-\alpha \widehat{\mJ'_{\nu_k}}(\vt_j^k)} -  1 \right] 
+  \sum_{j=1}^p \omega_j^{\varepsilon}  {\widehat{\xi_{\nu_k}}(\vt_j^k))},
    \end{align*}
{where for all $k\in \lbrace 1,\dots, K \rbrace$ and for all $j\in \lbrace 1,\dots, p \rbrace$
\[
\widehat{\xi_{\nu_k}}(\vt_j^k):= \frac{1}{m_k} \sum_{l=1}^{m_k} \xi_{\nuk}(\vt_j^k,Z_l^{k+1}). \]}
\end{proposition}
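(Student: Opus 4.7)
My plan is to view Proposition~\ref{prop:md} as a discrete analogue of the classical mirror descent ``three points'' identity, where $\mathcal{H}$ plays the role of a Bregman-type Lyapunov functional. Since the shadow sequence $(\nuke)_{k\ge 0}$ is defined as the push-forward of $\nu_0^\varepsilon$ by the same maps $\mathcal{T}_{k+1}$ used to move the positions of $\nu_k$, the two sequences always share the same support: writing $\nu_k = \sum_{j=1}^p \omega_j^k \delta_{\vt_j^k}$, one automatically has $\nuke = \sum_{j=1}^p \omega_j^\varepsilon \delta_{\vt_j^k}$, with the \emph{same} weights $(\omega_j^\varepsilon)_{j=1}^p$ at every step $k$. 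Hence the first step is the trivial rewrite
\[
\text{\textcircled{1}} = \int_\cX J'_{\nuk}\,\mathrm{d}[\nu_k-\nuke] = \sum_{j=1}^p (\omega_j^k-\omega_j^\varepsilon)\,J'_{\nuk}(\vt_j^k).
\]

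Second, I expand the entropy functional \eqref{def:H} on the common support of $\nu_k$ and $\nuke$:
\[
\mathcal{H}(\nu_k,\nuke) = -\sum_{j=1}^p \omega_j^\varepsilon \log\!\left(\frac{\omega_j^k}{\omega_j^\varepsilon}\right) - \sum_{j=1}^p \omega_j^\varepsilon + \sum_{j=1}^p \omega_j^k,
\]
and similarly for $\mathcal{H}(\nukp,\nukpe)$. Taking the difference, the constant $\sum_j \omega_j^\varepsilon$ cancels, and I can inject the mirror-descent weight update $\omega_j^{k+1} = \omega_j^k \exp(-\alpha \widehat{\mJ'_{\nu_k}}(\vt_j^k))$ (from \eqref{eq:up_w}), which converts $\log\omega_j^k - \log\omega_j^{k+1}$ into $\alpha \widehat{\mJ'_{\nu_k}}(\vt_j^k)$. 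This yields
\[
\tfrac{1}{\alpha}\bigl[\mathcal{H}(\nu_k,\nuke)-\mathcal{H}(\nukp,\nukpe)\bigr] = -\sum_{j=1}^p \omega_j^\varepsilon\, \widehat{\mJ'_{\nu_k}}(\vt_j^k) + \tfrac{1}{\alpha}\sum_{j=1}^p \omega_j^k\bigl(1-e^{-\alpha \widehat{\mJ'_{\nu_k}}(\vt_j^k)}\bigr).
\]

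Third, I substitute the unbiased decomposition $\widehat{\mJ'_{\nu_k}}(\vt_j^k) = J'_{\nu_k}(\vt_j^k) + \widehat{\xi_{\nu_k}}(\vt_j^k)$ (which follows from averaging \eqref{def:grad_sto_J} over the mini-batch) only in the first sum on the right-hand side above. The linear-in-$J'$ piece produces $-\sum_j \omega_j^\varepsilon J'_{\nuk}(\vt_j^k)$ together with the noise term $-\sum_j \omega_j^\varepsilon \widehat{\xi_{\nu_k}}(\vt_j^k)$. Moving the latter to the other side (its sign is then flipped, matching the statement) and adding and subtracting $\sum_j \omega_j^k J'_{\nuk}(\vt_j^k)$ reconstructs $\text{\textcircled{1}}$ plus the nonlinear remainder $\tfrac{1}{\alpha}\sum_j \omega_j^k\bigl[\alpha J'_{\nuk}(\vt_j^k) + e^{-\alpha \widehat{\mJ'_{\nu_k}}(\vt_j^k)} - 1\bigr]$, which is exactly the quantity that would vanish to first order if $\alpha$ were infinitesimal.

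I do not anticipate a genuine obstacle here: the entire argument is an exact algebraic identity, and the only subtlety is bookkeeping, in particular the consistent use of the fact that the push-forward leaves the weights $\omega_j^\varepsilon$ unchanged across iterations so that the $\mathcal{H}$-differences telescope cleanly. The nonlinear term will later be handled (outside this proposition) by a Taylor-type estimate of the form $1 - \alpha x - e^{-\alpha x} \le (\alpha x)^2/2$, but this is not needed for the statement at hand.
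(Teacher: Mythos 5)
Your proof is correct and follows essentially the same route as the paper: both rest on the observation that $\nu_k$ and $\nu_k^{\varepsilon}$ share the support $\{\vt_j^k\}_{j=1}^p$ with the shadow weights frozen at $\omega_j^{\varepsilon}$, the identity $\log(\omega_j^k/\omega_j^{k+1})=\alpha\,\widehat{\mJ'_{\nu_k}}(\vt_j^k)$ from the exponential update, and the decomposition $\widehat{\mJ'_{\nu_k}}=J'_{\nu_k}+\widehat{\xi_{\nu_k}}$. The only difference is cosmetic — you expand the entropy difference first and rearrange toward \textcircled{1}, whereas the paper starts from \textcircled{1} and reassembles the entropy terms at the end — so the two arguments are the same algebraic identity read in opposite directions.
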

\begin{proof}
Since both measures $\nuk$ and $\nuke$ share the same particle locations $\bm t^k$, we can remark that
    \begin{equation}\label{eq:dec_particules}
   \text{\textcircled{1}}=  \int_{\cX} J'_{\nuk} \text{d}[\nuk-\nuke] = \sum_{j=1}^p [\omega_j^{k}-\omega_j^{\varepsilon}] J'_{\nuk}(\vt_j^k)
    \end{equation}
We then observe from Equation \eqref{eq:up_w} that:
\[
\omega_j^{k+1} = \omega_j^{k} e^{-\alpha \widehat{\mJ'_{\nu_k}}(\vt_j^k} \quad \Rightarrow \quad \widehat{\mJ'_{\nu_k}}(\vt_j^k) = - \frac{1}{\alpha} \log \left( \frac{\omega_j^{k+1}}{\omega_j^{k}}\right).
\]
Using now Equation \eqref{def:grad_sto_J}, we observe that:
\[
J'_{\nuk}(\vt_j^{k}) = - \frac{1}{\alpha} \log \left( \frac{\omega_j^{k+1}}{\omega_j^{k}}\right) - \frac{1}{m_k} \sum_{l=1}^{m_k} \xi_{\nuk}(\vt_j^k,Z_l^{k+1}).
\]    
We then use the previous equality in \eqref{eq:dec_particules} and obtain that:
\begin{align*}
\text{\textcircled{1}} &= \sum_{j=1}^p  \left(\omega_j^{k} J'_{\nuk}(\vt_j^k)-\omega_j^{\varepsilon} \left[ - \frac{1}{\alpha} \log \left( \frac{\omega_j^{k+1}}{\omega_j^{k}}\right) - \widehat{\xi_{\nu_k}}(\vt_j^k)\right] \right),\\
&= \sum_{j=1}^p \left( \omega_j^{k} J'_{\nuk}(\vt_j^k) + \omega_j^{\varepsilon}   \frac{1}{\alpha} \log \left( \frac{\omega_j^{k+1}}{\omega_j^{k}}\right)\right)+ \sum_{j=1}^p \omega_j^{\varepsilon}  \widehat{\xi_{\nu_k}}(\vt_j^k) \\
& = \frac{1}{\alpha} \sum_{j=1}^p \left( \alpha \omega_j^{k} J'_{\nuk}(\vt_j^k) + \omega_j^{\varepsilon} \left[  \log \left( \frac{\omega_j^{k+1}}{\omega_j^{\varepsilon}}\right) - \log \left( \frac{\omega_j^{k}}{\omega_j^{\varepsilon}}\right)\right]\right)
+ \sum_{j=1}^p \omega_j^{\varepsilon}  \widehat{\xi_{\nu_k}}(\vt_j^k).
\end{align*}
We use the entropy $\mathcal{H}$ introduced in Equation \eqref{def:H} and deduce that:
\begin{align*}
\text{\textcircled{1}} &=\frac{1}{\alpha} \sum_{j=1}^p \left[ \alpha \omega_j^{k} J'_{\nuk}(\vt_j^k) + \omega_j^{k+1} -  \omega_j^{k} \right] + \frac{
\mathcal{H}(\nuk,\nuke) - \mathcal{H}(\nukp,\nukpe) }{\alpha} +  \sum_{j=1}^p \omega_j^{\varepsilon}  \widehat{\xi_{\nu_k}}(\vt_j^k) \\
& =\frac{1}{\alpha} \sum_{j=1}^p \omega_j^{k}  \left[ \alpha  J'_{\nuk}(\vt_j^k) + e^{-\alpha \widehat{\mJ'_{\nu_k}}(\vt_j^k)} -  1 \right] + 
 \frac{\mathcal{H}(\nuk,\nuke) - \mathcal{H}(\nukp,\nukpe) }{\alpha}  +  \sum_{j=1}^p \omega_j^{\varepsilon}  \widehat{\xi_{\nu_k}}(\vt_j^k).
\end{align*}
We then obtain the conclusion of the proof.
\end{proof}
Now, we study the second term of Equation \eqref{eq:decomposition}, which is an ``approximation'' term. We essentially follow the same methodology proposed in \cite{chizat2022sparse} but we use the specificity of our model to properly analyze this term.
For this purpose, we use the BL norm (over functions) and dual norm (over measures)
introduced in \cite{chizat2022sparse}, defined as:
\begin{equation}\label{def:norm_BL}
\forall f: \cX \rightarrow \mathds{R} \qquad \|f\|_{BL} = \|f\|_{\infty} + \|f\|_{\mathrm{Lip}},
\end{equation}
where $\|.\|_{\infty}$ refers to the supremum norm over $\cX$, $\|.\|_{\mathrm{Lip}}$ to the Lipschitz constant for $f$, and
\begin{equation}\label{def:dual_BL}
\forall \nu \in  \measet_+ \qquad \|\nu\|^*_{BL} = \sup_{\|f\|_{BL} \leq 1} \int f \text{d}\nu.
\end{equation}

Using these notation, we can propose a bound on the second term of Equation \eqref{eq:decomposition} as displayed in the following proposition. 

\begin{proposition}
\label{prop:approx}
The approximation term \textcircled{2} satisfies:
    \[
    \forall k \ge 1 \qquad 
\text{\textcircled{2}}=    \int_{\cX} J'_{\nuk} \text{d}[\nuke-\mus] \leq
    \mathfrak{A}_k \|\nuke-\mus\|_{BL}^* \quad \text{a.s.}  
    \]
    where $\mathfrak{A}_k$ is given by:
    \begin{equation}\label{def:Ak}
    \mathfrak{A}_k = \mathcal{C}_0(\|\nu_k\|_{\mathrm{TV}} +1) + Lip(\varphi)\left[ \|\nu_k\|_{\mathrm{TV}}\|\varphi\|_{\infty,\mathds{H}} + \|y\|_\mathds{H} \right]\,,
    \end{equation}
where 
$$ \mathcal{C}_0 = \max( \lambda + \|\varphi\|_{\infty,\mathds{H}}\|y\|_\mathds{H} ; \|\varphi\|_{\infty,\mathds{H}}^2)\,.$$
\end{proposition}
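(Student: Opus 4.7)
The strategy is a direct duality argument: I will show that the function $\vt\mapsto J'_{\nuk}(\vt)$ has bounded Lipschitz norm $\|J'_{\nuk}\|_{BL}\leq \mathfrak{A}_k$, and then pair it against the signed measure $\nuke-\mus$ using the dual norm~\eqref{def:dual_BL}. Recalling from~\eqref{eq:def_J_prime} that
\[
J'_{\nuk}(\vt)=\sum_{j=1}^p \omega_j^k\langle \varphi_\vt,\varphi_{\vt_j^k}\rangle_{\mathds H}-\langle \varphi_\vt,\vy\rangle_{\mathds H}+\lambda\,,
\]
the task splits naturally into a supremum bound and a Lipschitz bound for this explicit function of $\vt$.

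For the supremum part, I will apply Cauchy--Schwarz term by term: $|\langle \varphi_\vt,\varphi_{\vt_j^k}\rangle_{\mathds H}|\leq \|\varphi\|^2_{\infty,\mathds H}$ and $|\langle \varphi_\vt,\vy\rangle_{\mathds H}|\leq \|\varphi\|_{\infty,\mathds H}\|\vy\|_{\mathds H}$, so that after summing the weights into $\|\nuk\|_{\mathrm{TV}}$ one obtains
\[
\|J'_{\nuk}\|_\infty\leq \|\varphi\|_{\infty,\mathds H}^2\|\nuk\|_{\mathrm{TV}}+\|\varphi\|_{\infty,\mathds H}\|\vy\|_{\mathds H}+\lambda\leq \mathcal C_0(\|\nuk\|_{\mathrm{TV}}+1)\,,
\]
the last inequality being the definition of $\mathcal C_0$. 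For the Lipschitz part, I will use bilinearity to write
\[
J'_{\nuk}(\vt)-J'_{\nuk}(\vs)=\sum_{j=1}^p\omega_j^k\langle \varphi_\vt-\varphi_\vs,\varphi_{\vt_j^k}\rangle_{\mathds H}-\langle \varphi_\vt-\varphi_\vs,\vy\rangle_{\mathds H}\,,
\]
factor out $\|\varphi_\vt-\varphi_\vs\|_{\mathds H}\leq \|\varphi\|_{\mathrm{Lip}}\|\vt-\vs\|$ by Cauchy--Schwarz, and bound the remaining factor by $\|\nuk\|_{\mathrm{TV}}\|\varphi\|_{\infty,\mathds H}+\|\vy\|_{\mathds H}$, giving exactly the second contribution to $\mathfrak{A}_k$.

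Summing the two estimates yields $\|J'_{\nuk}\|_{BL}\leq \mathfrak{A}_k$ pointwise in the randomness (no stochastic approximation is involved at this step, only the deterministic functional $J'_{\nuk}$), hence the inequality holds almost surely. To conclude, note that although definition~\eqref{def:dual_BL} is stated for $\nu\in\measet_+$, the natural extension to signed Radon measures $\mu$ given by $\|\mu\|_{BL}^*:=\sup_{\|f\|_{BL}\leq 1}\int f\,\mathrm d\mu$ satisfies $\int f\,\mathrm d\mu\leq \|f\|_{BL}\|\mu\|_{BL}^*$ for every continuous Lipschitz $f$; applying this to $f=J'_{\nuk}/\mathfrak{A}_k$ and $\mu=\nuke-\mus$ closes the argument. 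I do not anticipate any real obstacle here: the proposition is essentially a clean bookkeeping step, and the main point is simply to check that Assumption~\eqref{ass:kernel_continuity} plus boundedness of the feature map supply the needed $\|\varphi\|_{\infty,\mathds H}$ and $\|\varphi\|_{\mathrm{Lip}}$ constants.
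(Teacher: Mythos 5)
Your proposal is correct and follows essentially the same route as the paper: bound \textcircled{2} by $\|J'_{\nu_k}\|_{BL}\,\|\nu_k^{\varepsilon}-\mu^\star\|_{BL}^*$, control the sup-norm via Cauchy--Schwarz (the paper simply cites its Lemma~\ref{lem:boundJprime} for this part, which you re-derive inline), and bound the Lipschitz seminorm by factoring $\|\varphi_\vt-\varphi_\vs\|_{\mathds H}\leq\|\varphi\|_{\mathrm{Lip}}\|\vt-\vs\|$. Your extra remark about extending the dual BL pairing to signed measures is a reasonable clarification the paper leaves implicit, but it does not change the argument.
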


\begin{proof}
We can immediately remark that 
$$ \text{\textcircled{2}}=    \int_{\cX} J'_{\nuk} \text{d}[\nuke-\mus] \leq \| J_{\nu_k}' \|_{BL} \| \nuke-\mus\|_{BL}^*. $$
Then, according to Lemma \ref{lem:boundJprime}, 
$$ \| J_{\nu_k}' \|_{BL} = \| J_{\nu_k}' \|_\infty + \| J_{\nu_k}' \|_{\mathrm{Lip}} \leq \mathcal{C}_0(\|\nu_k\|_{\mathrm{TV}} +1) + \| J_{\nu_k}' \|_{\mathrm{Lip}}. $$
To conclude the proof, we have to propose an upper bound on $\| J_{\nu_k}' \|_{\mathrm{Lip}}$. For any $s,t\in \cX$ we have 
\begin{eqnarray*} 
|J_{\nu_k}'(s)- J_{\nu_k}'(t)| 
& = & \left| \sum_{j=1}^p \omega_j^k \langle \varphi_t-\varphi_s, \varphi_{\bm t_j^k}\rangle_{\mathds{H}} - \langle \varphi_t-\varphi_s,\bm y\rangle_\mathds{H} \right|, \\
& \leq & \sum_{j=1}^p \omega_j^k \| \varphi_t-\varphi_s\|_\mathds{H} \|\varphi_{\bm t_j^k}\|_\mathds{H} + \| \varphi_t-\varphi_s\|_\mathds{H} \|\bm y\|_\mathds{H}, \\
& \leq & Lip(\varphi)\left[ \|\nu_k\|_{\mathrm{TV}}\|\varphi\|_{\infty,\mathds{H}} + \|y\|_\mathds{H} \right] \times \|t-s\|_\cX.
\end{eqnarray*}
The results is obtained by gathering the previous bounds. 
\end{proof}

We finally introduce a key term that quantifies the way where $\mu^\star$ can be approximated by a discrete measure. This term, denoted by $\mathcal{Q}$, is defined as
\begin{equation}
    \label{def:Q}
    \mathcal{Q}_{\mus,\nu_0}(\tau) := \inf_{\mu \in \measet_+} \left[\|\mus-\mu\|_{BL}^*+\frac{1}{\tau}\mathcal{H}(\mu,\nu_0)\right] \quad \forall \tau>0.
\end{equation}

\subsubsection{Proof of Theorem \ref{theo:global_sto}}

\begin{proof}
The proof is decomposed into three steps.\\

\noindent \underline{Step 1: Decomposition of the excess risk with the convexity of $J$.}

\noindent We denote by $(\mathfrak{F}_k)_{k \ge 0}$ the natural canonical filtration associated to the sequence of random variables~$(Z^k)_{k \ge 0}$. The next upper bound is a consequence of the relationship (\ref{eq:decomposition}) and Propositions \ref{prop:md}, \ref{prop:approx}. We have
\begin{align*}
J(\nuk)-J(\mus) &\leq  \frac{\mathcal{H}(\nuk,\nuke) - \mathcal{H}(\nukp,\nukpe) }{\alpha}  +     \mathfrak{A}_k \|\nuke-\mus\|_{BL}^* \\
& + \frac{1}{\alpha} \sum_{j=1}^p \omega_j^{k}  \left[ \alpha  J'_{\nuk}(\vt_j^k) + e^{-\alpha \widehat{\mJ'_{\nu_k}}(\vt_j^k)} -  1 \right] 
+  \sum_{j=1}^p \omega_j^{\varepsilon}   \widehat{\xi_{\nu_k}}(\vt_j^k).
\end{align*}
We then use a telescopic sum argument and obtain that:
\begin{align*}
\sum_{k=0}^K \left(J(\nuk)-J(\mus)\right) & \leq \frac{\mathcal{H}(\nu_0,\nu_0^\varepsilon)}{\alpha} + \sum_{k=0}^K\mathfrak{A}_k \|\nuke-\mus\|_{BL}^* \\
& + \frac{1}{\alpha} \sum_{k=0}^K  \sum_{j=1}^p \omega_j^{k}  \left[ \alpha  J'_{\nuk}(\vt_j^k) + e^{-\alpha \widehat{\mJ'_{\nu_k}}(\vt_j^k)} -  1 \right] + \sum_{k=0}^K \sum_{j=1}^p \omega_j^{\varepsilon}   \widehat{\xi_{\nu_k}}(\vt_j^k)).
\end{align*}
Finally, using the convexity of $J$, the Ces\`aro average defined by:
\begin{equation}
\bar{\nu}_K = \frac{1}{K+1} \sum_{k=0}^K \nuk, \end{equation}
satisfies:
\begin{align*}
J(\bar{\nu}_K)-J(\mus) & \leq \frac{\mathcal{H}(\nu_0,\nu_0^\varepsilon)}{\alpha (K+1)} + \frac{\sum_{k=0}^K\mathfrak{A}_k \|\nuke-\mus\|_{BL}^*}{K+1} \\
& + \frac{\sum_{k=0}^K  \sum_{j=1}^p \omega_j^{k}  \left[ \alpha  J'_{\nuk}(\vt_j^k) + e^{-\alpha \widehat{\mJ'_{\nu_k}}(\vt_j^k)} -  1 \right]}{\alpha (K+1)}  + \frac{\sum_{k=0}^K \sum_{j=1}^p \omega_j^{\varepsilon}   \widehat{\xi_{\nu_k}}(\vt_j^k)}{K+1} \\
&  \leq \frac{\mathcal{H}(\nu_0,\nu_0^\varepsilon)}{\alpha K} + \frac{\sum_{k=0}^K\mathfrak{A}_k \left[  \|\nu_{0}^{\varepsilon}- \mus\|^*_{BL} + \sum_{\ell=0}^{k-1} \|\nu_{\ell+1}^{\varepsilon}- \nu_{\ell}^{\varepsilon}\|_{BL}^* \right]}{K} \\
& + \frac{\sum_{k=0}^K  \sum_{j=1}^p \omega_j^{k}  \left[ \alpha  J'_{\nuk}(\vt_j^k) + e^{-\alpha \widehat{\mJ'_{\nu_k}}(\vt_j^k)} -  1 \right]}{\alpha K}  + \frac{\sum_{k=0}^K \sum_{j=1}^p \omega_j^{\varepsilon}   \widehat{\xi_{\nu_k}}(\vt_j^k)}{K} \\
\end{align*}
where we used the triangle inequality on the telescopic decomposition \[\nuke-\mus = (\nuke-\nu_{k-1}^{\varepsilon}) + (\nu_{k-1}^{\varepsilon}-\nu_{k-2}^{\varepsilon})+\ldots + (\nu_0^{\varepsilon}-\mus).\]
We then take the expectation and use in particular a standard conditional expectation argument. Since $\mathbb{E}[ \xi_{\nuk}(\vt_j^k,Z_l^{k+1}) \vert \mathfrak{F}_k] = 0$ for any $l\in \lbrace 1,\dots, m_k\rbrace$, we deduce that: 
\begin{align}
\mathbb{E} \left[ J(\bar{\nu}_K)-J(\mus) \right] & \leq \frac{\mathcal{H}(\nu_0,\nu_0^\varepsilon)}{\alpha (K+1)} + \overbrace{
  \|\nu_{0}^{\varepsilon}- \mus\|^*_{BL} \frac{\sum_{k=0}^K\mathbb{E}[\mathfrak{A}_k]}{K+1}}^{:=A_1}+ 
\overbrace{  \frac{\sum_{k=0}^K\mathbb{E}\left[\mathfrak{A}_k \sum_{\ell=0}^{k-1} \|\nu_{\ell+1}^{\varepsilon}- \nu_{\ell}^{\varepsilon}\|_{BL}^*\right]}{K+1}}^{:=A_2} \nonumber\\ 
  & + \underbrace{\frac{\sum_{k=0}^K  \sum_{j=1}^p \mathbb{E} \left[ \omega_j^{k}  \left[ \alpha  J'_{\nuk}(\vt_j^k) + e^{-\alpha \widehat{\mJ'_{\nu_k}}(\vt_j^k)} -  1 \right] \right]}{\alpha (K+1)}}_{:=A_3}.
\end{align}
 
\noindent \underline{Step 2a: Study of $A_1$.}
We use the definition of $\mathfrak{A}_k$ in Equation \eqref{def:Ak} and observe that
$\mathfrak{A}_k \leq \mathfrak{C}(1+\|\nuk\|_{\mathrm{TV}})$. We then use Proposition \ref{prop:TV_nuk} and conclude that:
\begin{equation}\label{eq:up_A1}
\mathbb{E}[A_1] =  \|\nu_{0}^{\varepsilon}- \mus\|^*_{BL} \frac{\sum_{k=0}^K\mathbb{E}[\mathfrak{A}_k]}{K+1} \leq  \mathfrak{C} R_0  \|\nu_{0}^{\varepsilon}- \mus\|^*_{BL}.
\end{equation}

\noindent \underline{Step 2b: Study of $A_2$.}
We  focus on the shadow sequence that involves $\nu_{\ell+1}^{\varepsilon}- \nu_{\ell}^{\varepsilon}$. Using (\ref{eq:control_increment_pos}), observe that:
\begin{align}
\|\nu_{\ell+1}^{\varepsilon}- \nu_{\ell}^{\varepsilon}\|_{BL}^* & = \sup_{\|\psi\|_{BL} \leq 1} \int_{\cX} \psi(t) \text{d} [\nu^{\varepsilon}_{\ell+1}-\nu^{\varepsilon}_{\ell}](t)\nonumber\\
& = \sup_{\|\psi\|_{BL} \leq 1} \sum_{j=1}^p \omega_j^{\varepsilon} [\psi(\vt_j^{\ell+1}) - \psi(\vt_{j}^{\ell})] \nonumber\\
& \leq  \sum_{j=1}^p \omega_j^{\varepsilon} \|\vt_j^{\ell+1} - \vt_{j}^{\ell}\|_\cX \nonumber\\
& =  \eta \sum_{j=1}^p \omega_j^{\varepsilon} \|  \widehat{\mD_{\nu_k}}(\vt_j^k \|_\cX \nonumber\\
& \leq  \mathfrak{C} \eta \sum_{j=1}^p \omega_j^{\varepsilon} (1+\|\nu_{\ell}\|_{\mathrm{TV}}) \nonumber\\
& = \mathfrak{C} \eta \|\nu_0^{\varepsilon}\|_{\mathrm{TV}}(1+\|\nu_{\ell}\|_{\mathrm{TV}}),\label{eq:borne_tec_2}
\end{align}
where we used the almost sure upper bound in Proposition \ref{prop:bornes_techniques_J} and the fact that $\pi_\rad$ is $1$-Lipschitz. 
A simple sum yields:
\begin{align*}
\frac{\sum_{k=0}^K\mathfrak{A}_k  \sum_{\ell=0}^{k-1} \|\nu_{\ell+1}^{\varepsilon}- \nu_{\ell}^{\varepsilon}\|_{BL}^* }{K+1} 
&\leq \mathfrak{C} \eta \|\nu_0^{\varepsilon}\|_{\mathrm{TV}} \frac{\sum_{k=0}^K\mathfrak{A}_k \sum_{\ell=1}^k \left( 1+ \|\nu_{\ell}\|_{\mathrm{TV}}\right)}{K+1} \\
& \leq \mathfrak{C} \eta \|\nu_0^{\varepsilon}\|_{\mathrm{TV}}
\frac{\sum_{k=0}^K (1+\|\nuk\|_{\mathrm{TV}})\sum_{\ell=1}^k \left( 1+ \|\nu_{\ell}\|_{\mathrm{TV}}\right)}{K+1},
\end{align*}
for $\mathfrak{C}$ large enough. We apply Proposition \ref{prop:TV_nuk} and compute the expectation of the previous term. We then observe that:
\begin{equation}\label{eq:up_A2}
\mathbb{E}[A_2] = 
\frac{\sum_{k=0}^K\mathbb{E} \left[ \mathfrak{A}_k  \sum_{\ell=0}^{k-1} \|\nu_{\ell+1}^{\varepsilon}- \nu_{\ell}^{\varepsilon}\|_{BL}^*\right] }{K+1}  \leq \mathfrak{C} R_0^2 \eta \|\nu_0^{\varepsilon}\|_{\mathrm{TV}} K.
\end{equation}

\noindent \underline{Step 2c: Study of $A_3$.} This last term deserves a specific study. We use a conditional expectation argument and observe that, for any $k\in \lbrace 0,\dots, K\rbrace$, $j\in \lbrace 1,\dots, p \rbrace$,
$$
 \mathbb{E} \left[ \omega_j^{k} \left. \left[ \alpha  J'_{\nuk}(\vt_j^k) + e^{-\alpha \widehat{\mJ'_{\nu_k}}(\vt_j^k)} -  1 \right] \right\vert \mathfrak{F}_k \right]  = \omega_j^{k} 
  \mathbb{E} \left[  \left. \left[ \alpha  J'_{\nuk}(\vt_j^k) + e^{-\alpha \widehat{\mJ'_{\nu_k}}(\vt_j^k)} -  1 \right] \right\vert \mathfrak{F}_k\right].
$$
We then apply Proposition \ref{prop:hoeffding} ang get:
$$
\mathbb{E} \left[ \omega_j^{k}  \left.\left[ \alpha  J'_{\nuk}(\vt_j^k) + e^{-\alpha \widehat{\mJ'_{\nu_k}}(\vt_j^k)} -  1 \right] \right\vert \mathfrak{F}_k\right] \leq \mathfrak{C} \omega_{j}^k
  \alpha^2 (1+\|\nuk\|_{\mathrm{TV}}^2)\,.
$$
We then sum the previous upper bounds from $0$ to $K$ with a global expectation and Proposition \ref{prop:TV_nuk}. We obtain that:
\begin{equation}\label{eq:up_A3}
    \mathbb{E}[A_3] = \frac{\sum_{k=0}^K  \sum_{j=1}^p \mathbb{E} \left[ \omega_j^{k}  \left[ \alpha  J'_{\nuk}(\vt_j^k) + e^{-\alpha \widehat{\mJ'_{\nu_k}}(\vt_j^k)} -  1 \right] \right]}{\alpha (K+1)} \leq \mathfrak{C} \alpha R_0^3.
\end{equation}

\noindent \underline{Step 3: End of the proof.}

\noindent
We gather Equations \eqref{eq:up_A1}, \eqref{eq:up_A2}, \eqref{eq:up_A3} and obtain that:
\[
\mathbb{E} \left[ J(\bar{\nu}_K)-J(\mus) \right]  \leq  \frac{\mathcal{H}(\nu_0,\nu_0^\varepsilon)}{\alpha K}  +   \mathfrak{C} R_0 \|\nu_0^{\varepsilon}-\mus\|_{BL}^*+\mathfrak{C}R_0^2 \left[ \eta \|\nu_0^{\varepsilon}\|_{\mathrm{TV}} K + \alpha R_0\right].
\]
We then use the definition of $\mathcal{Q}$ given in Equation \eqref{def:Q} and observe that if $\nu_0^{\varepsilon}$ is chosen in an optimal way, as given in Proposition \ref{prop:approximation}, then:
\begin{align}
\mathbb{E} \left[ J(\bar{\nu}_K)-J(\mus) \right]  &\leq  R_0 \mathcal{Q}_{\mus,\nu_0}(\alpha K R_0) +\mathfrak{C} R_0^2 \left[ \eta \|\mus\|_{\mathrm{TV}} K + \alpha R_0\right], \label{eq:LemmeF1Chizat}\\
& \leq \mathfrak{C} \|\mus\|_{\mathrm{TV}} \left[\frac{d  \left(1+ \log\frac{\alpha K R_0}{2d}  + \frac{\log |\cX|}{d}\right)  }{\alpha K } + \frac{\alpha R_0^3}{\|\mus\|_{\mathrm{TV}}} + R_0^2 \eta K\right], \nonumber
\end{align}
where we have used the relationship $\|\nu_0^\varepsilon\|_{\mathrm{TV}}=\|\mu^\star\|_{\mathrm{TV}}$. The choices 
$$\alpha=\sqrt{\frac{d \|\mus\|_{\mathrm{TV}}}{R_0^3 K}} \quad \mathrm{and} \quad \eta = \sqrt{\frac{d R_0}{K^{3} \|\mus\|_{\mathrm{TV}}}}$$ 
then leads to
\[
\mathbb{E} \left[ J(\bar{\nu}_K)-J(\mus) \right]   \leq \mathfrak{C} \sqrt{\frac{d \|\mus\|_{\mathrm{TV} }R_0^3}{K}} \left[ \log (d \|\mus\|_{\mathrm{TV}} R_0^3 K) + \frac{\log(|\cX|)}{d}\right].
\]
\end{proof}
\subsection{Proof of Theorem \ref{theo:local_sto}}
 
\begin{proof}
The proof is slitted into three parts, and relies on a contraction argument with conditional expectation.  
In what follows, we will choose $\alpha$  such that $\alpha \mathcal{C}_1 (R_0+1)<1$ where $\mathcal{C}_1$ is involved in Lemma \ref{lem:boundJprime} and $R_0$ has been introduced in Proposition \ref{prop:TV_nuk}. We shall often use the inequality $|e^h-1| \leq 2 |h|$ which is valid when $|h|\leq 1$. We will frequently apply this inequality with $h=- \alpha \widehat{\mJ'_{\nu_k}}(\vt_j^k)$.

\medskip

\noindent 
\underline{Step 1: One-step evolution and second order term.}
Let $k\in \mathbb{N}^\star$ be fixed. According to Proposition \ref{prop:jnuprime}:
\begin{equation}
J(\nu_{k+1}) - J(\nu_k) = \int_\cX J_{\nu_k}' d(\nu_{k+1}-\nu_k) + \frac{1}{2} \| \Phi(\nu_{k+1}-\nu_k)\|_\mathds{H}^2.
\label{eq:dvp_J}
\end{equation}
Introducing the measure $\tilde \nu_{k+1} = \sum_{j=1}^p\omega_{j}^{k+1} \delta_{\bm t_j^k}$, we deduce that:
\begin{eqnarray*}
\| \Phi(\nu_{k+1}-\nu_k)\|_\mathds{H}^2
& = & \| \Phi(\nu_{k+1}-\tilde \nu_{k+1} + \tilde \nu_{k+1} - \nu_k)\|_\mathds{H}^2, \\
& \leq & 2 \| \Phi(\nu_{k+1}-\tilde \nu_{k+1})\|_\mathds{H}^2 + 2\| \Phi(\tilde\nu_{k+1}-\nu_k)\|_\mathds{H}^2
\end{eqnarray*}
First remark that, 
\begin{eqnarray*}
\|\Phi(\nu_{k+1}-\tilde \nu_{k+1})\|_\mathds{H}^2 
& = &  \left\| \sum_{j=1}^p \omega_j^{k+1} (\varphi_{\bm t_j^{k+1}}-\varphi_{\bm t_j^{k}})\right\|_\mathds{H}^2, \\
& \leq & \sum_{j=1}^p \omega_j^{k+1} \times \sum_{j=1}^p \omega_j^{k+1} \| \varphi_{\bm t_j^{k+1}} - \varphi_{\bm t_j^{k}} \|_\mathds{H}^2,\\
& \leq & Lip(\varphi) \| \nu_{k+1}\|_{\mathrm{TV}} \sum_{j=1}^p \omega_j^{k+1}\| \bm t_j^{k+1} - \bm t_j^{k} \|^2.
\end{eqnarray*}
According to \eqref{eq:update_pos_sto} and \eqref{eq:update_generalized_gradient}, we obtain that
\begin{align}
\|\Phi(\nu_{k+1}-\tilde \nu_{k+1})\|_\mathds{H}^2
&\leq  Lip(\varphi) \| \nu_{k+1}\|_{\mathrm{TV}} \ \eta^2 \sum_{j=1}^p {\omega_j^{k+1}} \|\widehat{\mD_{\nu_k}}(\vt_j^k)\|^2.
\label{eq:upper_bound_1}
\end{align}
In the same time, using (\ref{eq:up_w}),
\begin{eqnarray*}
\| \Phi(\tilde\nu_{k+1}-\nu_k)\|_\mathds{H}^2
& = & \left\| \sum_{j=1}^p (\omega_j^{k+1} - \omega_j^{k}) \varphi_{\bm t_j^k} \right\|_\mathds{H}^2, \\
& = & \left\| \sum_{j=1}^p \omega_j^k (e^{-\alpha \widehat{\mJ'_{\nu_k}}(\vt_j^k) }-1) \varphi_{\bm t_j^k} \right\|_\mathds{H}^2, \\
& \leq & \sum_{j=1}^p \omega_{j}^{k} \times \sum_{j=1}^p \omega_{j}^{k} (e^{-\alpha \widehat{\mJ'_{\nu_k}}(\vt_j^k) }-1)^2 \|\varphi_{\bm t_j^k} \|_\mathds{H}^2,
\end{eqnarray*}
where the last line comes from the Jensen inequality. We then observe from Lemma \ref{lem:boundJprime} that the terms $J_{\nu_k}'(\vt_j^k,Z_l^{k+1})$ are bounded. Hence, provided $\alpha \mathcal{C}_1 (R_0+1)<1$, 
a constant  $C_{\varphi}$ large enough exists such that:
\begin{equation}\label{eq:upper_bound_2}
\| \Phi(\tilde\nu_{k+1}-\nu_k)\|_\mathds{H}^2 \leq   C_{\varphi} \|\nu_k\|_{\mathrm{TV}}  \alpha^2    \sum_{j=1}^p \omega_j^k | \widehat{\mJ'_{\nu_k}}(\vt_j^k) |^2 .
\end{equation}
\noindent 
Gathering Equations \eqref{eq:upper_bound_1} and  \eqref{eq:upper_bound_2}, we then deduce that

\begin{equation}\label{eq:upper_bound_ordre_2}
\| \Phi(\nu_{k+1}-\nu_k)\|_\mathds{H}^2 \leq C_{\varphi}  \left( \eta^2\|\nu_{k+1}\|_{\mathrm{TV}} \sum_{j=1}^p {\omega_j^{k+1}} \|\widehat{\mD_{\nu_k}}(\vt_j^k)\|^2 + \alpha^2  \|\nu_k\|_{\mathrm{TV}}  \sum_{j=1}^p \omega_j^k | \widehat{\mJ'_{\nu_k}}(\vt_j^k) |^2 \right) .
\end{equation}

\noindent 
\underline{Step 2: Study of the drift first order term.}
We expand the first order term and observe that:
\begin{align*}
    \int_\cX J_{\nu_k}' d(\nu_{k+1}-\nu_k) & = \sum_{j=1}^p \left[(\omega_j^{k+1} - \omega_j^k) J_{\nu_k}'(\vt_j^k)  + \omega_j^k (J_{\nu_k}'(\vt_j^{k+1})-J_{\nu_k}'(\vt_j^{k}))\right]  \\
    & \quad +  \sum_{j=1}^p (\omega_j^{k+1}-\omega_j^{k})  (J_{\nu_k}'(\vt_j^{k+1})-J_{\nu_k}'(\vt_j^{k})), \\
    & =  \sum_{j=1}^p \left[(\omega_j^{k+1} - \omega_j^k) J_{\nu_k}'(\vt_j^k)  + \omega_j^k \langle \vt_j^{k+1} - \vt_j^{k} , \nabla J_{\nu_k}' (\vt_j^{k}) \rangle \right] \\ 
    & \quad +  \sum_{j=1}^p \left[\omega_j^k \langle \vt_j^{k+1} - \vt_j^{k} , \nabla^2 J_{\nu_k}' (\upsilon_j^{k})(\vt_j^{k+1} - \vt_j^{k}) \rangle  +
    (\omega_j^{k+1}-\omega_j^{k})  \langle \nabla J_{\nu_k}'(\tilde{\upsilon}_j^{k}), \vt_j^{k+1} -\vt_j^{k} \rangle\right],
\end{align*}
where $\upsilon_j^{k}$ and $\tilde{\upsilon}_j^{k}$ are some auxiliary points that belong to $(\mathbf{t}_j^k,\mathbf{t}_j^{k+1})$ obtained with the help of first and second order  Taylor expansions.
Using Proposition \ref{prop:bornes_techniques_J}, we deduce that:

\begin{align*}
    \int_\cX J_{\nu_k}' d(\nu_{k+1}-\nu_k) & \leq \sum_{j=1}^p \left[(\omega_j^{k+1} - \omega_j^k) J_{\nu_k}'(\vt_j^k)  + \omega_j^k \langle \vt_j^{k+1} - \vt_j^{k} , \nabla J_{\nu_k}' (\vt_j^{k}) \rangle\right] \\ 
& + \sum_{j=1}^p \omega_j^k {\| \nabla^2 J_{\nu_k}' \|_{\infty,op}} \|\vt_j^{k+1}-\vt_j^k\|^2 \\ 
& + \sum_{j=1}^p  |\omega_j^{k+1}-\omega_j^k| \times \| \nabla J_{\nu_k}'\| \|\vt_j^{k+1}-\vt_j^k\|\,.
\end{align*}
Using the total variation upper bound stated in Proposition \ref{prop:TV_nuk} by $R_0$, we then define for the sake of readability the constant $A$ as: 
\[
{A= (\|\nu\|_{\mathrm{TV}} \|\varphi\|_{\infty,\mathbb{H}}+\|y\|_\mathbb{H}) \| \nabla^2\varphi\|_{\infty,op} \vee  (\|\nu_k\|_{\mathrm{TV}} \|\varphi\|_{\infty,\mathbb{H}}+\|y\|_\mathbb{H}) \|\varphi'\|_\mathbb{H}.}
\]
We then derive:
\begin{align*}
    \int_\cX J_{\nu_k}' d(\nu_{k+1}-\nu_k) & \leq \sum_{j=1}^p (\omega_j^{k+1} - \omega_j^k) J_{\nu_k}'(\vt_j^k)  + \omega_j^k \langle \vt_j^{k+1} - \vt_j^{k} , \nabla J_{\nu_k}' (\vt_j^{k}) \rangle \\ &+ A  \sum_{j=1}^p \left[\omega_j^k  \|\vt_j^{k+1}-\vt_j^k\|^2+(\omega_j^{k+1}-\omega_j^k) \|\vt_j^{k+1}-\vt_j^k\|\right].
\end{align*}
We now use the surrogate update \eqref{eq:update_pos_sto} on the previous inequality and obtain that:
\begin{align*}
    \int_\cX J_{\nu_k}' d(\nu_{k+1}-\nu_k) & \leq  \sum_{j=1}^p   \omega_j^k (e^{-\alpha \widehat{\mJ'_{\nu_k}}(\vt_j^k) }-1) J_{\nu_k}'(\vt_j^k) { +   \omega_j^k \langle \vt_j^{k+1}-\vt_j^k,  \nabla J_{\nu_k}' (\vt_j^{k}) \rangle }\\
    & {+  A \sum_{j=1}^p   \omega_j^k \|\vt_j^{k+1}-\vt_j^k\|^2}  {+    \omega_j^k (e^{-\alpha \widehat{\mJ'_{\nu_k}}(\vt_j^k) }-1)  \|\vt_j^{k+1}-\vt_j^k\|}.
\end{align*}
{We pay a specific attention to the second term of the right hand side. Using the generalized projected gradient introduced in Section \ref{sec:fastpartpro} and in particular \eqref{eq:update_generalized_gradient}, we get for any $j\in \lbrace 1,\dots, p \rbrace$},
\begin{align*}
  \omega_j^k &\langle \vt_j^{k+1}-\vt_j^k,  \nabla J_{\nu_k}' (\vt_j^{k}) \rangle \\
  &= 
- \eta \omega_j^k \left\langle P_{\cX}(\vt_j^k,\widehat{\mD_{\nu_k}}(\vt_j^k) ,\eta) ,  \widehat{\mD_{\nu_k}}(\vt_j^k) \right\rangle  + \eta \omega_j^k \left\langle P_{\cX}(\vt_j^k,\widehat{\mD_{\nu_k}}(\vt_j^k) ,\eta)  , \widehat{\mD_{\nu_k}}(\vt_j^k)-\nabla J_{\nu_k}' (\vt_j^{k}) \right\rangle \\
& \leq - \eta \omega_j^k \left\|P_{\cX}(\vt_j^k,\widehat{\mD_{\nu_k}}(\vt_j^k) ,\eta) \right\|^2  + 
\eta \omega_j^k \left\langle P_{\cX}(\vt_j^k,\widehat{\mD_{\nu_k}}(\vt_j^k) ,\eta)  , \widehat{\mD_{\nu_k}}(\vt_j^k)-\nabla J_{\nu_k}' (\vt_j^{k}) \right\rangle \\
& = - \eta \omega_j^k \left\|P_{\cX}(\vt_j^k,\widehat{\mD_{\nu_k}}(\vt_j^k) ,\eta) \right\|^2  + 
\eta \omega_j^k \left\langle P_{\cX}(\vt_j^k,\nabla J_{\nu_k}' (\vt_j^{k}) ,\eta)  , \widehat{\mD_{\nu_k}}(\vt_j^k)-\nabla J_{\nu_k}' (\vt_j^{k}) \right\rangle \\
& \qquad + \eta \omega_j^k \left\langle P_{\cX}(\vt_j^k,\widehat{\mD_{\nu_k}}(\vt_j^k) ,\eta)  - P_{\cX}(\vt_j^k,\nabla J_{\nu_k}' (\vt_j^{k}) ,\eta)  , \widehat{\mD_{\nu_k}}(\vt_j^k)-\nabla J_{\nu_k}' (\vt_j^{k}) \right\rangle 
\end{align*} 
where for the first inequality, we have used Lemma \ref{lem:Ghadimi-Lan-Zhang} and simple algebraic manipulations hereafter. 
Using that $P_{\cX}$ is $1$-Lip (see Lemma \ref{lem:Ghadimi-Lan-Zhang2}) and the Cauchy-Schwarz inequality, we obtain
\begin{align*}
  \omega_j^k \langle \vt_j^{k+1} -\vt_j^k,  \nabla J_{\nu_k}' (\vt_j^{k}) \rangle 
  & \leq - \eta \omega_j^k \left\|P_{\cX}(\vt_j^k,\widehat{\mD_{\nu_k}}(\vt_j^k) ,\eta) \right\|^2   \\
  &  + \eta \omega_j^k \left\langle P_{\cX}(\vt_j^k,\nabla J_{\nu_k}' (\vt_j^{k}) ,\eta)  , \widehat{\mD_{\nu_k}}(\vt_j^k)-\nabla J_{\nu_k}' (\vt_j^{k}) \right\rangle + \eta \omega_j^k \left\| \widehat{\mD_{\nu_k}}(\vt_j^k)-\nabla J_{\nu_k}' (\vt_j^{k})\right\|^2.
\end{align*} 
Finally, for any $j\in \lbrace 1,\dots, p \rbrace$, we have
\begin{eqnarray*}
\mathbb{E}\left[\omega_j^k \langle \vt_j^{k+1} -\vt_j^k,  \nabla J_{\nu_k}' (\vt_j^{k}) \rangle \big\vert \mathfrak{F}_k \right] 
& \leq & - \eta \omega_j^k \mathbb{E} \left[ \left\|P_{\cX}(\vt_j^k,\widehat{\mD_{\nu_k}}(\vt_j^k) ,\eta) \right\|^2 \big\vert \mathfrak{F}_k \right] + \eta\omega_j^k \mathbb{E}\left[ \left\| \widehat{\mD_{\nu_k}}(\vt_j^k)-\nabla J_{\nu_k}' (\vt_j^{k})\right\|^2 \big\vert \mathfrak{F}_k\right]  , \\
& \leq & - \eta \omega_j^k \mathbb{E} \left[ \left\|\widehat{\mD_{\nu_k}}(\vt_j^k) \right\|^2 \big\vert \mathfrak{F}_k \right] + \eta\omega_j^k \mathbb{E}\left[ \left\| \widehat{\mD_{\nu_k}}(\vt_j^k)-\nabla J_{\nu_k}' (\vt_j^{k})\right\|^2 \big\vert \mathfrak{F}_k\right]  ,\\
& \leq &  - \eta \omega_j^k \mathbb{E} \left[ \left\|\nabla J_{\nu_k}' (\vt_j^{k}) \right\|^2 \big\vert \mathfrak{F}_k \right] + 2\eta\omega_j^k \mathbb{E}\left[ \left\| \widehat{\mD_{\nu_k}}(\vt_j^k)-\nabla J_{\nu_k}' (\vt_j^{k})\right\|^2 \big\vert \mathfrak{F}_k\right]  .
\end{eqnarray*}
At this step, we can take advantage of the mini-batch step. Indeed, according to \eqref{eq:minibatch}, we have
\begin{eqnarray*}
\mathbb{E}\left[ \left\| \widehat{\mD_{\nu_k}}(\vt_j^k)-\nabla J_{\nu_k}' (\vt_j^{k})\right\|^2 \big\vert \mathfrak{F}_k\right]    
& = & \frac{1}{m_k^2} \sum_{l=1}^{m_k} \mathbb{E} \left[ \left\| D_{\nu_k}(\vt_j^k,Z_l^k)\right\|^2 \big\vert \mathfrak{F}_k \right], \\
& \leq & \frac{\|\nu_k\|_{\mathrm{TV}}\| \mathbf{g}'\|_{\infty,\mathbb{H}}+ \| \mathbf{h}'\|_{\mathbb{H}}}{m_k}.
\end{eqnarray*}
Hence 
\[
\mathbb{E}\left[\omega_j^k \langle \vt_j^{k+1} -\vt_j^k,  \nabla J_{\nu_k}' (\vt_j^{k}) \rangle \big\vert \mathfrak{F}_k \right]  \leq 2\eta\omega_j^k \frac{\|\nu_k\|_{\mathrm{TV}}\| \mathbf{g}'\|_{\infty,\mathbb{H}}+ \| \mathbf{h}'\|_{\mathbb{H}}}{m_k}.
\]

We then consider the conditional expectation at time $k$ and apply Proposition \ref{prop:bornes_techniques_J} (to upper bound some rest terms) and Proposition \ref{prop:hoeffding} (to control the drift at iteration $k$). We deduce that a large enough $\mathfrak{C}$ such that:
\begin{eqnarray*}
\lefteqn{ \mathbb{E}  \left[ 
    \int_\cX J_{\nu_k}' d(\nu_{k+1}-\nu_k)\big\vert \mathfrak{F}_k \right]}\\
    & \leq & - \alpha \sum_{j=1}^p   \omega_j^k  J_{\nu_k}'(\vt_j^k)^2 + \mathfrak{C} \alpha^2  \sum_{j=1}^p   \omega_j^k  (1+\|\nuk\|_{\mathrm{TV}})^3   - \eta \sum_{j=1}^p  \omega_j^k \|\nabla J_{\nu_k}' (\vt_j^{k}) \|^2 \\
    & &+ \mathfrak{C}  \eta^2 \sum_{j=1}^p   \omega_j^k  (1+\|\nuk\|_{\mathrm{TV}})^2  + 
     \mathfrak{C} \eta \alpha \sum_{j=1}^p \omega_j^k  (1+\|\nuk\|_{\mathrm{TV}})^3
     {+ \mathfrak{C} \frac{\eta}{m_k} \sum_{j=1}^p \omega_j^k,} \\
    & \leq &  - \alpha \sum_{j=1}^p   \omega_j^k  J_{\nu_k}'(\vt_j^k)^2   - \eta \sum_{j=1}^p
    \omega_j^k \|\nabla J_{\nu_k}' (\vt_j^{k}) \|^2 \\
    & & +  \mathfrak{C} \|\nuk\|_{\mathrm{TV}}(1+\|\nuk\|_{\mathrm{TV}})^3 \left(\alpha^2+\eta^2 + {\frac{\eta}{m_k}}\right),
\end{eqnarray*}
where in the last line we used the Young inequality $2 \eta \alpha \leq  \alpha^2+  \eta^2$ and some rough upper bounds on the rest terms.
We now associate this last inequality with Equations \eqref{eq:upper_bound_ordre_2} and obtain the descent property:
\begin{equation}\label{eq:descente_J}
     \mathbb{E}  \left[ J(\nukp) \big\vert \mathfrak{F}_k \right]   \leq J(\nuk) - \alpha  \|J'_{\nuk}\|^2_{\nuk} - \eta  \|\nabla J'_{\nuk}\|^2_{\nuk} +
      \mathfrak{C} (1+R_0^4) \left(\alpha^2+\eta^2 {+ \frac{\eta}{m_k}}\right)
\end{equation}

\noindent 
\underline{Step 3: Conclusion of the proof.}
The rest of the proof proceeds with a standard argument. We use a telescopic sum + conditional expectation strategy and observe that:
$$
\alpha \sum_{k=1}^K \mathbb{E}[\|J'_{\nuk}\|^2_{\nuk}]  + \eta \sum_{k=1}^K \mathbb{E}[\|\nabla J'_{\nuk}\|^2_{\nuk}] \leq J(\nu_0) + \mathfrak{C} (1+R_0^4) \left(K \alpha^2+K \eta^2 {+ \eta\sum_{k=1}^K \frac{1}{m_k}}\right).
$$
Choosing $\alpha=\eta$, we deduce that:
$$
\frac{1}{K} \sum_{k=1}^K \left( \mathbb{E}[\|J'_{\nuk}\|^2_{\nuk}]  +  \mathbb{E}[\|\nabla J'_{\nuk}\|^2_{\nuk}]\right) \leq \frac{J(\nu_0)}{\alpha K} + \mathfrak{C} (1+R_0^4)  \alpha {+\frac{1}{K}\sum_{k=1}^K \frac{\mathfrak{C}}{m_k}}.
$$
Finally, if $\tau_K$ refers to a random variable uniformly distributed over $\{1,\ldots,K\}$, independent from the sequence $(\nuk)_{k \ge 1}$, the tuning $\alpha=\eta=1/\sqrt{K}$ {and $m_k = \sqrt{K}$ for all $k\in \lbrace 1,\dots, K \rbrace$}  yields:
$$
\mathbb{E}\left[ \|J'_{\nu_{\tau_K}}\|^2_{\nu_{\tau_K}} + \|\nabla J'_{\nu_{\tau_K}}\|^2_{\nu_{\tau_K}} \right]  \leq  \frac{J(\nu_0)+\mathfrak{C} (1+R_0^4)}{\sqrt{K}}.
$$
\end{proof}
\vfill

\paragraph{Acknowledgements} The authors would like to thank N. Jouvin for its remarks and time on the numerical aspects of this project. They are also in debt with L. Chizat for valuable discussions on CPGD during a seminar at Institut Henri Poincaré.  
\newpage

 \appendix

\bibliographystyle{plainnat}
\bibliography{biblio}

\newpage

\appendix

\begin{center}
    {\LARGE Appendix of ‘‘FastPart: Over-Parameterized Stochastic Gradient Descent for Sparse optimization on Measures"}
\end{center}

\bigskip

\section{Reformulations, proofs and technical lemmas \label{app:technical}}

\subsection{The non-separable case and the interesting reformulation of the objective}
There is a little subtlety on the properties that one should require on $\bbH$. At first glance, we need $\bbH$ separable to prove Bochner integrability in Lemma~\ref{lem:KME}. But $K$ is continuous on a compact space $\cX$, hence its RKHS is separable \citep[Lemma 4.3.3]{steinwart2008support}. This RKHS is isometric to a separable subspace of $\bbH$ as proven by the next lemma.

\begin{lemma}
    \label{lem:non_separable}
    Let $\bbH$ be Hilbert space and let $\cX$ be compact space. Under \eqref{ass:kernel_continuity}, there exists a separable cloded vector subspace $(\bbH_{\mathcal F},\|\cdot\|_\bbH)$ of $\bbH$ which is isometric to $({\mathcal F},\|\cdot\|_{\mathcal F})$, the RKHS defined by $K$. Denote by $\Pi$ the orthogonal projection onto $\bbH_{\mathcal F}$ then for any $\nu\in\measet$ 
    \begin{equation}
        \label{def:J_separable}
        J(\nu)=\frac12\|\vy - \Pi(\vy)\|_\bbH^2 + \frac12\|\Pi(\vy) - \Phi(\nu)\|_{\bbH_{\mathcal F}}^2+\lambda\|\nu\|_{\mathrm{TV}}\,.
    \end{equation}
\end{lemma}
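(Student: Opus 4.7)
The plan is to define $\bbH_{\mathcal F}$ as the closed linear span in $\bbH$ of the feature vectors $\{\varphi_\vt:\vt\in\cX\}$, verify its separability, exhibit a natural Hilbert-space isometry between $\bbH_{\mathcal F}$ and the abstract RKHS $\mathcal F$ associated with $K$, and finally deduce \eqref{def:J_separable} from a Pythagoras argument once we know that $\Phi(\nu)$ lies in $\bbH_{\mathcal F}$.

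\textbf{Step 1 (construction and separability).} Since $\cX$ is compact metric, it is separable; pick a countable dense sequence $(\vt_n)_{n\geq 1}\subset\cX$. Under \eqref{ass:kernel_continuity} the map $\vt\mapsto\varphi_\vt$ is continuous, so $\{\varphi_{\vt_n}\}$ is dense in $\{\varphi_\vt:\vt\in\cX\}$. Define $\bbH_{\mathcal F}$ as the closure in $\bbH$ of the rational linear span of $\{\varphi_{\vt_n}\}$; it is a separable closed subspace of $\bbH$ that contains every $\varphi_\vt$ (and hence the whole closed linear span of the feature family).

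\textbf{Step 2 (isometry).} The RKHS $\mathcal F$ of $K$ is, by definition, the completion of the pre-Hilbert space of finite sums $\sum_i a_i K(\cdot,\vt_i)$ under $\langle K(\cdot,\vt),K(\cdot,\vt')\rangle_{\mathcal F}=K(\vt,\vt')$. Set $\mathbb i\bigl(\sum_i a_i K(\cdot,\vt_i)\bigr):=\sum_i a_i\varphi_{\vt_i}$. The identity $\langle\varphi_\vt,\varphi_{\vt'}\rangle_\bbH=K(\vt,\vt')$ shows that $\mathbb i$ preserves inner products on the dense pre-Hilbert subspace, hence extends uniquely to a linear isometry from $\mathcal F$ onto $\bbH_{\mathcal F}$. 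In particular $\|\cdot\|_\bbH$ restricted to $\bbH_{\mathcal F}$ agrees with the transported RKHS norm $\|\cdot\|_{\bbH_{\mathcal F}}$.

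\textbf{Step 3 (decomposition of $J$).} Because $\bbH_{\mathcal F}$ is closed and $\vt\mapsto\varphi_\vt$ takes values in $\bbH_{\mathcal F}$, the Bochner integral $\Phi(\nu)=\int_\cX\varphi_\vt\,\mathrm d\nu(\vt)$ of Lemma~\ref{lem:KME} lies in $\bbH_{\mathcal F}$ for every $\nu\in\measet$. Writing
\[
\vy-\Phi(\nu) \;=\; \bigl(\vy-\Pi(\vy)\bigr)\;+\;\bigl(\Pi(\vy)-\Phi(\nu)\bigr),
\]
the first summand lies in $\bbH_{\mathcal F}^\perp$ and the second in $\bbH_{\mathcal F}$. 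Pythagoras then yields
\[
\|\vy-\Phi(\nu)\|_\bbH^2 \;=\; \|\vy-\Pi(\vy)\|_\bbH^2 \;+\; \|\Pi(\vy)-\Phi(\nu)\|_\bbH^2,
\]
and Step~2 lets us replace the last $\|\cdot\|_\bbH$ by $\|\cdot\|_{\bbH_{\mathcal F}}$. Dividing by two and adding $\lambda\|\nu\|_{\mathrm{TV}}$ gives \eqref{def:J_separable}.

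\textbf{Main obstacle.} The only genuinely nontrivial point is the claim that $\Phi(\nu)\in\bbH_{\mathcal F}$; once that is established, the isometry is a routine reproducing-kernel computation and the decomposition reduces to an orthogonality argument. The membership follows from the general fact that a Bochner integral of a function valued in a closed subspace remains in that subspace, applied to the $\bbH_{\mathcal F}$-valued continuous map $\vt\mapsto\varphi_\vt$ on the compact $\cX$, so the separability prerequisites of Bochner integration established in Lemma~\ref{lem:KME} are exactly what we need.
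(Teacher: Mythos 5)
Your proof is correct and follows essentially the same route as the paper's: identify $\bbH_{\mathcal F}$ with the closed linear span of the feature vectors $\{\varphi_\vt\}$, observe that matching kernel sections to feature vectors preserves inner products and hence extends to an isometry of $\mathcal F$ onto $\bbH_{\mathcal F}$, and conclude with the Pythagorean theorem. The only cosmetic differences are that you obtain separability directly from a countable dense subset of the compact $\cX$ and the continuity of $\vt\mapsto\varphi_\vt$ (the paper instead imports separability of $\mathcal F$ from \cite[Lemma 4.3.3]{steinwart2008support} and transports it through the isometry), and that you spell out the -- correct -- observation that the Bochner integral $\Phi(\nu)$ remains in the closed subspace $\bbH_{\mathcal F}$, a point the paper leaves implicit in its appeal to Pythagoras.
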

\begin{proof}
We denote by $\mathcal F$ the RKHS defined by $K$. By \citep[Theorem 4.21]{steinwart2008support}, one has that
\[
\mathcal F:=
\Big\{
f:\cX\to\bbH\ :\ \exists h\in \bbH\,,\ 
\forall t\in\cX\,,\ f(x)=\langle h,\Phi(t)\rangle_{\bbH}
\Big\}\,.
\]
is the only RKHS defined by $K$ and 
\[
||f||_{\mathcal F}=\inf\Big\{
|| h ||_\bbH\ :\ h\in \bbH\ \ \text{s.t. }f(\cdot)=\langle h,\Phi(\cdot)\rangle_{\bbH}
\Big\}\,,
\]

Consider the functions 
\[
f_{h}\,:\, x\mapsto \langle h,\, \Phi(x)\rangle_\bbH\,,
\quad 
h = \sum_{j=1}^r\omega_j \Phi(t_j)
\]
defining the pre-dual of $\mathcal F$. Observe that 
\[
\langle f_{h_1},\, f_{h_2}\rangle_{\mathcal F}
=
\langle h_1,\, h_2\rangle_\bbH\,,
\]
where we denote by $\langle \cdot,\cdot\rangle_{\mathcal F}$, the dot product of $\mathcal F$. Define $\bbH_{\mathcal F}$ the vector subspace of $\bbH$ defined as the closure (in $\bbH$) of the span of $\Phi(\cX)$. The aforementioned equality shows that $h\mapsto f_{h}$ is an isometry from $(\bbH_{\mathcal F},\|\cdot\|_\bbH)$ onto $({\mathcal F},\|\cdot\|_{\mathcal F})$. Since ${\mathcal F}$ is separable \citep[Lemma 4.3.3]{steinwart2008support}, we deduce that $\bbH_{\mathcal F}$ is separable. The last statement is a consequence of the Pythagorean theorem.
\end{proof}

Note that in \eqref{def:J_separable}, the term $\frac12\|\vy - \Pi(\vy)\|_\bbH^2$ is constant. Hence, up to a constant term, and without loss of generality, one can assume that $\bbH$ is separable. 

\begin{remark}
    The proof of Lemma~\ref{lem:non_separable} is a consequence of \citep[Theorem~4.21]{steinwart2008support} and we choose to maintain it in this paper for sake of completeness. Moreover, it sheds light on an interesting reformulation of the quadratic term in \eqref{def:J}, the objective $J$. Indeed, it holds 
    \begin{equation}
        \label{eq:reformulation_J}
        J(\nu)=\frac12\Big\|(\mathbb{i}\circ\Pi)(\vy) - (\mathbb{i}\circ\Phi)(\nu)\Big\|_{{\mathcal F}}^2+\lambda\|\nu\|_{\mathrm{TV}}\,,
    \end{equation}
    up to a constant term and where $\mathbb{i}$ denotes the isometry between $(\bbH_{\mathcal F},\|\cdot\|_\bbH)$ and $({\mathcal F},\|\cdot\|_{\mathcal F})$.
\end{remark}

\subsection{Existence of the kernel measure embedding}
\label{app:kme}
Kernel mean embedding is a standard notion in Machine Learning, see for instance \cite{muandet2017kernel}. Extending this notion of measure with finite total variation norm is straightforward. We referred to this notion as {\it Kernel Measure Embedding} as the two notions coincides on probability measures. 
\begin{lemma}
    \label{lem:KME}
    Let $\bbH$ be separable Hilbert space and let $\cX$ be compact metric space. Under \eqref{ass:kernel_continuity}, the operator~$\Phi$ defined by~\eqref{def:Phi} is well defined and bounded linear as a function from $\measet$ to $\bbH$.
    Furthermore, the dual of $\Phi$ is given by
    \begin{equation}
        \label{eq:dual_Phi}
        \Phi^\star\,:\,h\in\bbH\mapsto \big(t\mapsto \langle h,\varphi_t\rangle_\bbH\big)\in\big(\mathcal{C}(\cX),\|\cdot\|_\infty\big)\,,
    \end{equation}
    and for any $(h,\nu)\in\bbH\times\measet$, 
    \begin{equation}
        \label{eq:fubini}
    \langle h,\Phi(\nu)\rangle_\bbH
    =\int_\cX\langle h, \varphi_t\rangle_\bbH\mathrm{d}\nu(t)
    =\langle\Phi^\star(h),\nu\rangle_{\mathcal{C}(\cX),\measet}
    \leq \sup_t\sqrt{\Kernel(t,t)} \|h\|_\bbH \|\nu\|_{\mathrm{TV}}
    \,.
    \end{equation}
\end{lemma}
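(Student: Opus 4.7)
The statement is essentially a repackaging of standard Bochner-integration facts in the particular setting at hand. My plan is to construct $\Phi(\nu)$ as a Bochner integral against $\nu$, check linearity and boundedness from the triangle inequality for Bochner integrals, then read off the expression of $\Phi^\star$ and the Fubini-type identity as direct corollaries.

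First I would verify Bochner integrability of $t\mapsto\varphi_t$ against any $\nu\in\measet$. Under \eqref{ass:kernel_continuity}, the map $t\in\cX\mapsto\varphi_t\in\bbH$ is continuous, so its image is a compact (in particular separable) subset of $\bbH$; combined with Borel measurability on the compact metric space $\cX$, this gives strong measurability. The scalar norm $t\mapsto\|\varphi_t\|_\bbH=\sqrt{\Kernel(t,t)}$ is continuous on the compact $\cX$, hence bounded by $M:=\sup_{t\in\cX}\sqrt{\Kernel(t,t)}<\infty$. For any $\nu\in\measet$ we then get $\int_\cX\|\varphi_t\|_\bbH\,\mathrm d|\nu|(t)\leq M\|\nu\|_{\mathrm{TV}}<\infty$, so $t\mapsto\varphi_t$ is Bochner $\nu$-integrable and $\Phi(\nu)=\int_\cX\varphi_t\,\mathrm d\nu(t)\in\bbH$ is well defined. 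Linearity in $\nu$ is immediate from linearity of the Bochner integral in the measure, while the standard norm inequality for Bochner integrals gives $\|\Phi(\nu)\|_\bbH\leq M\|\nu\|_{\mathrm{TV}}$, so $\Phi:\measet\to\bbH$ is bounded linear.

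Next I would identify the dual. For $h\in\bbH$, set $\Phi^\star(h)(t):=\langle h,\varphi_t\rangle_\bbH$. Continuity of $\Phi^\star(h)$ on $\cX$ follows from continuity of $t\mapsto\varphi_t$ and Cauchy--Schwarz, and $\|\Phi^\star(h)\|_\infty\leq M\|h\|_\bbH$, so $\Phi^\star$ is a bounded linear map $\bbH\to(\mathcal C(\cX),\|\cdot\|_\infty)$, as required in \eqref{eq:dual_Phi}. The Fubini-type identity \eqref{eq:fubini} is a defining property of the Bochner integral: continuous linear functionals commute with it. Applying this to $\ell:=\langle h,\cdot\rangle_\bbH$ yields
\[
\langle h,\Phi(\nu)\rangle_\bbH=\ell\!\left(\int_\cX\varphi_t\,\mathrm d\nu(t)\right)=\int_\cX\langle h,\varphi_t\rangle_\bbH\,\mathrm d\nu(t)=\langle\Phi^\star(h),\nu\rangle_{\mathcal C(\cX),\measet},
\]
and the final inequality in \eqref{eq:fubini} follows by Cauchy--Schwarz together with $\|\varphi_t\|_\bbH\leq M$ integrated against $|\nu|$.

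The only real subtlety, and hence the ``main obstacle'', is justifying strong measurability of $t\mapsto\varphi_t$: this is precisely where we use that the target is a \emph{separable} subspace of $\bbH$ (provided by the range being compact in the separable closed subspace $\bbH_{\mathcal F}$ of Lemma~\ref{lem:non_separable}), so that Pettis's theorem reduces strong measurability to weak measurability, which is trivial here by continuity. Once this is in place, everything else is a mechanical application of the Bochner-integral toolbox.
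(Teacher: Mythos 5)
Your proof is correct, and its skeleton matches the paper's: the same norm bound $\int_\cX\|\varphi_t\|_\bbH\,\mathrm d|\nu|(t)\leq \sup_t\sqrt{\Kernel(t,t)}\,\|\nu\|_{\mathrm{TV}}$, the same identification of $\Phi^\star$, and the same passage of the linear functional $\langle h,\cdot\rangle_\bbH$ through the integral to get \eqref{eq:fubini}. The one place where you genuinely diverge is in how Bochner integrability of $t\mapsto\varphi_t$ is established. You invoke the standard criterion (strong measurability via continuity, separable/compact range, and Pettis's theorem, plus integrability of the norm), which is a clean citation-level argument. The paper instead re-derives this criterion by hand in the case at hand: it pushes $|\nu|$ forward to a finite Borel measure on $\bbH$, uses tightness (Oxtoby--Ulam) to extract compact sets, partitions them into sets of small diameter, and explicitly builds simple functions $f_n$ with $\int_\cX\|\varphi_t-f_n(t)\|_\bbH\,\mathrm d\nu(t)\to 0$; the Fubini identity is then obtained by passing to the limit along these $f_n$ rather than by quoting the fact that bounded functionals commute with the Bochner integral. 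Your route is shorter and relies on standard theorems; the paper's is self-contained and makes the approximating sequence explicit, which it then reuses to prove \eqref{eq:fubini}. Both are valid, and your aside about the range living in the separable subspace $\bbH_{\mathcal F}$ is harmless but unnecessary here, since the lemma already assumes $\bbH$ separable (and a compact range is separable in any case).
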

\begin{remark}
\label{rem:weak_weak}
    A key result of Lemma~\ref{lem:KME} is that $\mathrm{Im}(\Phi^\star)\subseteq\big(\mathcal{C}(\cX),\|\cdot\|_\infty\big)$, this latter being a subset of~$\measet^\star$, the topological dual of $\measet$. {Strictly speaking, the dual of~$\Phi$ maps to the dual of the space of measures and the right manner to expose this results is as it is done by \cite{Bredis_Pikkarainen_13}, using the predual operator~$\Phi_\star$ which satisfies~$(\Phi_\star)^\star=\Phi$ and~$\Phi^\star= \imath \Phi_\star$ where~$\imath$ denotes the canonical embedding of continuous functions $(\mathcal{C}(\cX),\|\cdot\|_\infty\big)$ into the dual of measures.}
\end{remark}
\begin{proof}
Let $\nu\in\measet$. We say that $t\in\cX\mapsto f(x)\in\bbH$ is {\it simple} if it is finitely valued, namely
\[
f(t)=\sum_{i=1}^nh_i\mathbf{1}_{\{t\in B_i\}}\,,
\]
for some $n\geq 1$, $h_i\in\bbH$, and $B_i$ Borel set of $\cX$. In this case, one has
\[
\int_\cX f\mathrm{d}\nu=\sum_{i=1}^nh_i\nu(B_i)\,.
\]

Note that $\|\varphi_t\|_\bbH=\sqrt{\Kernel(t,t)}$ and, it holds that
\begin{equation}
    \label{eq:finite_H_norm}
    \int_\cX \|\varphi_t\|_\bbH\mathrm{d}\nu(t)\leq \sup_t\sqrt{\Kernel(t,t)}\|\nu\|_{\mathrm{TV}}<\infty\,,
\end{equation}
using the fact that $t\in\cX\mapsto\sqrt{\Kernel(t,t)}$ is a bounded continuous function by \eqref{ass:kernel_continuity}. We emphasize that this function not need to be vanishing at infinity. 

From \eqref{eq:finite_H_norm}, we deduce that the map $m\,:\, A\mapsto m(A):=\int_\cX \|\varphi_t\|_\bbH\mathbf{1}_{\{\varphi_t\in A\}}\mathrm{d}\nu(t)$ is a finite measure on the Borel sets of $\bbH$ and hence, by Oxtoby-Ulam theorem (see \cite[Proposition 2.1.4]{gine2021mathematical} for instance), a tight Borel measure. Given $0 < \varepsilon_n \to0$, let $K_n$ be a compact set such that $m(K^c_n)<\varepsilon_n/2$, let $A_{n,1},\ldots,A_{n,{k_n}}$ be a finite partition of $K_n$ consisting of sets of diameter at most $\varepsilon_n/2$, pick up a point $h_{n,k} \in A_{n,{k}}$ for each $k$ and define the simple function 
\[
f_n(t) = \sum_{k=1}^{k_n} h_{n,k} \mathbf{1}_{\{\varphi_t\in A_{n,k}\}}\,.
\]
Then 
\[
\int_\cX \|\varphi_t-f_n(t)\|_\bbH\mathrm{d}\nu(t)\leq \varepsilon_n/2+m(K^c_n)<\varepsilon_n\to0\,,
\]
showing that $t\in\cX\mapsto\varphi_t\in\bbH$ is Bochner integrable, hence Petti's integrable, and both integrals coincide (see for instance \cite[Section 2.6.1]{gine2021mathematical}). We deduce that $\Phi$ is well defined, using Bochner integration. Furthermore, one can deduce that 
\[
\Big\|\int_\cX \varphi_t\mathrm{d}\nu(t)\Big\|_\bbH\leq \int_\cX \|\varphi_t\|_\bbH\mathrm{d}\nu(t)\leq \sup_t\sqrt{\Kernel(t,t)}\|\nu\|_{\mathrm{TV}}\,,
\]
showing that $\Phi$ is bounded linear.

Also, if $h\in\bbH$ then 
\[
\big|\int_\cX\langle h, f_n(t)\rangle_\bbH -\langle h, \varphi_t\rangle_\bbH\mathrm{d}\nu(t)\big|
\leq \|h\|_\bbH\int_\cX \|\varphi_t-f_n(t)\|_\bbH\mathrm{d}\nu(t)\to0\,.
\]
Hence, $\displaystyle\int_\cX\langle h, \varphi_t\rangle_\bbH\mathrm{d}\nu(t)=\lim_n\int_\cX\langle h, f_n(t)\rangle_\bbH \mathrm{d}\nu(t)$ exists and is finite. We deduce that 
\begin{equation}
    \label{eq:fubini_H}
\int_\cX\langle h, \varphi_t\rangle_\bbH\mathrm{d}\nu(t)=\langle h,\Phi(\nu)\rangle_\bbH \,,
\end{equation}
using that $\int_\cX\langle h, f_n(t)\rangle_\bbH \mathrm{d}\nu(t)=\langle h,\int_\cX f_n \mathrm{d}\nu\rangle_\bbH$.

Using \eqref{eq:fubini_H} and Cauchy-Schwarz inequality, one gets that
\begin{equation}
    \label{eq:duality_Phi}
    \langle h,\Phi(\nu)\rangle_\bbH=\int_\cX\langle h, \varphi_t\rangle_\bbH\mathrm{d}\nu(t)
\leq \sup_t\sqrt{\Kernel(t,t)} \|h\|_\bbH \|\nu\|_{\mathrm{TV}}\,,
\end{equation}
and hence, we can write 
\[
\langle h,\Phi(\nu)\rangle_\bbH
=\langle\langle h, \varphi_t\rangle_\bbH,\nu\rangle_{\measet^\star,\measet}
\]
where $\measet^\star$ is the topological dual of $\measet$. It shows that the dual $\Phi^\star$ is given by $
\Phi^\star(h)(t)=\langle h, \varphi_t\rangle_\bbH$. As a function of $t$, it is clear that it is continuous by \eqref{ass:kernel_continuity} and that 
$\|\Phi^\star(h)\|_\infty\leq\sup_t\sqrt{\Kernel(t,t)} \|h\|_\bbH<\infty $, showing that it belongs to the space of bounded continuous functions. 
\end{proof}

\subsection{Proof of Theorem~\ref{theo:mu_star}}
\label{app:thm_existence}
Let $(\nu_n)$ be a minimizing sequence of measures of Program \eqref{eq:general_min}. Up to an extraction we can consider that $L(\Phi(\nu_n))+\lambda\|\nu_n\|_{\mathrm{TV}}\leq 1+\inf_\nu \{L(\Phi(\nu))+\lambda\|\nu\|_{\mathrm{TV}}\}$. In particular, it holds that 
\[
\lambda\|\nu_n\|_{\mathrm{TV}}\leq 1+\inf_\nu \{L(\Phi(\nu))+\lambda\|\nu\|_{\mathrm{TV}}\}\,.
\]
Up to an extraction, by Banach-Alaoglu theorem, we can consider that the sequence $(\nu_n)$ converges for the weak-$\star$ topology. We denote by $\mu^\star\in\measet$ its limit. Using \cite[Proposition 3.13(iii)]{brezis2011functional}, the $\mathrm{TV}$-norm is l.s.c. for the weak-$\star$ topology, and we get that
\[
\lim\inf_n \|\nu_n\|_{\mathrm{TV}}\geq \|\mu^\star\|_{\mathrm{TV}}\,.
\]
Using Lemma~\ref{lem:KME}, it holds that for any $h\in\bbH$ and for any convergent sequence $\nu_n\to\mu^\star$ for the weak-$\star$ topology, 
\[
    \langle h,\Phi(\nu_n)\rangle_\bbH
    =\langle\Phi^\star(h),\nu_n\rangle_{\mathcal C(\cX),\measet}
    \to \langle\Phi^\star(h),\mu^\star\rangle_{\mathcal C(\cX),\measet}\,,
\]
proving that $\Phi$ is continuous from $\measet$ weak-$\star$ to $\bbH$ weak (see Remark~\ref{rem:weak_weak}). Since $L$ is l.s.c for the weak topology of $\bbH$ \citep[Corollary 3.9]{brezis2011functional}, we get that 
\[
\lim\inf_n L(\Phi(\nu_n))\geq L(\Phi(\mu^\star))\,.
\]
Combining the aforementioned limits, we deduce that 
\[
\eqref{eq:general_min}=
\lim\inf_n \big\{L(\Phi(\nu_n))+\lambda\|\nu_n\|_{\mathrm{TV}}\big\}\geq L(\Phi(\mu^\star))+\lambda\|\mu^\star\|_{\mathrm{TV}}\geq \eqref{eq:general_min}\,,
\]
hence equality. The uniqueness of $\Phi(\mu^\star)$ follows by strict convexity.

\begin{remark}
   In this paper, we assume that $\cX$ is compact. Some of our bounds depend on the size of $\cX$ and do not hold for non-compact spaces. But, the existence of $\mu^\star$ can be proven in the non-compact case. 
   
   The subtlety is in \eqref{eq:dual_Phi}. To get the proof of Theorem~\ref{theo:mu_star} work when $\cX$ is a Polish space (not necessarily compact), one needs that $\mathrm{Im}(\Phi^\star)\in \big(\mathcal{C}_0(\cX),\|\cdot\|_\infty\big)$, the space of continuous functions vanishing at infinity. We already know that $\mathrm{Im}(\Phi^\star)\in \big(\mathcal{C}(\cX),\|\cdot\|_\infty\big)$ by Condition~\eqref{ass:kernel_continuity}. We have the following result.
\begin{theorem}
    Let $\bbH$ be Hilbert space and let $\cX$ be Polish space. Assume that 
   \begin{itemize}
       \item Assumption \eqref{ass:kernel_continuity} holds;
       \item the RKHS~$\mathcal F$ (defined by $\Kernel$) is contained in $\mathcal{C}_0(\cX)$;
       \item and $\sup_t\sqrt{\Kernel(t,t)}< \infty$;
   \end{itemize}   
   then the there exists a measure $\mu^\star\in\measet$ such that 
        \begin{equation}
        \notag
            J(\mu^\star) = \min_{\mu \in \measet } J(\mu)\,.
        \end{equation}
    Furthermore, the vector $\Phi(\mu^\star)\in\bbH$ is unique.
\end{theorem}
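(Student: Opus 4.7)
The strategy is to mirror the proof of Theorem~\ref{theo:mu_star}, but replacing the dual pairing $(\mathcal{C}(\cX), \measet)$ valid for compact $\cX$ by the pairing $(\mathcal{C}_0(\cX), \measet)$, which is the correct duality in the Polish setting by the Riesz–Markov–Kakutani representation theorem. The three assumptions play distinct roles: \eqref{ass:kernel_continuity} gives continuity of $t\mapsto\varphi_t$; the bound $\sup_t\sqrt{\Kernel(t,t)}<\infty$ makes $\Phi$ a bounded linear map from $(\measet,\|\cdot\|_{\mathrm{TV}})$ into $\bbH$; and the inclusion $\mathcal F\subseteq\mathcal{C}_0(\cX)$ is what promotes weak-$\star$ convergence in $\measet$ to weak convergence in $\bbH$ through $\Phi$.

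First I would extend Lemmas~\ref{lem:KME} and~\ref{lem:non_separable} to the Polish case. Continuity of $\Kernel$ together with separability of $\cX$ yields separability of the RKHS $\mathcal F$, hence of the closed span $\bbH_{\mathcal F}\subseteq\bbH$, so the Bochner integrability argument applies verbatim to give $\Phi(\nu)=\int_\cX\varphi_t\,\mathrm d\nu(t)$ with $\|\Phi(\nu)\|_\bbH\leq\sup_t\sqrt{\Kernel(t,t)}\,\|\nu\|_{\mathrm{TV}}$. The dual operator is identified by $\Phi^\star(h)(t)=\langle h,\varphi_t\rangle_\bbH$. Decomposing $h=h_1+h_2$ with $h_1\in\bbH_{\mathcal F}$ and $h_2\perp\bbH_{\mathcal F}$, one has $\Phi^\star(h)=\Phi^\star(h_1)$, which is an element of $\mathcal F$ by the isometry of Lemma~\ref{lem:non_separable}. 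By the second assumption $\mathcal F\subseteq\mathcal{C}_0(\cX)$, so $\Phi^\star(\bbH)\subseteq\mathcal{C}_0(\cX)$ --- this is the crucial step and where the non-compact case genuinely differs from Theorem~\ref{theo:mu_star}.

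With these tools, the conclusion follows the template of Theorem~\ref{theo:mu_star}. Extract a minimizing sequence $(\nu_n)$; since $J(\nu_n)\to\inf J<\infty$ and $\lambda>0$, the sequence of TV norms is uniformly bounded. Applying Banach--Alaoglu to $\measet=\mathcal{C}_0(\cX)^\star$ yields a weak-$\star$ convergent subsequence $\nu_n\rightharpoonup^\star\mu^\star\in\measet$. The total variation norm, being the supremum of the weak-$\star$-continuous linear forms $\nu\mapsto\langle f,\nu\rangle$ over $f$ in the unit ball of $\mathcal{C}_0(\cX)$, is weak-$\star$ l.s.c. For any $h\in\bbH$, since $\Phi^\star(h)\in\mathcal{C}_0(\cX)$, one has $\langle h,\Phi(\nu_n)\rangle_\bbH=\langle\Phi^\star(h),\nu_n\rangle\to\langle\Phi^\star(h),\mu^\star\rangle=\langle h,\Phi(\mu^\star)\rangle_\bbH$, i.e.\ $\Phi(\nu_n)\rightharpoonup\Phi(\mu^\star)$ weakly. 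Since $L(\vh):=(1/2)\|\vy-\vh\|_\bbH^2$ is convex and norm-continuous, it is weakly l.s.c., giving $J(\mu^\star)\leq\liminf_n J(\nu_n)=\inf J$. Uniqueness of $\Phi(\mu^\star)$ follows by strict convexity of $L$: if $\mu_1,\mu_2$ were two minimizers with $\Phi(\mu_1)\neq\Phi(\mu_2)$, the midpoint would satisfy $J((\mu_1+\mu_2)/2)<\inf J$, a contradiction.

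The main obstacle --- and the only place compactness of $\cX$ is genuinely used in Theorem~\ref{theo:mu_star} --- is the range of $\Phi^\star$. Without the assumption $\mathcal F\subseteq\mathcal{C}_0(\cX)$, one is forced to work with the weak-$\star$ topology on $\mathcal{C}_b(\cX)^\star$, whose unit ball contains finitely additive measures outside $\measet$; the limit of the minimizing sequence would then no longer be guaranteed to be a Radon measure. The second assumption is tailored precisely to restore weak-$\star$--to--weak continuity of $\Phi$ and, along with the uniform kernel bound ensuring boundedness of $\Phi$, is essentially necessary for the variational argument to close.
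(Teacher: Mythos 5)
Your proposal is correct and follows exactly the route the paper intends: the paper states this theorem in a remark without writing out a separate proof, noting only that the argument of Theorem~\ref{theo:mu_star} goes through once $\mathrm{Im}(\Phi^\star)\subseteq\mathcal{C}_0(\cX)$, and your verification of that inclusion via the orthogonal decomposition onto $\bbH_{\mathcal F}$ and the isometry with $\mathcal F$, followed by Banach--Alaoglu on $\mathcal{C}_0(\cX)^\star$ and the two lower semicontinuity arguments, is precisely the intended completion. The only (minor) point worth flagging is that identifying $\mathcal{C}_0(\cX)^\star$ with $\measet$ via Riesz--Markov--Kakutani requires local compactness of $\cX$ rather than merely that it be Polish, but this gap is present in the paper's own statement and is harmless in the setting $\cX\subseteq\bbR^d$.
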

\end{remark}

\begin{remark}
    \label{rem:proof_mu_star_+}
    The same argument can be used to prove that Program~\eqref{eq:general_min} restricted to $\measet_+$ admits solutions. Indeed, take $(\nu_n)$ a sequence of nonnegative measures such that the objective converges towards the infimum. We can use the above proof to show the existence of $\mus$. The only point left to prove is that the measure $\mus$ is nonnegative, which is straight forward using weak-$star$ convergence and Riesz representation theorem \citep[Chapter 2]{walter1974real} of nonnegative linear functional defined by nonnegative continuous functions with compact support $($which are included in $\cC_0(\cX))$.
\end{remark}

\begin{remark}
    A similar result can be found in \cite[Proposition 3.1]{chizat2022sparse} using Prokorov's theorem.
\end{remark}

\section{Gradients of the objective \label{sec:gradients}}

\subsection{In the space of (nonnegative) measures}

We first consider the variation of $J$ in $\measet_+$ in terms of its Fréchet differential. 
\begin{proposition}
\label{prop:jnuprime}
    If $\nu+\sigma\in\measet_+$ and $\nu\in\measet_+$ then 
    \begin{equation}
        \label{eq:diff_J_nonnegative_app}
        J(\nu+\sigma)-J(\nu) = \int_\cX J'_\nu\mathrm d\sigma + \frac{1}{2}\|\Phi(\sigma)\|_\bbH^2\,,
    \end{equation}
    where $J'_\nu:=\Phi^\star(\Phi(\nu)-\vy)+\lambda$ and $\Phi^\star:\,(\bbH,\|\cdot\|_\bbH)\to(\cC(\cX),\|\cdot\|_\infty)$ is the dual of $\Phi$.
\end{proposition}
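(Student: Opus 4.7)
The plan is to expand $J(\nu+\sigma)$ directly, exploiting the linearity of the kernel measure embedding $\Phi$ and the simplification that the total variation norm enjoys on $\measet_+$, and then to read off the Fréchet gradient by invoking the duality identity supplied by Lemma~\ref{lem:KME}.

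First, I would decompose the quadratic data-fit term. Since $\Phi$ is linear and bounded (Lemma~\ref{lem:KME}), $\Phi(\nu+\sigma)=\Phi(\nu)+\Phi(\sigma)$, so expanding the squared norm in $\bbH$ gives
\[
\tfrac{1}{2}\|\vy-\Phi(\nu+\sigma)\|_\bbH^2 = \tfrac{1}{2}\|\vy-\Phi(\nu)\|_\bbH^2 + \langle \Phi(\nu)-\vy,\Phi(\sigma)\rangle_\bbH + \tfrac{1}{2}\|\Phi(\sigma)\|_\bbH^2.
\]
Next, I would rewrite the cross term via the adjoint identity of Lemma~\ref{lem:KME}, applied with $h=\Phi(\nu)-\vy\in\bbH$:
\[
\langle \Phi(\nu)-\vy, \Phi(\sigma)\rangle_\bbH = \langle \Phi^\star(\Phi(\nu)-\vy),\sigma\rangle_{\cC(\cX),\measet} = \int_\cX \Phi^\star(\Phi(\nu)-\vy)\,\mathrm d\sigma.
\]

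Second, I would handle the total variation term. For any $\mu\in\measet_+$ one has $\|\mu\|_{\mathrm{TV}}=\mu(\cX)$, so under the assumption that both $\nu$ and $\nu+\sigma$ lie in $\measet_+$,
\[
\lambda\|\nu+\sigma\|_{\mathrm{TV}}-\lambda\|\nu\|_{\mathrm{TV}} = \lambda\bigl[(\nu+\sigma)(\cX)-\nu(\cX)\bigr] = \lambda\,\sigma(\cX) = \int_\cX \lambda\,\mathrm d\sigma.
\]
Adding the two contributions and recognizing $J'_\nu=\Phi^\star(\Phi(\nu)-\vy)+\lambda$ then yields the announced identity, with the remainder $\tfrac12\|\Phi(\sigma)\|_\bbH^2$.

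The only subtlety worth flagging is in the total-variation step: the increment $\sigma$ itself is a priori signed (only $\nu$ and $\nu+\sigma$ are postulated to be nonnegative), so one must resist writing $\|\sigma\|_{\mathrm{TV}}$ or an absolute value. The right-hand side $\sigma(\cX)=\int_\cX 1\,\mathrm d\sigma$ is a genuine signed scalar, and this is precisely the reason the Fréchet derivative on $\measet_+$ carries the constant~$\lambda$ (rather than a subgradient of $|\cdot|$ as would arise on the full space $\measet$), while the error term is exactly quadratic in $\Phi(\sigma)$ with no higher-order correction.
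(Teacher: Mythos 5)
Your proposal is correct and follows essentially the same route as the paper: expand the quadratic term using linearity of $\Phi$, convert the cross term via the adjoint identity of Lemma~\ref{lem:KME}, and reduce the total-variation increment to $\lambda\,\sigma(\cX)=\int_\cX\lambda\,\mathrm d\sigma$ using nonnegativity. The only cosmetic difference is that the paper routes the TV step through a subgradient $\mathrm{Sign}(\nu)$ and a Bregman remainder $\cD_\nu(\sigma)$ (so as to also record the formula on all of $\measet$) before specializing to $\mathrm{Sign}(\nu)=1$ and $\cD_\nu(\sigma)=0$ on $\measet_+$, whereas you compute that step directly — your flagged subtlety about $\sigma$ being signed is exactly the right point.
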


\begin{proof}[Proof of \ref{prop:jnuprime}]
The proof follows from the expansion of $J(\nu+\sigma)$:
\begin{align*}
J(\nu+\sigma)&=\frac{1}{2} \big\| \vy-\Phi(\nu+\sigma)\big\|_\bbH^2 +  \lambda \|\mu+\sigma\|_{\mathrm{TV}} \\
& = \frac{1}{2} \big\| \vy-\Phi(\nu) - \Phi(\sigma)\big\|_\bbH^2 +  \lambda \|\nu+\sigma\|_{\mathrm{TV}}  \\
& = \frac{1}{2} \big\| \vy-\Phi (\nu) \big\|_\bbH^2 - \langle \vy-\Phi (\nu) , \Phi (\sigma) \rangle_\bbH+\frac{1}{2} \big\| \Phi (\sigma)\big\|_\bbH^2 
+  \lambda \|\nu+\sigma\|_{\mathrm{TV}}  \\
& = J(\nu) - \langle \Phi^\star(\vy-\Phi(\nu)) , \sigma \rangle_\bbH+\frac{1}{2} \big\| \Phi(\sigma)\big\|_\bbH^2 + \lambda \left[ \|\nu+\sigma\|_{\mathrm{TV}}  - \|\nu\|_{\mathrm{TV}} \right].
\end{align*}
Using $\mathrm{Sign}(\nu)$ as a subgradient of the TV-norm at point~$\nu$ ($\nu$-almost everywhere equal to the sign of $\nu$ and with infinity norm less than one), we then observe that:
\[
\|\nu+\sigma\|_{\mathrm{TV}}  - \|\nu\|_{\mathrm{TV}} = \langle \mathrm{Sign}(\nu),\sigma \rangle_{\measet^\star,\measet} + \cD_\nu(\sigma),
\]
where  $\cD_\nu(\sigma)$ is the second order Bregman divergence of the TV-norm between $\nu$ and $\nu+\sigma$ using the subgradient $\mathrm{Sign}(\nu)$, given by:
\[
\cD_\nu(\sigma):=\|\nu+\sigma\|_{\mathrm{TV}}  - \|\nu\|_{\mathrm{TV}}-\langle \mathrm{Sign}(\nu),\sigma \rangle_{\measet^\star,\measet}\,,
\]
with $\measet^\star\subseteq (L^\infty(\cX),\|\cdot\|_\infty)$ the topological dual of $\measet$.
Gathering all the pieces, we obtain that:
\begin{equation}
    \label{eq:diff_J}
    J(\nu+\sigma)-J(\nu) = \langle J'_\nu,\sigma \rangle_{\measet^\star,\measet}
    +q(\sigma)\,, 
\end{equation}
where $J'_{\nu}$ is given in the statement of Proposition \ref{prop:jnuprime} and $q$ is a second order term given by:
\[
    q(\sigma):=\frac{1}{2}\|\Phi(\sigma)\|_\bbH^2+\lambda\cD_\nu(\sigma).
\]

Finally, we remark that when $\nu$ is nonnegative, one possible choice for the TV subgradient is $\mathrm{Sign}(\nu)=1$. In this case, the previous decomposition may be simplified as:
\[
 J'_{\nu} =\Phi^\star(\Phi(\nu)-\vy)+\lambda\]
and $\cD_\nu(\sigma)=0$ when  $\nu+\sigma\in\measet_+$ and $\nu\in\measet_+$.
\end{proof}

\begin{remark}
    The above proof shows that for any $\mu,\nu\in\measet$,
    \begin{equation}
        \notag
        J(\nu+\sigma)-J(\nu) = \int_\cX J'_\nu\mathrm d\sigma + \frac{1}{2}\|\Phi(\sigma)\|_\bbH^2+\lambda\cD_\nu(\sigma)\,,
    \end{equation}
    where $\cD_\nu(\sigma):=\|\nu+\sigma\|_{\mathrm{TV}}  - \|\nu\|_{\mathrm{TV}}-\langle \mathrm{Sign}(\nu),\sigma \rangle_{\measet^\star,\measet}$ and $\measet^\star\subseteq (L^\infty(\cX),\|\cdot\|_\infty)$ the topological dual of $\measet$.
\end{remark}

Besides the expression of the Fréchet differential of $J$ on the space  $\measet_+$, it is possible to explicit the value of $J'_{\nu}$ at any point $t \in \cX$. Using Lemma \ref{lem:KME}, it holds, 
\begin{equation}
J'_{\nu}(t)=\big\langle \varphi_t,\Phi(\nu)-\vy\big\rangle_\bbH+\lambda, \quad \forall t\in \cX.
\label{eq:jnu1}
\end{equation}
Note that $J'_\nu$ depends on $\nu$ through $\Phi(\nu)$. Recall that $\Phi(\mus)$ is constant across all solutions $\mus$ of Program~\eqref{def:Blasso+}, see Theorem~\ref{theo:mu_star}. Hence, the function
\[
J'_{\star}(x):=\big\langle \varphi_x,\Phi(\mus)-\vy\big\rangle_\bbH+\lambda\,,\quad
x\in\cX\,,
\]
is well defined and does not depend on the choice of the solution $\mus$ (it is the same function across all possible choice of $\mus$ solution to \eqref{def:Blasso+}). The next proposition gives the first order condition of Program~\eqref{def:Blasso+}.

\begin{proposition}
    \label{prop:first_order}
    It holds that $J'_{\star} \ge 0$ and, for any solution $\mu^\star$ to Program \eqref{def:Blasso+},  
    \begin{equation}
    \label{eq:support_condition}
        \mathrm{Supp}(\mus)\subseteq\big\{x\in\cX\,:\,J'_{\star}(x)=0\big\}\,.
    \end{equation}
    Conversely, if a measure $\nu\in\measet_+$ is such that $J'_\nu\geq 0$ and it satisfies the condition 
        \begin{equation}
    \notag
        \mathrm{Supp}(\nu)\subseteq\big\{x\in\cX\,:\,J'_{\nu}(x)=0\big\}\,,
    \end{equation}
    then~$\nu$ is a solution to Program~\eqref{def:Blasso+} and $J'_\star=J'_\nu$.
\end{proposition}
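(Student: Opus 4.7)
The main tool for both directions is the Fréchet differential expansion from Proposition~\ref{prop:jnuprime}: whenever $\nu, \nu+\sigma \in \measet_+$, we have
\[
J(\nu+\sigma) - J(\nu) = \int_\cX J'_\nu \, \mathrm{d}\sigma + \frac{1}{2}\|\Phi(\sigma)\|^2_\bbH.
\]
This exact (not asymptotic) identity is the workhorse: the right-hand side has a first-order linear term in $\sigma$ plus a nonnegative quadratic term, which makes the analysis essentially free of remainder estimates.

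For the forward direction, assume $\mu^\star$ solves \eqref{def:Blasso+}. To get $J'_\star \geq 0$ pointwise, I would fix an arbitrary $x \in \cX$ and apply the identity with $\nu = \mu^\star$ and $\sigma = \varepsilon \delta_x$ for $\varepsilon>0$ small; since $\mu^\star + \varepsilon\delta_x \in \measet_+$, optimality gives $\varepsilon J'_\star(x) + \tfrac{\varepsilon^2}{2}\|\varphi_x\|^2_\bbH \geq 0$, and dividing by $\varepsilon$ then letting $\varepsilon \to 0$ yields $J'_\star(x) \geq 0$. For the support condition, take instead $\sigma = -\varepsilon \mu^\star$ with $\varepsilon \in (0,1]$, which keeps $\mu^\star + \sigma = (1-\varepsilon)\mu^\star \in \measet_+$; optimality then gives $-\varepsilon \int J'_\star \,\mathrm{d}\mu^\star + O(\varepsilon^2) \geq 0$, so $\int J'_\star \,\mathrm{d}\mu^\star \leq 0$. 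Combined with $J'_\star \geq 0$ and $\mu^\star \geq 0$, this forces $\int J'_\star \,\mathrm{d}\mu^\star = 0$, and continuity of $J'_\star$ (which follows from the continuity of $\varphi_\cdot$ via \eqref{ass:kernel_continuity}) upgrades this to $J'_\star = 0$ on $\mathrm{Supp}(\mu^\star)$.

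For the converse, assume $\nu \in \measet_+$ satisfies $J'_\nu \geq 0$ everywhere and vanishes on $\mathrm{Supp}(\nu)$. For any competitor $\mu \in \measet_+$, set $\sigma = \mu - \nu$, so that $\nu + \sigma = \mu \in \measet_+$. Applying Proposition~\ref{prop:jnuprime} and splitting the linear term as $\int J'_\nu \, \mathrm{d}\mu - \int J'_\nu \, \mathrm{d}\nu$, the first integral is nonnegative (nonnegative integrand against a nonnegative measure), the second vanishes (integrand is zero on the support of $\nu$), and the quadratic term is nonnegative, giving $J(\mu) \geq J(\nu)$. Thus $\nu$ is a solution. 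The equality $J'_\star = J'_\nu$ then follows immediately from the definition $J'_\mu(x) = \langle \varphi_x, \Phi(\mu) - \vy\rangle_\bbH + \lambda$ together with the uniqueness of $\Phi(\mu^\star)$ across solutions established in Theorem~\ref{theo:mu_star}.

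No step here is especially delicate; the only small care needed is the continuity argument used to pass from $\int J'_\star \,\mathrm{d}\mu^\star = 0$ to a pointwise statement on $\mathrm{Supp}(\mu^\star)$, and the justification that perturbations of the form $\mu^\star + \varepsilon\delta_x$ and $(1-\varepsilon)\mu^\star$ remain in $\measet_+$, which is immediate.
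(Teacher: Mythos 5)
Your proof is correct, and two of its three pieces (the pointwise nonnegativity of $J'_{\star}$ via the perturbation $\mus+\varepsilon\delta_x$, and the converse via the exact expansion $J(\mu)-J(\nu)=\int J'_\nu\,\mathrm d(\mu-\nu)+\tfrac12\|\Phi(\mu-\nu)\|_\bbH^2$ with the split $\int J'_\nu\,\mathrm d\mu\geq 0$, $\int J'_\nu\,\mathrm d\nu=0$) coincide with the paper's argument. Where you genuinely diverge is the support condition. The paper argues by contradiction with a \emph{localized} perturbation: it picks a point of the support where $J'_{\star}>0$, uses continuity to find a neighbourhood $U_x$ on which $J'_{\star}>\sqrt{\varepsilon}$ and $\mus(U_x)>\sqrt{\varepsilon}$, invokes the Jordan decomposition, and removes the mass of $\mus$ restricted to $U_x$ to contradict optimality. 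You instead use the \emph{global} scaling perturbation $\sigma=-\varepsilon\mus$, which yields $\int J'_{\star}\,\mathrm d\mus\leq 0$, hence $\int J'_{\star}\,\mathrm d\mus=0$ by the already-proved sign condition, and then localize by continuity (a point of the support with $J'_{\star}>0$ would force the integral to be positive). This is the standard complementary-slackness presentation; it buys you a lighter argument — no Jordan decomposition, no delicate simultaneous choice of $\varepsilon$ and $U_x$, and the perturbation $(1-\varepsilon)\mus$ trivially stays in $\measet_+$ so Proposition~\ref{prop:jnuprime} applies in its clean nonnegative form — at the mild cost of needing $J'_{\star}\geq 0$ as an input to that step, which you have. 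Both routes are sound; yours is arguably the cleaner one for the nonnegative-measure program $(\mathcal B_+)$, while the paper's localized construction is closer to what one would need for the signed program, where the subgradient term $\cD_\nu(\sigma)$ must be tracked.
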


\begin{proof}
Let $x\in\cX$ and let $\varepsilon>0$ be defined later. By Proposition~\ref{prop:jnuprime}, one has 
\[
J'_\star(x)=\frac{J(\mus+\varepsilon\delta_x)-J(\mus)}{\varepsilon}-\frac{\varepsilon}{2}\|\varphi_x\|^2_\bbH\,,
\]
hence 
\[
J'_\star(x)\geq\liminf_{\varepsilon\downarrow 0}\Bigg\{\frac{J(\mus+\varepsilon\delta_x)-J(\mus)}{\varepsilon}\Bigg\}\geq 0\,,
\]
since $J(\mus+\varepsilon\delta_x)-J(\mus)\geq 0$. 

Assume now that there exists a point $x\in\cX$ such that $x\in\mathrm{Supp}(\mus)$ and $J'_\star(x)>0$. Since $J'_\star$ is continuous and $J'_\star(x)>0$ there exists $\varepsilon>0$ and a open neighborhood $U_x$ of $x$ such that 
\[
\forall t\in U_x\,,\quad J'_x(t)>\sqrt\varepsilon
\]
By Jordan decomposition theorem, there exists two nonnegative measures $\mus_+$ and $\mus_{-}$ with disjoints supports such that $\mus=\mus_+-\mus_{-}$ and $\mathrm{Supp}(\mus)=\mathrm{Supp}(\mus_+)\sqcup\mathrm{Supp}(\mus_{-})$. Without loss of generality, we assume that $x\in\mathrm{Supp}(\mus_+)$. Taking $\varepsilon>0$ and $U_x$ sufficiently smalls, one has $U_x\cap \mathrm{Supp}(\mus_{-})=\emptyset$ and
\[
\mus(U_x)>\sqrt\varepsilon\,.
\]
Let $B$ be a Borelian of $\cX$ and define $\sigma\in\measet$ by $\sigma(B):=-\mus_+(B\cap U_x)$. Remark that $\mathcal{D}_{\mus}(\sigma)=0$, this latter being straightforward when $\mus\in\measet_+$ (in this case $\mus+\sigma\in\measet_+$). By Proposition~\ref{prop:jnuprime}, one has 
\[
0\leq J(\mus+\sigma)-J(\mus)=\int_\cX J'_\star\mathrm{d}\sigma=-\int_{\cX\cap U_x}J'_\star\mathrm{d}\mus_+\,,
\]
and also
\[
\int_{\cX\cap U_x}J'_\star\mathrm{d}\mus_+\geq \varepsilon\,,
\]
which is a contradiction. The converse result is a consequence of \eqref{eq:diff_J_nonnegative_app} as
\[
J(\mu^\star)-J(\nu)=\int_\cX J'_\nu\mathrm d(\mu^\star-\nu) + \frac{1}{2}\|\Phi(\mu^\star-\nu)\|_\bbH^2
=\int_\cX J'_\nu\mathrm d\mu^\star + \frac{1}{2}\|\Phi(\mu^\star-\nu)\|_\bbH^2\geq 0
\]
and hence $\nu$ is a minimizer. Finally, $J'_\star$  does not depend on the choice of the solution $\mu^\star$ hence $J'_\star=J'_\nu$.
\end{proof}

The lemma displayed below provides some bounds on the Frechet differential of the objective function and on its stochastic estimate. 

\begin{lemma}
\label{lem:boundJprime}
There exists a positive constant $\mathcal{C}_0 = \mathcal{C}_0(y,\varphi,\lambda)$ such that 
$$ \| J_\nu'\|_\infty := \sup_{t\in\cX} |J'_\nu(t) | \leq \mathcal{C}_0 ( \|\nu\|_{\mathrm{TV}}+1) \quad \forall \nu\in \mathcal{M}(\cX)_+.$$
Moreover, provided Assumption \eqref{A1} is satisfied, we have almost surely for any $\nu\in \mathcal{M}(\cX)_+$
$$ \sup_{t\in\cX} |J'_\nu(t,Z) | \leq \mathcal{C}_1 ( \|\nu\|_{\mathrm{TV}}+1) \quad \mathrm{and} \quad \sup_{t\in\cX} |\xi_\nu(t,Z) | \leq \mathcal{C}_2 ( \|\nu\|_{\mathrm{TV}}+1),$$
for some constants $\mathcal{C}_1$ and $\mathcal{C}_2$ depending only on $y,\varphi$ and $\lambda$.
\end{lemma}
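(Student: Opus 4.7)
The three bounds are all essentially consequences of Cauchy--Schwarz together with the boundedness properties already established (for the first bound) or assumed in \eqref{A1} (for the last two). I will treat each claim in turn, isolating dependence on $\|\nu\|_{\mathrm{TV}}$.

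For the first bound, I would start from the closed-form expression \eqref{eq:jnu1},
\[
J'_\nu(\vt)=\langle \varphi_\vt,\Phi(\nu)-\vy\rangle_{\bbH}+\lambda,
\]
apply Cauchy--Schwarz, and use $\|\varphi_\vt\|_{\bbH}\leq \|\varphi\|_{\infty,\bbH}$. The key ingredient is the inequality $\|\Phi(\nu)\|_{\bbH}\leq \|\varphi\|_{\infty,\bbH}\|\nu\|_{\mathrm{TV}}$ furnished by Lemma~\ref{lem:KME}, which yields
\[
|J'_\nu(\vt)|\leq \|\varphi\|_{\infty,\bbH}^{2}\|\nu\|_{\mathrm{TV}}+\|\varphi\|_{\infty,\bbH}\|\vy\|_{\bbH}+\lambda,
\]
so that $\mathcal{C}_0:=\max\bigl(\|\varphi\|_{\infty,\bbH}^{2},\ \lambda+\|\varphi\|_{\infty,\bbH}\|\vy\|_{\bbH}\bigr)$ delivers the claim.

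For the second bound, I would plug the definition \eqref{eq:J'sto} of $\mJ'_\nu(\vt,Z)$, namely
\[
\mJ'_\nu(\vt,Z)=\|\nu\|_{\mathrm{TV}}\,\vg_{\vt,T}(U)-\vh_\vt(V)+\lambda,
\]
and invoke directly the boundedness of $\vg$ and $\vh$ guaranteed by Assumption~\eqref{A1}, giving almost surely
\[
|\mJ'_\nu(\vt,Z)|\leq \|\vg\|_\infty\|\nu\|_{\mathrm{TV}}+\|\vh\|_\infty+\lambda,
\]
which is of the required form with $\mathcal{C}_1:=\max(\|\vg\|_\infty,\ \|\vh\|_\infty+\lambda)$. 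The third bound then follows without any new idea: using $\xi_\nu(\vt,Z)=\mJ'_\nu(\vt,Z)-J'_\nu(\vt)$, which is the defining identity \eqref{def:grad_sto_J}, the triangle inequality combined with the two previous steps gives the result with $\mathcal{C}_2:=\mathcal{C}_0+\mathcal{C}_1$.

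There is no real obstacle here; the lemma is a collection of elementary a priori bounds whose only subtlety is ensuring that the multiplicative dependence in $\|\nu\|_{\mathrm{TV}}$ is linear, which is transparent from the explicit expressions of $J'_\nu$ and $\mJ'_\nu$. The only thing to be careful about is keeping the constants independent of $\nu$ and of $Z$, which is immediate since $\|\varphi\|_{\infty,\bbH}$, $\|\vy\|_\bbH$, $\|\vg\|_\infty$, $\|\vh\|_\infty$ and $\lambda$ are all finite and fixed under Assumption~\eqref{A1}.
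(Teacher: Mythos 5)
Your proposal is correct and follows essentially the same route as the paper: the first bound via Cauchy--Schwarz on \eqref{eq:jnu1} (the paper normalizes $\nu$ and bounds $\sup_{s,t}|\langle\varphi_s,\varphi_t\rangle_\bbH|$ rather than invoking $\|\Phi(\nu)\|_\bbH\le\|\varphi\|_{\infty,\bbH}\|\nu\|_{\mathrm{TV}}$ from Lemma~\ref{lem:KME}, but this is the same estimate), the second directly from the boundedness of $\vg,\vh$ in Assumption~\eqref{A1}, and the third by the triangle inequality with $\mathcal{C}_2=\mathcal{C}_0+\mathcal{C}_1$. You even recover the paper's exact constants.
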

\begin{proof}
Let $\nu\in \mathcal{M}(\cX)_+$ be fixed. We denote by $\tilde \nu$ the normalized measure $\tilde\nu=\nu/\|\nu\|_{\mathrm{TV}} \in \mathcal{M}(\cX)_+$. According to (\ref{eq:jnu1}), we have
\begin{eqnarray*}
\sup_{t\in\cX}|J'_\nu(t)| 
& \leq & \lambda + \sup_{t\in\cX}|\langle \varphi_t,y\rangle_\mathds{H}| + \|\nu\|_{\mathrm{TV}} \sup_{t\in\cX}|\langle \varphi_t, \Phi(\tilde\nu)\rangle_\mathds{H}|, \\
& \leq & \lambda + \sup_{t\in\cX}|\langle \varphi_t,y\rangle_\mathds{H}| + \|\nu\|_{\mathrm{TV}} \sup_{t,s\in\cX}|\langle \varphi_t, \varphi_s \rangle_\mathds{H}|, \\
& \leq & \lambda + \|\varphi \|_{\infty,\mathds{H}} \|y\|_\mathds{H} + \|\nu\|_{\mathrm{TV}} \|\varphi\|_{\infty,\mathds{H}}^2,\\
& \leq & \mathcal{C}_0 (\|\nu\|_{\mathrm{TV}} + 1),
\end{eqnarray*}
with 
\begin{equation}
\mathcal{C}_0 = \max\left( \lambda +\|\varphi \|_{\infty,\mathds{H}} \|y\|_\mathds{H} ; \|\varphi\|_{\infty,\mathds{H}}^2 \right).
\label{eq:C0}
\end{equation}
Concerning the second part of the lemma, we first remark that, provided Assumption \eqref{A1} is satisfied, we have for any $\nu\in \mathcal{M}(\cX)_+$
$$ \sup_{t\in\cX} |J'_\nu(t,Z) | \leq \|\nu\|_{\mathrm{TV}} \sup_{t\in\cX} |\vg_{t,T}(U)| + \sup_{t\in\cX}|\vh_t(V)| + \lambda \leq \mathcal{C}_1(\|\nu\|_{\mathrm{TV}}+1),$$
with
\begin{equation}
\mathcal{C}_1 := \max\left(\|\vg\|_\infty ; \|\vh\|_\infty+\lambda \right).
\label{eq:C1}
\end{equation}
The last results is obtained thanks to a basic triangle inequality
$$ \sup_{t\in\cX} |\xi_\nu(t,Z) | \leq   \sup_{t\in\cX} |J'_\nu(t) | +  \sup_{t\in\cX} |J'_\nu(t,Z) | \leq \mathcal{C}_2 ( \|\nu\|_{\mathrm{TV}}+1),$$
with
\begin{equation}
\mathcal{C}_2=\mathcal{C}_0 + \mathcal{C}_1 = \max\left( \lambda +\|\varphi \|_{\infty,\mathds{H}} \|y\|_\mathds{H} ; \|\varphi\|_{\infty,\mathds{H}}^2 \right) + \max\left(\|\vg\|_\infty ; \|\vh\|_\infty+\lambda \right).
\label{eq:C2}
\end{equation}
\end{proof}

\subsection{In the space of particles}
We consider any set of positions $\pos$ and their associate weights $\weights$.
In order to compute the derivatives of~$F$ w.r.t. $\weights$ and $\pos$, our starting point is Equation \eqref{def:F} and we observe that the gradient with respect to $\weights$ is easily computed:
\[
    \nabla_{\bm  \omega} F(\bm  \omega,\pos)  = \bm\lambda-\vk_{\pos}+\Kernel_\pos\weights.
\]
Nevertheless, the interpretation in terms of Fréchet derivative and of $J$ allows to obtain the next result.
\begin{proposition}\label{prop:grad_F}
For any $\weights$ and $\pos$, denote $\nu=\nu(\weights,\pos)$, one has:
\begin{itemize}%
\item[$(i)$] Gradient w.r.t. weights: for any $j\in \{1,\ldots,p\}$, one has $\nabla_{\omega_j} F(\bm  \omega,\pos) = J'_{\nu}(\vt_j)$
\item[$(ii)$] Gradient w.r.t. positions: for any $j\in \{1,\ldots,p\}$, one has $\nabla_{\vt_j} F(\bm  \omega,\pos) = \omega_j \nabla_{\vt_j} J'_{\nu}(\vt_j)$
\end{itemize}
\end{proposition}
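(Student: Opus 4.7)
The plan is to differentiate the explicit polynomial expression \eqref{def:F} of $F$ directly in each variable, and then match the resulting formulas against the definition of $J'_{\nu}$ given by \eqref{eq:def_J_prime}. There is essentially no measure-theoretic content to add once one sees that $F(\weights,\pos) = J(\nu(\weights,\pos)) - \tfrac12\|\vy\|_{\mathbb H}^2$ and that $\weights \mapsto F(\weights,\pos)$ is quadratic while $\pos \mapsto F(\weights,\pos)$ inherits smoothness from the feature map. I would therefore treat the two items separately and keep each computation short.

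For the weight gradient $(i)$, I would start from \eqref{def:F} and write $\partial_{\omega_j} F = \lambda - (\vk_\pos)_j + (\Kernel_\pos \weights)_j$, then expand using the definitions $(\vk_\pos)_j = \langle\vy,\varphi_{\vt_j}\rangle_{\mathbb H}$ and $(\Kernel_\pos \weights)_j = \sum_{i=1}^p \omega_i \langle \varphi_{\vt_i},\varphi_{\vt_j}\rangle_{\mathbb H}$, and finally identify the right-hand side with $J'_\nu(\vt_j)$ via \eqref{eq:def_J_prime}. As a sanity check, this can also be obtained by feeding $\sigma = \varepsilon\,\delta_{\vt_j}$ into Proposition~\ref{prop:jnuprime} and letting $\varepsilon \to 0$; the first-order Taylor term then returns $J'_\nu(\vt_j)$ and the second-order term $\tfrac{\varepsilon^2}{2}\|\varphi_{\vt_j}\|_{\mathbb H}^2$ is negligible.

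For the position gradient $(ii)$, the only slightly delicate point is handling the symmetric double sum
\[
\weights^T \Kernel_\pos \weights = \sum_{i,k=1}^p \omega_i\omega_k\,\langle \varphi_{\vt_i},\varphi_{\vt_k}\rangle_{\mathbb H}.
\]
Differentiating in $\vt_j$ and using the symmetry $\Kernel(\vt_i,\vt_k) = \Kernel(\vt_k,\vt_i)$ gives
\[
\nabla_{\vt_j}\big[\weights^T \Kernel_\pos \weights\big] = 2\,\omega_j \sum_{i=1}^p \omega_i\,\nabla_{\vt_j}\langle \varphi_{\vt_j},\varphi_{\vt_i}\rangle_{\mathbb H},
\]
so that the first factor of $\omega_j$ cleanly emerges. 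Combined with $\nabla_{\vt_j}[\langle\bm\lambda-\vk_\pos,\weights\rangle] = -\omega_j\,\nabla_{\vt_j}\langle\varphi_{\vt_j},\vy\rangle_{\mathbb H}$, I obtain
\[
\nabla_{\vt_j} F(\weights,\pos) = \omega_j\bigg[\sum_{i=1}^p \omega_i\,\nabla_{\vt_j}\langle \varphi_{\vt_j},\varphi_{\vt_i}\rangle_{\mathbb H} - \nabla_{\vt_j}\langle\varphi_{\vt_j},\vy\rangle_{\mathbb H}\bigg],
\]
and the bracket equals $\nabla_{\vt_j} J'_\nu(\vt_j)$ by differentiating \eqref{eq:def_J_prime} (the constant term $\lambda$ vanishes).

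The only genuine hypothesis needed is the existence of the pointwise gradients $\nabla_{\vt} \langle \varphi_\vt,\varphi_{\vt'}\rangle_{\mathbb H}$ and $\nabla_\vt \langle\varphi_\vt,\vy\rangle_{\mathbb H}$, i.e.\ continuous differentiability of the feature map, as discussed in the remark following Assumption~\eqref{A2}. Thus the main obstacle is purely notational (the combinatorics of the symmetric kernel sum and making sure the factor $\omega_j$ is not double-counted). Once this is dispatched, the result is immediate.
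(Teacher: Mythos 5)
Your proof is correct, but it takes a genuinely different route from the paper's. You differentiate the explicit quadratic form \eqref{def:F} coordinate-by-coordinate and match the result against the pointwise formula \eqref{eq:def_J_prime} for $J'_\nu$, whereas the paper works variationally: it feeds the perturbations $\sigma=\varepsilon\,\delta_{\vt_j}$ (for the weights) and $\omega_j[\delta_{\vt_j+\varepsilon}-\delta_{\vt_j}]$ (for the positions) into the Fr\'echet expansion of Proposition~\ref{prop:jnuprime} and reads off the partial derivatives of $F$ as directional derivatives of $J$ on $\measet_+$. The paper's argument makes explicit the conceptual point that the particle gradient \emph{is} the lifted gradient of $J$ restricted to the particle parametrization, which is the guiding idea behind CPGD; your argument is more elementary and self-contained, needing only differentiability of the feature map, and it sidesteps the mild notational abuses in the paper's proof (the pairing $\langle J'_\nu,\delta_{\vt_j}\rangle$ written as an $\bbH$ inner product, and the scalar $\varepsilon$ doubling as a displacement vector in $\delta_{\vt_j+\varepsilon}$). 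The one place where your computation carries real content is the symmetric double sum in $(ii)$: the factor $2$ from symmetry (including the diagonal term $i=j$, whose full derivative $\nabla_{\vt_j}K(\vt_j,\vt_j)=2(\nabla_1 K)(\vt_j,\vt_j)$ must be halved back by the $\tfrac12$ in front of the quadratic form) is exactly what makes the single prefactor $\omega_j$ emerge, and you handle it correctly, with the understanding that $\nabla_{\vt_j}\langle\varphi_{\vt_j},\varphi_{\vt_i}\rangle_{\bbH}$ acts on the first slot only so as to coincide with $\nabla_\vt J'_\nu(\vt)\big|_{\vt=\vt_j}$ for $\nu$ held fixed. Your "sanity check" for $(i)$ is essentially the paper's actual proof of that item.
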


\begin{proof}
The starting point is the Fréchet derivative that, if we consider $\sigma \in \measet_+$ and $\varepsilon>0$ small enough:
\[
J(\nu+\varepsilon \sigma) = J(\nu) + \varepsilon \langle J'_{\nu},\sigma \rangle_\bbH + o(\varepsilon).
\]
\underline{Proof of $(i)$:}
Considering any particle $j \in \{1,\ldots, p\}$ and $\sigma = \delta_{\vt_j}$, we then obtain that:
\[\lim_{\varepsilon \to 0} \frac{J(\nu+\varepsilon \sigma)-J(\nu)}{\varepsilon} = \langle  J'_{\nu},\delta_{\vt_j} \rangle_\bbH  = J'_{\nu}(\vt_j),\]
where the last equality comes from the reproducing kernel property. In the meantime, we observe that
\[ \lim_{\varepsilon \to 0} \frac{J(\nu+\varepsilon \sigma)-J(\nu)}{\varepsilon} = \lim_{\varepsilon \to 0} \frac{F(\weights+\varepsilon \bm 1_{j},\pos) - F(\weights,\pos)}{\varepsilon}  = \frac{ \partial F(\weights,\pos)}{\partial \omega_j}.\]
We then conclude using the Fréchet derivative of $J$ that:
\[
J'_{\nu}(\vt_j) =  \nabla_{\omega_j} F(\bm  \omega,\pos).
\]
\underline{Proof of $(ii)$:}
Using the same consideration on the positions of the particles, we then consider any pertubed set of positions $\tilde{\pos}_{\varepsilon,j} = \pos + \varepsilon \bm 1_j$ where only the coordinate $j$ of $\pos$ is modified. We then write the partial derivative of $F$:
\[
\lim_{\varepsilon \to  0} \frac{F(\weights,\tilde{\pos}_{\varepsilon,j})-F(\weights,\pos)}{\varepsilon}= \frac{\partial F(\weights,\pos)}{\partial t_j}.
\]
In the meantime, we observe that with the Fréchet derivative of $J$ that:
\begin{align*}
\lim_{\varepsilon \to  0} \frac{F(\weights,\tilde{\pos}_{\varepsilon,j})-F(\weights,\pos)}{\varepsilon} &=   \lim_{\varepsilon \to  0} \frac{J(\nu(\weights,\tilde{\pos}_{\varepsilon,j}))-J(\nu(\weights,\pos))}{\varepsilon} \\
& =
 \lim_{\varepsilon \to  0} \frac{J(\nu(\weights,\pos)+ \omega_j [\delta_{\vt_j+\varepsilon}-\delta_{\vt_j}])-J(\nu(\weights,\pos))}{\varepsilon}\\
& =  \lim_{\varepsilon \to  0}
\frac{ \langle J'_{\nu(\weights,\pos)},\omega_j \delta_{\vt_j+\varepsilon}\rangle_\bbH -  \langle J'_{\nu(\weights,\pos)},\omega_j \delta_{\vt_j}\rangle_\bbH}{\varepsilon} \\
& = \omega_j \nabla_{t_j} J'_{\nu(\weights,\pos)}(\vt_j).
\end{align*}
\end{proof}
Finally, it is possible to quantify the way $F$ is modified when we change $(\weights,\pos)$ to $(\weights',\pos')$  thanks to the next proposition.
where $\nu=\sum_{k=1}^p \omega_k \delta_{t_k}$.
\begin{proposition}
Consider two pairs $(\weights,\pos)$ and $(\weights',\pos')$ of weights/positions and denote $\nu=\nu(\weights,\pos)$ defined in Equation \eqref{def:nu}, then:
    \begin{equation}
        \label{eq:diff_F_nonnegative}
        F(\bm  \omega',\pos')-F(\bm  \omega,\pos) = \sum_{j=1}^p (\omega_j' J'_\nu(\vt_j')-\omega_j J'_\nu(\vt_j)) + \frac{1}{2}(\bm  \omega',-\bm  \omega)^\top\Kernel_{(\pos',\pos)}(\bm  \omega',-\bm  \omega)\,,
    \end{equation}
where  $\Kernel_{(\pos',\pos)}$ is a $(2p\times 2p)$ symmetric matrix with $(p\times p)$ diagonal blocks~$\langle \varphi_{t_i'},\varphi_{t_j'}\rangle_{\bbH}$ and $\langle \varphi_{t_i},\varphi_{t_j}\rangle_{\bbH}$, and $(p\times p)$ off-diagonal block $\langle \varphi_{t'_i},\varphi_{t_j}\rangle_{\bbH}$.
\end{proposition}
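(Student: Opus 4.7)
My plan is to reduce the claim to the Fréchet expansion from Proposition~\ref{prop:jnuprime} and then carry out a direct expansion of the quadratic term. Recall that by the definition of~$F$ in \eqref{def:F} we have $F(\weights,\pos) = J(\nu(\weights,\pos)) - \tfrac{1}{2}\|\vy\|_\bbH^2$, so the additive constant cancels and
\[
F(\weights',\pos') - F(\weights,\pos) = J(\nu') - J(\nu),
\]
where I write $\nu=\nu(\weights,\pos)$ and $\nu'=\nu(\weights',\pos')$. Since both $\nu$ and $\nu'$ lie in $\measet_+$, I can apply Proposition~\ref{prop:jnuprime} with the admissible perturbation $\sigma := \nu' - \nu$ (note that $\nu+\sigma = \nu' \in \measet_+$) to obtain
\[
J(\nu') - J(\nu) = \int_\cX J'_\nu\,\mathrm{d}(\nu'-\nu) + \frac{1}{2}\|\Phi(\nu'-\nu)\|_\bbH^2.
\]

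For the linear term, since $\nu'-\nu = \sum_{j=1}^p \omega'_j\delta_{\vt'_j} - \sum_{j=1}^p \omega_j\delta_{\vt_j}$, integration against $J'_\nu$ yields immediately
\[
\int_\cX J'_\nu\,\mathrm{d}(\nu'-\nu) = \sum_{j=1}^p \bigl(\omega'_j J'_\nu(\vt'_j) - \omega_j J'_\nu(\vt_j)\bigr),
\]
matching the first sum on the right-hand side of the desired identity.

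For the quadratic term, I will use linearity of $\Phi$ so that $\Phi(\nu'-\nu) = \sum_{j=1}^p \omega'_j\varphi_{\vt'_j} - \sum_{j=1}^p \omega_j\varphi_{\vt_j}$, and then expand the squared norm:
\[
\|\Phi(\nu'-\nu)\|_\bbH^2 = \sum_{i,j}\omega'_i\omega'_j\langle\varphi_{\vt'_i},\varphi_{\vt'_j}\rangle_\bbH - 2\sum_{i,j}\omega'_i\omega_j\langle\varphi_{\vt'_i},\varphi_{\vt_j}\rangle_\bbH + \sum_{i,j}\omega_i\omega_j\langle\varphi_{\vt_i},\varphi_{\vt_j}\rangle_\bbH.
\]
The final step is bookkeeping: writing the concatenated vector $(\weights',-\weights) \in \R^{2p}$ and the $(2p\times 2p)$ symmetric matrix $\Kernel_{(\pos',\pos)}$ defined by its four blocks as in the statement, I verify that the right-hand side of the display above equals $(\weights',-\weights)^\top \Kernel_{(\pos',\pos)}(\weights',-\weights)$; the cross-terms carry a factor~$2$ from the symmetry of the off-diagonal blocks, which compensates exactly the factor $-2$ coming from the sign convention $(\weights',-\weights)$. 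Dividing by two and combining with the linear term gives the claim. There is no real obstacle here beyond careful bookkeeping of signs and the block structure; the result is essentially just the second-order Taylor expansion of the quadratic-plus-linear functional $F$ written in particle form.
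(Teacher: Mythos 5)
Your proposal is correct and follows essentially the same route as the paper: both apply Proposition~\ref{prop:jnuprime} with $\sigma=\nu'-\nu$ (noting $\nu+\sigma=\nu'\in\measet_+$), read off the linear term as $\sum_j(\omega'_jJ'_\nu(\vt'_j)-\omega_jJ'_\nu(\vt_j))$, and identify $\|\Phi(\nu')-\Phi(\nu)\|_\bbH^2$ with the block quadratic form $(\weights',-\weights)^\top\Kernel_{(\pos',\pos)}(\weights',-\weights)$. You merely spell out the sign and factor-of-two bookkeeping that the paper leaves implicit, which is fine.
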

\begin{proof}
We denote $\nu'=\nu(\weights',\pos')$ and apply Equation \eqref{eq:diff_J_nonnegative_app} with $\nu'=\nu+\sigma$ with $\sigma=\nu'-\nu$, we obtain that:
\[
F(\weights',\pos') - F(\weights,\pos)=J(\nu+\sigma)-J(\nu)=\int_{\cX} J'_{\nu} \text{d}\sigma + \frac{1}{2} \| \Phi(\sigma)\|^2_\bbH  =
\int_{\cX} J'_{\nu} \text{d}\sigma + \frac{1}{2} \| \Phi(\nu')-\Phi(\nu)\|^2_\bbH 
\] Note that the last term rewrites 
\[
    (\bm  \omega',-\bm  \omega)^\top\Kernel_{(\pos',\pos)}(\bm  \omega',-\bm  \omega)=\|\Phi(\nu')-\Phi(\nu)\|_\bbH^2\,,
\]
which is the squared Maximum Mean Discrepancy (MMD) between $\nu$ and $\nu'$ for the kernel $K$.
\end{proof}

\section{Technical results for Theorem \ref{theo:global_sto} \label{app:tech_tools}}

\begin{proposition}
\label{prop:bornes_techniques_J}
 Consider $(\vt,\tilde{\vt}) \in \cX^2$, the following technical inequalities hold:
\begin{enumerate}
    \item[$i)$] $ |\nabla J'_{\nu}(\vt)- \nabla J'_{\nu}(\tilde{\vt})| \leq \left[ Lip(\nabla \varphi) \|\nu\|_{\mathrm{TV}} + Lip(\bm \nabla y)\right] |\vt-\tilde{\vt}|$,
    \item[$ii)$] $\| \nabla J_\nu'(t) \|_\mathcal{X} \leq (\|\nu\|_{\mathrm{TV}} \|\varphi\|_{\infty,\mathbb{H}}+\|y\|_\mathbb{H}) \|\varphi'\|_\infty$ with $\|\varphi'\|_{\infty} := \sup_t\sup_{\psi:\|\psi\|_\mathbb{H}\leq 1} \| \nabla_{\bm t} \langle \varphi_{\bm t},\psi\rangle\|_\mathcal{X}$,
    \item[$iii)$] $\| D_\nu (t,Z)\|_\mathcal{X} \leq \|\nu\|_{\mathrm{TV}} \|\bm g'\|_{\infty,\mathbb{H}}+ \|\bm h'\|_{\mathbb{H}},$
    \item[$iv)$] $\| \nabla^2 J_\nu'(\bm t) \|_{op} \leq (\|\nu\|_{\mathrm{TV}} \|\varphi\|_{\infty,\mathbb{H}}+\|y\|_\mathbb{H}) \| \nabla^2\varphi\|_{\infty,op}$ with $\|\nabla^2\varphi\|_{\infty,op} := \sup_t\sup_{\psi:\|\psi\|_\mathbb{H}\leq 1} \| \nabla^2_{\bm t} \langle \varphi_{\bm t},\psi\rangle\|_{op}$,
\end{enumerate}
where Assumption \eqref{A2} is required for inequality $iii)$.
\end{proposition}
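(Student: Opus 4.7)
The common starting point for all four inequalities is the explicit expression of $\nabla J'_\nu$ obtained by differentiating \eqref{eq:def_J_prime}. Using the linearity of the inner product and pushing the gradient through, one writes
\[
\nabla_\vt J'_\nu(\vt) \;=\; \bigl\langle \nabla_\vt \varphi_\vt,\; \Phi(\nu)-\vy\bigr\rangle_{\mathbb H}\,,
\]
which shows that $\nabla J'_\nu(\vt)$ is a vector in $\mathcal X$ whose $k$-th component is the inner product between $\partial_{\vt_k}\varphi_\vt\in\mathbb H$ and $\Phi(\nu)-\vy\in\mathbb H$. Similarly for $\mD_\nu(\vt,Z)$, the definition \eqref{eq:def_D} together with Assumption~\eqref{A2} directly exposes $\|\nu\|_{\mathrm{TV}}\nabla_\vt\bm g_{\vt,T}(U)$ and $\nabla_\vt\bm h_\vt(V)$.

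For $ii)$ and $iv)$, I apply a coordinatewise Cauchy--Schwarz. For $ii)$, I get componentwise
$\bigl|\bigl\langle \partial_{\vt_k}\varphi_\vt, \Phi(\nu)-\vy\bigr\rangle_{\mathbb H}\bigr| \leq \|\varphi'\|_\infty \|\Phi(\nu)-\vy\|_{\mathbb H}$, and control of $\|\Phi(\nu)-\vy\|_{\mathbb H}$ via Lemma~\ref{lem:KME} (triangle inequality and boundedness of $\Phi$) gives the factor $\|\nu\|_{\mathrm{TV}}\|\varphi\|_{\infty,\mathbb H}+\|\vy\|_{\mathbb H}$. Iteration of the same idea with $\nabla^2_\vt\varphi_\vt$ interpreted as a Hessian operator yields $iv)$: one takes operator-norm on the test direction $\psi\in\mathbb H$ inside $\|\psi\|_{\mathbb H}\leq 1$ in the definition of $\|\nabla^2\varphi\|_{\infty,op}$, and again bounds $\Phi(\nu)-\vy$ in $\mathbb H$.

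Statement $iii)$ is simply the triangle inequality applied to \eqref{eq:def_D}: since $\|\mD_\nu(\vt,Z)\|_{\mathcal X}\leq \|\nu\|_{\mathrm{TV}}\|\nabla_\vt\bm g_{\vt,T}(U)\|_{\mathcal X}+\|\nabla_\vt\bm h_\vt(V)\|_{\mathcal X}$, the uniform bounds $\|\bm g'\|_{\infty,\mathbb H}$ and $\|\bm h'\|_{\mathbb H}$ guaranteed by Assumption~\eqref{A2} conclude it almost surely in $Z=(T,U,V)$.

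For $i)$, the only non-trivial step, I write
\[
\nabla J'_\nu(\vt)-\nabla J'_\nu(\tilde\vt)
=\sum_{j=1}^p \omega_j\bigl[\nabla_\vt\langle\varphi_\vt,\varphi_{\vt_j}\rangle_{\mathbb H}-\nabla_{\tilde\vt}\langle\varphi_{\tilde\vt},\varphi_{\vt_j}\rangle_{\mathbb H}\bigr]-\bigl[\nabla_\vt\langle\varphi_\vt,\vy\rangle_{\mathbb H}-\nabla_{\tilde\vt}\langle\varphi_{\tilde\vt},\vy\rangle_{\mathbb H}\bigr]\,,
\]
and then use the assumed Lipschitz regularity of $\vt\mapsto\nabla_\vt\varphi_\vt$ (through $\mathrm{Lip}(\nabla\varphi)$) inside the first sum, which yields $\sum_j\omega_j\,\mathrm{Lip}(\nabla\varphi)|\vt-\tilde\vt|=\mathrm{Lip}(\nabla\varphi)\|\nu\|_{\mathrm{TV}}|\vt-\tilde\vt|$, together with $\mathrm{Lip}(\nabla\vy)$ for the second term. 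The only subtlety worth double-checking is that the norm conventions used in defining $\mathrm{Lip}(\nabla\varphi)$ absorb the $\|\varphi\|_{\infty,\mathbb H}$ factor that would otherwise appear in front of $\|\nu\|_{\mathrm{TV}}$; this is consistent with the statement and with the use made of this bound in the proof of Theorem~\ref{theo:local_sto}. No step is a real obstacle: the whole proposition is a ledger of Cauchy--Schwarz and triangle inequalities, organized by the four shapes of the quantity being bounded.
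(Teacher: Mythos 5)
Your proposal is correct and follows essentially the same route as the paper: expand $\nabla J'_\nu$ (resp.\ $\mD_\nu$, $\nabla^2 J'_\nu$) from \eqref{eq:def_J_prime} and \eqref{eq:def_D}, then apply the triangle inequality and Cauchy--Schwarz against the unit ball of $\mathbb H$, with the boundedness of $\nabla\bm g$ and $\nabla\bm h$ from Assumption~\eqref{A2} handling $iii)$. The only cosmetic difference is that you bound $\|\Phi(\nu)-\vy\|_{\mathbb H}$ as a whole where the paper splits the sum over particles first, and your remark on the norm convention hidden in $\mathrm{Lip}(\nabla\varphi)$ for $i)$ is a fair observation — the paper's own treatment of $i)$ is no more detailed than yours.
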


\begin{proof}[Proof of Proposition \ref{prop:bornes_techniques_J}] We consider any finite measure $\nu$.

\noindent 
    \underline{Proof of  $i)$} 
    We provide an upper bound on the Lipschitz constant of $\nabla J'_{\nu}$: consider $(\vt,\tilde{\vt}) \in \cX^2$, we repeat the same arguments as above and observe that:
    \begin{align*}
         |\nabla J'_{\nu}(\vt)- \nabla J'_{\nu}(\tilde{\vt})| \leq \left[ Lip(\nabla \varphi) \|\nu\|_{\mathrm{TV}} + Lip(\bm \nabla y)\right] |\vt-\tilde{\vt}|.
    \end{align*}
    \underline{Proof of $ii)$} Using a rough bound, we get
    \begin{eqnarray*}
    \| \nabla_{\bm t}J_\nu'(t)\|
    & = & \left\| \sum_{j=1}^p \omega_j \nabla_{\bm t} \langle \varphi_{\bm t},\varphi_{{\bm t}_j} \rangle_\mathbb{H} - \nabla_{\vt} \langle \varphi_\vt,y\rangle_\mathbb{H} \right\|, \\
    & \leq & \sum_{j=1}^p \omega_j \| \nabla_\vt \langle \varphi_\vt,\varphi_{\vt_j} \rangle_\mathbb{H}\|_\mathcal{X} + \| \nabla_\vt \langle \varphi_\vt,y\rangle_\mathbb{H} \|_\mathcal{X}, \\
    & \leq & \left( \|\nu\|_{\mathrm{TV}} \|\varphi\|_{\infty,\mathbb{H}} + \|y\|_\mathbb{H}\right) \times \sup_{\psi:\|\psi\|_\mathbb{H}\leq 1} \| \nabla_\vt \langle \varphi_\vt,\psi\rangle\|_\mathcal{X},
    \end{eqnarray*}
    which provides the desired result. \\
    \underline{Proof of $iii)$} Using Assumption \eqref{A2}, and in particular the boundedness of the derivative of $\bm g$ and $\bm h$, we get
    \begin{eqnarray*}
    \| D_\nu(\bm t,Z)\|_\mathcal{X}
    & = & \left\| \|\nu\|_{\mathrm{TV}} \nabla_\vt \bm g_{\vt,T}(U) - \nabla_\vt \bm h_\vt(V) \right\|_\mathcal{X}, \\
    & \leq & \|\nu\|_{\mathrm{TV}} \sup_{t,s,u} \| \nabla_t \bm g_{t,s}(u)\|_{\mathcal{X}} + \sup_{t,v} \|\nabla_t \bm h_t(v) \|_\mathcal{X}.
    \end{eqnarray*}
\underline{Proof of $iv)$} Similarly to item $iii)$, we have
\begin{eqnarray*}
    \| \nabla^2J_\nu'(\bm t)\|_{op}
    & = & \left\| \sum_{j=1}^p \omega_j \nabla^2 \langle \varphi_{\bm t},\varphi_{{\bm t}_j} \rangle_\mathbb{H} - \nabla^2 \langle \varphi_\vt,y\rangle_\mathbb{H} \right\|, \\
    & \leq & \sum_{j=1}^p \omega_j \| \nabla^2 \langle \varphi_\vt,\varphi_{\vt_j} \rangle_\mathbb{H}\|_{op} + \| \nabla^2 \langle \varphi_\vt,y\rangle_\mathbb{H} \|_{op}, \\
    & \leq & \left( \|\nu\|_{\mathrm{TV}} \|\varphi\|_{\infty,\mathbb{H}} + \|y\|_\mathbb{H}\right) \times \sup_{\psi:\|\psi\|_\mathbb{H}\leq 1} \| \nabla^2 \langle \varphi_\vt,\psi\rangle\|_{op},
    \end{eqnarray*}
\end{proof}

\begin{proposition}\label{prop:hoeffding}
    A large enough constant $\mathfrak{C}$ exists such that for any iteration $k\in \mathbb{N}$ and any particle $j\in \lbrace 1,\dots, p \rbrace$, :
    $$
\left|      \mathbb{E}  \left[  \alpha  J'_{\nuk}(\vt_j^k) + e^{-\alpha \widehat{\mJ'_{\nu_k}}(\vt_j^k)} -  1 \big\vert \mathfrak{F}_k \right] \right|  \leq \mathfrak{C} \alpha^2  (1+\|\nuk\|_{\mathrm{TV}})^2.
    $$
\end{proposition}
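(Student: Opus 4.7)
\textbf{Proof plan for Proposition \ref{prop:hoeffding}.}

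My plan is to use a second-order Taylor expansion of the exponential combined with the unbiasedness of $\widehat{\mJ'_{\nu_k}}(\vt_j^k)$ conditional on $\mathfrak{F}_k$, and then to apply the uniform bound from Lemma \ref{lem:boundJprime}. Concretely, let me write $X := \widehat{\mJ'_{\nu_k}}(\vt_j^k)$ for brevity. Since each $Z_\ell^{k+1}$ is independent of $\mathfrak{F}_k$ and conditionally on $\mathfrak{F}_k$ the increments $\xi_{\nu_k}(\vt_j^k,Z_\ell^{k+1})$ are centred (see \eqref{def:grad_sto_J}), the averaging identity \eqref{eq:minibatch} yields $\mathbb{E}[X\mid \mathfrak{F}_k] = J'_{\nu_k}(\vt_j^k)$.

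Next I would apply Taylor's formula with integral (or Lagrange) remainder to $u\mapsto e^{-u}$ at $u=0$, giving the exact identity
\[
e^{-\alpha X} - 1 + \alpha X \;=\; \frac{(\alpha X)^2}{2}\, e^{-\alpha \theta X},
\]
for some (random) $\theta \in [0,1]$. Taking conditional expectation and using the unbiasedness just recalled, this becomes
\[
\alpha J'_{\nu_k}(\vt_j^k) + \mathbb{E}\!\left[e^{-\alpha X}\,\big|\,\mathfrak{F}_k\right] - 1 \;=\; \frac{\alpha^2}{2}\,\mathbb{E}\!\left[X^2\, e^{-\alpha\theta X}\,\big|\,\mathfrak{F}_k\right].
\]

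The final step is to control the right-hand side using Lemma \ref{lem:boundJprime}, which provides the almost sure bound $|X|\leq \mathcal{C}_1(1+\|\nu_k\|_{\mathrm{TV}})$. Under the standing assumption $\alpha\,\mathcal{C}_1(R_0+1)<1$ of Theorem \ref{theo:local_sto} combined with Proposition \ref{prop:TV_nuk} (which ensures $\|\nu_k\|_{\mathrm{TV}}\le R_0$ almost surely), we get $|\alpha\theta X|\le 1$, so $e^{-\alpha\theta X}\le e$ almost surely. Therefore
\[
\left|\alpha J'_{\nu_k}(\vt_j^k) + \mathbb{E}[e^{-\alpha X}\mid \mathfrak{F}_k] - 1\right| \;\le\; \frac{e\,\alpha^2}{2}\,\mathcal{C}_1^2\,(1+\|\nu_k\|_{\mathrm{TV}})^2,
\]
which gives the stated inequality with $\mathfrak{C} := e\,\mathcal{C}_1^2/2$.

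There is no real obstacle here: the only point requiring some care is ensuring that the remainder of the Taylor expansion is genuinely of order $\alpha^2$ (and not $\alpha^2\,e^{\alpha|X|}$ with an uncontrolled exponential factor), which is exactly what the smallness assumption $\alpha\,\mathcal{C}_1(R_0+1)<1$ guarantees via Proposition \ref{prop:TV_nuk}. If one did not wish to invoke this smallness assumption, one would have to pay an extra multiplicative $e^{\alpha \mathcal{C}_1(1+\|\nu_k\|_{\mathrm{TV}})}$ in the final bound, which could be absorbed into $\mathfrak{C}$ only up to an exponential in $R_0$.
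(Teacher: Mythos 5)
Your proof is correct, and it takes a genuinely different (and arguably cleaner) route than the paper's. The paper first splits the quantity into a deterministic curvature term $\alpha J'_{\nu_k}(\vt_j^k)-1+e^{-\alpha J'_{\nu_k}(\vt_j^k)}$ plus a fluctuation term $e^{-\alpha J'_{\nu_k}(\vt_j^k)}\,\mathbb{E}\big[e^{-\alpha(\widehat{\mJ'_{\nu_k}}(\vt_j^k)-J'_{\nu_k}(\vt_j^k))}-1\,\big\vert\,\mathfrak{F}_k\big]$, controls the first by $|e^x-1-x|\leq c|x|^2$ and the second by Hoeffding's lemma applied to the centred bounded variable $\widehat{\mJ'_{\nu_k}}(\vt_j^k)-J'_{\nu_k}(\vt_j^k)$; you instead apply a single Taylor expansion with Lagrange remainder to $e^{-\alpha X}$ around $0$ and use the conditional unbiasedness $\mathbb{E}[X\mid\mathfrak{F}_k]=J'_{\nu_k}(\vt_j^k)$ to kill the first-order term, then bound the remainder via the almost-sure bound of Lemma \ref{lem:boundJprime}. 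Your argument is more elementary (no MGF bound needed) and yields an explicit constant; what the paper's decomposition buys in principle is that Hoeffding's lemma applied to the mini-batch average would give a fluctuation term of order $\alpha^2 T^2/m_k$, i.e., a variance-reduction factor $1/m_k$ on the stochastic part — but since the deterministic curvature term remains of order $\alpha^2(J')^2$ regardless of $m_k$, the paper does not actually exploit this, and the two proofs deliver the same final bound. Both approaches also need the same implicit smallness/boundedness of $\alpha\,|X|$ (the paper to keep the exponential prefactors bounded, you to bound $e^{-\alpha\theta X}\leq e$), and you are right to flag that without it one pays an extra factor exponential in $\alpha(1+R_0)$, which is exactly the $e^{\alpha\mathcal{C}_{\varphi,y}(\|\nu_k\|_{\mathrm{TV}}+1)}$ factor that appears in the paper's intermediate computations.
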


\begin{proof} This key technical argument relies on the Hoeffding inequality. We shall write:
    \begin{align*}
    \mathbb{E} \left[   \alpha  J'_{\nuk}(\vt_j^k) + e^{-\alpha \widehat{\mJ'_{\nu_k}}(\vt_j^k)} -  1   \big\vert \mathfrak{F}_k\right] & = 
    \alpha  J'_{\nuk}(\vt_j^k) - 1 + \mathbb{E}\left[e^{-\alpha \widehat{\mJ'_{\nu_k}}(\vt_j^k)}\big\vert \mathfrak{F}_k \right]  \\
    & = 
     \left[ \alpha  J'_{\nuk}(\vt_j^k) - 1 + e^{-\alpha  J'_{\nuk}(\vt_j^k)} \mathbb{E}\left[e^{-\alpha [\widehat{\mJ'_{\nu_k}}(\vt_j^k)- J'_{\nuk}(\vt_j^k)]}\big\vert \mathfrak{F}_k \right]\right] \\
    & =   \left[ \alpha  J'_{\nuk}(\vt_j^k) - 1 + e^{-\alpha  J'_{\nuk}(\vt_j^k)}\right]\\
    & +   e^{-\alpha  J'_{\nuk}(\vt_j^k)}
     \mathbb{E}\left[e^{-\alpha [\widehat{\mJ'_{\nu_k}}(\vt_j^k)- J'_{\nuk}(\vt_j^k)]} - 1\big\vert \mathfrak{F}_k \right]
\end{align*}
To derive an upper bound, we apply the Hoeffding Lemma to the random variable $J'_{\nuk}(\vt_j^k,Z^{k+1})- J'_{\nuk}(\vt_j^k)$ that is centered and bounded by $\bm T = \mathfrak{C} (1+\|\nuk\|_{\mathrm{TV}})$ from according to Lemma \ref{lem:boundJprime}. We obtain that:
\[
\left|
\mathbb{E}\left[e^{-\alpha [\widehat{\mJ'_{\nu_k}}(\vt_j^k))- J'_{\nuk}(\vt_j^k)]} - 1\big\vert \mathfrak{F}_k \right] \right|\leq e^{\frac{T^2 \alpha^2}{8}} - 1 \leq \mathfrak{C} \alpha^2 (1+\|\nuk\|_{\mathrm{TV}}^2).
\]
Using that $|e^{x}-1-x| \leq c |x|^2$ for bounded $x = \alpha J'_{\nuk}(\vt_j^k)$ and $c$ large enough, we finally obtain that:
\[\left| \mathbb{E}  \left[ \alpha  J'_{\nuk}(\vt_j^k) + e^{-\alpha \widehat{\mJ'_{\nu_k}}(\vt_j^k)} -  1   \big\vert \mathfrak{F}_k\right] \right| \leq \mathfrak{C} \alpha^2  (1+\|\nuk\|_{\mathrm{TV}}^2).\]

\end{proof}

We recall here the result essentially due to \cite{chizat2022sparse}, which is stated in a simplest way for our purpose.
\begin{proposition}
    \label{prop:approximation} Assume that $\mus$ is discrete and that $\nu_0$ is a uniform distribution over a grid of size $\delta= \frac{2 d}{\tau}$ where $d$ is the dimension of $\cX$, then:
    \[
    Q_{\tau}(\mus,\nu_0) \leq  
    \frac{\|\mus\|_{\mathrm{TV}} d}{\tau} \left(1+ \log\frac{\tau}{2d}  + \frac{\log |\cX|}{d}\right)     
    \]
    Moreover, the measure $\nu_0^\delta$ that meets this upper bound satisfies $\|\nu_0^{\delta}\|_{\mathrm{TV}}=\|\mus\|_{\mathrm{TV}}$.    
\end{proposition}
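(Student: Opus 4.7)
The plan is to exhibit an explicit competitor $\nu_0^\delta$ in the infimum defining $\mathcal{Q}_{\mu^\star,\nu_0}(\tau)$, realized as a convex combination of a \emph{nearest-grid snapping} of $\mu^\star$ and a small uniform background, and then to bound each of the two terms of the objective separately. Write $M:=\|\mu^\star\|_{\mathrm{TV}}$ and, by discreteness, $\mu^\star=\sum_{j=1}^{\bar s}\bar\omega_j\delta_{\bar t_j}$, and let $\mathcal{G}=\{g_i\}_{i=1}^p$ denote the grid with $p$ of order $|\cX|\,\delta^{-d}$. I will take $\nu_0$ uniform on $\mathcal{G}$, rescaled so that $\|\nu_0\|_{\mathrm{TV}}=M$, and denote by $g_{i_j}$ the grid point nearest to $\bar t_j$, which by definition satisfies $\|\bar t_j-g_{i_j}\|\lesssim\delta$.

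My competitor will be
\[
\nu_0^\delta \;:=\; (1-\eta)\sum_{j=1}^{\bar s}\bar\omega_j\,\delta_{g_{i_j}}\;+\;\eta\,\nu_0,\qquad \eta=1/p.
\]
Two features of this construction will be crucial. First, $\|\nu_0^\delta\|_{\mathrm{TV}}=(1-\eta)M+\eta M=M=\|\mu^\star\|_{\mathrm{TV}}$, which will yield the ``moreover'' assertion. Second, the Radon--Nikodym density $d\nu_0^\delta/d\nu_0$ is bounded below by $\eta$ on every grid point, which is precisely what is needed to keep the entropy $\mathcal{H}(\nu_0^\delta,\nu_0)$ finite.

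With this candidate, the $BL^\star$ estimate will follow from the triangle inequality and the $1$-Lipschitz part of the $BL$-norm: the snapping cost $\|\mu^\star-\mu^\sharp\|_{BL}^\star$ (with $\mu^\sharp:=\sum_j\bar\omega_j\delta_{g_{i_j}}$) is at most $M\delta$ up to a metric constant, while the background perturbation $\eta\|\mu^\sharp-\nu_0\|_{BL}^\star$ is absorbed into $2\eta M=2M/p$ via $\|\cdot\|_{BL}^\star\leq\|\cdot\|_{\mathrm{TV}}$ and is negligible. For the entropy I will use that $\|\nu_0^\delta\|_{\mathrm{TV}}=\|\nu_0\|_{\mathrm{TV}}$, so the definition \eqref{def:H} reduces to $\sum_i\nu_i\log(\nu_i/m_i)\leq M\log(1/\eta)=M\log p$. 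Putting these together and substituting $\delta=2d/\tau$ together with $\log p\leq\log|\cX|+d\log(\tau/(2d))$ yields the announced estimate up to absolute constants.

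The main obstacle lies in choosing the construction carefully enough to avoid infinite entropy while still paying only $O(M\delta)$ in $BL^\star$ distance. A naive snapping $\mu^\star\mapsto\sum_j\bar\omega_j\delta_{g_{i_j}}$ would be optimal in $BL^\star$ but would give $\mathcal{H}=+\infty$, since that nearest-grid measure fails to dominate $\nu_0$ on grid points not of the form $g_{i_j}$. Mixing in the small uniform background at level $\eta=1/p$ is the sweet spot: it tightens the density lower bound to match the unavoidable scale $M/p$ of $\nu_0$, so the entropy cost is of the minimal order $\log p$; decreasing $\eta$ further would blow up the entropy while increasing it would inflate the $BL^\star$ distance linearly. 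Balancing $M\delta$ against $M\log p/\tau$ is the standard $d$-dimensional calibration $\delta\asymp d/\tau$, and this is where the factor $d$ in the final bound arises.
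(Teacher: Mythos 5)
Your construction is correct and delivers the stated bound up to absolute constants, but it is genuinely different from the paper's. The paper's proof takes the \emph{pure} snapping $\nu_0^\delta=\sum_j\mus_j\delta_{x_{i_j}}$ as competitor and evaluates the entropy as $\sum_j\nu_0^\delta(x_{i_j})\log\big(\nu_0^\delta(x_{i_j})/\nu_0(x_{i_j})\big)$, i.e.\ by integrating the log-ratio against the \emph{competitor}; grid points carrying no mass of $\nu_0^\delta$ then contribute nothing, so no background is needed and the bound $-\mathrm{Ent}(\mus)+\|\mus\|_{\mathrm{TV}}\big[d\log(1/\delta)+\log|\cX|\big]$ follows directly. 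Your objection that the pure snapping yields $\mathcal{H}=+\infty$ is accurate under the \emph{literal} reading of \eqref{def:H} with the argument order of \eqref{def:Q} (the integral is taken against $\nu_0$, whose atoms off the snapped set see $\log 0$); the paper silently works with the reversed orientation, which is also the quantity $\mathcal{H}(\nu_0,\nu_0^\varepsilon)$ actually used in Step 3 of the proof of Theorem \ref{theo:global_sto}. Your mixture $(1-\eta)\mu^\sharp+\eta\nu_0$ with $\eta=1/p$ is therefore the more robust choice: it keeps the density ratio bounded on the whole grid, so the entropy is of order $\|\mus\|_{\mathrm{TV}}\log p$ under \emph{either} orientation, at the price of an extra $2\eta\|\mus\|_{\mathrm{TV}}=O(\|\mus\|_{\mathrm{TV}}\delta^{d}/|\cX|)$ in the $BL^\star$ term (indeed dominated by the $O(\|\mus\|_{\mathrm{TV}}\delta)$ snapping cost) and a rescaling of $\nu_0$ to total mass $\|\mus\|_{\mathrm{TV}}$, which slightly departs from the proposition's normalization of $\nu_0$ but is immaterial for how the result is used. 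Two small points to tighten: your displayed entropy reduction $\sum_i\nu_i\log(\nu_i/m_i)$ integrates against the competitor whereas the lower bound $d\nu_0^\delta/d\nu_0\ge\eta$ you invoke controls the integral against $\nu_0$ — both give $\|\mus\|_{\mathrm{TV}}\log p$, but you should commit to one orientation; and you only recover the bound up to constants, whereas the statement (and the paper's choice $\delta=2d/\tau$ with the $\delta/2$ snapping cost) pins down the constant $1$.
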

\begin{proof}
    We define $m$ as the size of the support of $\nu_0$ and \[\nu_0 = m^{-1} \sum_{i=1}^m \delta_{x_i},\] where $(x_i)_{1 \leq i \leq m}$ refers to the uniform grid of size $\delta$ on $\cX$.
    Since $\mus$ is  discrete, it may be written as: \[\mus=\sum_{j=1}^{m^\star} \mus_j \delta_{z_j^\star}.\]

\noindent For any support point $z_j^\star$ of $\mus$, we then consider $i_j \in \lbrace 1,ots, m\rbrace$ such that $\|x_{i_j}-z_j\| \leq \delta/2$ and we define $\nu_0^\delta$ as:
\[
\nu_0^\delta := \sum_{j=1}^{m^\star} \mus_j \delta_{x_{i_j}}
\]
We observe that by construction, $\|\nu_0^\delta\|_{\mathrm{TV}} = \|\mus\|_{\mathrm{TV}}$ and
\begin{align}
    \mathcal{H}(\nu_0,\nu_0^{\delta}) & = \sum_{j=1}^{m^\star} \nu_0^{\delta}(x_{i_j)} \log \left( \frac{ \nu_0^{\delta}(x_{i_j)}}{\nu_0(x_{i_j})}\right) \nonumber\\ 
    &=\sum_{j=1}^{m^\star} \mus_j  \log \left( \frac{ \mus_j}{\nu_0(x_{i_j})}\right)\nonumber\\
    &\leq - \mathrm{Ent}(\mus) + \|\mus\|_{\mathrm{TV}} \left[ d \log \left(\frac{1}{\delta}\right) + \log |\cX|\right], \label{eq:up_H}
\end{align}
where we used   the entropy of a discrete measure defined as 
$$\mathrm{Ent}(\mu)= - \sum_{x \in Supp(\mu)} \mu(x) \log(\mu(x))$$ 
and a lower bound of $\nu_0(x_{i_j})$, which is of the order $\delta^{d} |\cX|^{-1}$ where $|\cX|$ refers to the Lebesgue measure of $\cX$.

In the meantime, we also observe that the BL dual norm between $\nu_0^{\delta}$ and $\mus$ can be easily upper bounded. Indeed
\begin{align}
    \|\nu_0^{\delta}-\mus\|_{BL}^* & = \sup_{\|f\|_{BL} \leq 1} \int_{\cX} f \text{d}[\nu_0^{\delta}-\mus]\nonumber\\
    & =\sup_{\|f\|_{BL} \leq 1} \sum_{j=1}^{m^\star} \mus_j [f(x_{i_j})-f(z_j)] \nonumber\\
    & \leq \frac{\|\mus\|_{\mathrm{TV}} \delta }{2}\label{eq:up_BL}
    \end{align}
We then add the two upper bounds \eqref{eq:up_H} and \eqref{eq:up_BL} and minimize
\[
\delta \longmapsto \frac{\|\mus\|_{\mathrm{TV}} \delta }{2} + \frac{1}{\tau} \left(- \mathrm{Ent}(\mus) + \|\mus\|_{\mathrm{TV}} \left[ d \log \left(\frac{1}{\delta}\right) + \log |\cX|\right]\right).\]
\noindent 
We are led to choose $\delta= 2 d \tau^{-1}$ and we obtain the following upper bound:
\[
\frac{1}{\tau}\mathcal{H}(\nu_0,\nu_0^{\delta}) + \|\nu_0^{\delta}-\mus\|_{BL}^* \leq  \frac{\|\mus\|_{\mathrm{TV}} d}{\tau} \left(1+ \log\frac{\tau}{2d}  + \frac{\log |\cX|}{d}\right),
\]
which ends the proof of the proposition.\end{proof}

\end{document}